\newtheorem{theorem}{Theorem}
\newtheorem{proposition}{Proposition}
\newtheorem{corollary}{Corollary}
\newtheorem{definition}{Definition}
\newtheorem{lemma}{Lemma}
\newcommand{\R}{{\mathbb R}}
\newcommand{\Z}{{\mathbb Z}}
\newcommand{\set}[2]{ \left\{ #1 \ \left| \ #2 \right. \right\}}
\newcommand{\mstar}{m_*}
\newcommand{\domain}{\Omega}
\newcommand{\basis}{\mathbf{e}}
\newcommand{\vecspan}{\mathop{\operatorname{span}}}
\newcommand{\dist}{\operatorname{dist}}
\newcommand{\SL}{\operatorname{SL}}
\newcommand{\GL}{\operatorname{GL}}
\title{Generalized Sublevel Estimates for Form-Valued Functions and Related Results for Radon-like Transforms}
\author{Philip T. Gressman\footnote{Partially supported by NSF grants DMS-2054602 and DMS-2348384.}}
\date{\today}
\begin{document}
\maketitle

\begin{abstract}
Motivated by the testing condition for Radon-Brascamp-Lieb multilinear functionals established in \cite{testingcond}, this paper is concerned with identifying local conditions on smooth maps $u(t)$ with values in the space of decomposable $p$-forms on some real vector space $V$ which guarantee uniform integrability of $||u(t)||^{-\tau}$ over a certain natural, noncompact family of norms. One can loosely regard this problem as a higher-dimensional analogue of establishing uniform bounds for the size of a sublevel set of a function in terms of the size of its derivatives. The resulting theorem relies extensively on ideas from Geometric Invariant Theory to understand what appropriate derivative bounds look like in this context. Several examples and applications are presented, including a new local characterization of so-called ``model'' Radon-like transforms in terms of the semistability of a natural curvature functional (giving an equivalent but rather different criterion than the one first established in \cite{gressmancurvature}).
\end{abstract}

\tableofcontents

\section{Introduction}
\label{introduction}

Suppose $\domain \subset \R^d$ is open and that $V$ is an $q$-dimensional real vector space which is equipped with a nonzero alternating $q$-linear form called its determinant functional. Given smooth functions $u^1,\ldots,u^{p}$ from $\domain$ into $V^*$ with $1 \leq p \leq q$ and linearly-independent vectors $\{\omega^j\}_{j=1}^q$ in $V$, one can define an associated norm for $p$-forms $u^1 \wedge \cdots \wedge u^p$ with respect to the basis $\{\omega^j\}_{j=1}^q$ by means of the formula
\begin{equation} ||u(t)||_{\{\omega^j\}_{j=1}^q}^2 := \frac{1}{p!} \sum_{j_1=1}^q \cdots \sum_{j_p=1}^q \left| \det \begin{bmatrix} u^1(t) \cdot \omega^{j_1} & \cdots & u^1(t) \cdot \omega^{j_p} \\ \vdots & \ddots & \vdots \\ u^p(t) \cdot \omega^{j_1} & \cdots & u^p(t) \cdot \omega^{j_p} \end{bmatrix} \right|^2, 
\label{normdef} \end{equation}
where $\cdot$ in this case indicates the natural pairing between elements of $V^*$ and $V$. When no confusion arises, the shortened notation $||u(t)||_{\omega}$ will be used to indicate $||u(t)||_{\{\omega^j\}_{j=1}^q}$.

Motivated by \cite{testingcond}, the goal of the present paper is to identify local or geometric conditions on $u(t)$ under which there exists a nonnegative weight function $w$ on $U$ and some exponent $\tau > 0$ such that the integral
\begin{equation} \int_{\domain} \frac{w(t) dt}{\left[ || u(t)||_{\omega} \right]^\tau} \label{theintegral} \end{equation}
is uniformly bounded above for all choices of $\{\omega^j\}_{j=1}^q$ which span a parallelepiped of volume one, i.e., $|\det (\omega^1,\ldots,\omega^q)| = 1$, where $\det$ is the given determinant functional on $V$. As such, this paper is aligned with many other previous results in various settings which seek to make precise the informal observation that smooth objects with large derivatives cannot be globally small in magnitude. While many robust results of this sort exist in one dimension (including Arhipov, Karacuba, and \v{C}ubarikov \cite{akc1979}; Phong, Stein, and Sturm \cite{pss1999}; Rogers \cite{rogers2005}; and Stein and Wainger \cite{sw2001}), working in higher dimensions presents a host of new challenges. A variety of interesting and important approaches have been developed to address these challenges. The works of Carbery, Christ, and Wright \cite{ccw1999}; Carbery and Wright \cite{cw2002}; Greenblatt \cite{greenblatt2006}; and Phong, Stein, and Sturm \cite{pss2001} are particularly noteworthy and showcase very different techniques and ways of thinking about the problem.

One lesson learned very early is that, when proving general sublevel set inequalities, imposing lower bounds on several mixed partial derivatives of a function yields results which are no better than what is obtained by imposing each individual constraint in isolation (see the concluding remarks of Phong, Stein, and Sturm \cite{pss2001} for an example). What this appears to mean in practice is that one must develop a more nuanced and geometric understanding of how partial derivatives relate to one another in order to robustly quantify the behavior of smooth functions in higher dimensions. An example of such an approach is the result of Carbery \cite{carbery2009}, which establishes uniform sublevel set bounds for convex functions under the condition that their Hessian determinant is bounded below; related results are of use in Monge-Amp\`{e}re theory \cite{lmt2017}. The Hessian determinant is a particularly natural differential operator to use when studying functions of several variables because it is invariant under all linear coordinate changes which preserve volume. This particularly large symmetry group may be one reason why lower bounds for the Hessian determinant imply uniform sublevel set estimates in some natural cases despite the fact that lower bounds for the Laplacian do not (see, e.g., Carbery, Christ, and Wright \cite{ccw1999} or Steinerberger \cite{steinerberger2021}).

Rather than identifying or constructing nonlinear polynomials like the Hessian determinant which govern the behavior of the integral \eqref{theintegral}, the approach to be taken here is somewhat different and is heavily influenced by Geometric Invariant Theory and real Kempf-Ness theory specifically \cites{kn1979,bl2021}. Roughly speaking, one looks at the Taylor series jet of the function when expressed in essentially arbitrary coordinates and requires that the jets are always quantitatively nontrivial. In practice, this means imposing lower bounds on the norm of the jet as it ranges over the noncompact (but still linear) group of coordinate transformations that naturally arise in the problem. An approach of this sort was successful in characterizing Oberlin's affine Hausdorff measure \cite{gressman2019}, and it turns out in the current context to again be a quite successful and robust way to use local differentiability properties of $u(t)$ to establish uniform bounds on the integrals \eqref{theintegral}.

\subsection{Main Result}

The main quantitative result revolves around a sort of generalized row and column reduction which can be applied to the form $u(t)$ to progressively eliminate low-order terms in Taylor series jets. The outcome of such a process is most succinctly described in terms of decreasing families of subspaces and a differential condition which applies to pairings of elements in these subspaces. The precise structure is described in the following sequence of definitions.
\begin{definition}
For each $t \in \domain$, let $W(t)$ be some $r$-dimensional subspace of $V$. This $W(t)$ is called a smooth family of subspaces of $V$ when for every $t_0 \in \domain$, there is a neighborhood $\domain_0 \subset \Omega$ of $t_0$ and a smooth nonvanishing $w_0(t)$ defined on $\domain_0$ with values in the exterior algebra $\Lambda^r(V)$ such that at all $t \in \domain_0$, there exist linearly independent $w^1,\ldots,w^r \in W(t)$ for which $w^1 \wedge \cdots \wedge w^r = w_0(t)$. Any such pair $(\domain_0,w_0)$ is called a local parametrization of $W(t)$.
\end{definition}

Note in the above definition that the $r$-form $w_0(t)$ is assumed to be smooth but it is not assumed \textit{a priori} that the individual vectors $w^1,\ldots,w^r$ possess regularity of any kind in the parameter $t$.

\begin{definition}
Given families of smooth subspaces $V \supset W_0(t) \supset W_1(t) \supset \cdots \supset W_{m}(t) \supset W_{m+1}(t) := \{0\}$ for $t \in \domain$ and some smooth $w(t)$ on $\domain$ with values in $\Lambda^r(V)$ such that $(\domain,w)$ is a local parametrization of $W_0(t)$, we say that $V$-valued functions $\{\overline{w}^{i,i'}(t)\}$ for $i \in \{0,\ldots,m\}, i' \in \{1,\ldots,\dim W_i - \dim W_{i+1}\}$ are an adapted factorization of $w(t)$ when the following conditions hold:
\begin{itemize}
\item For each $t \in \domain$,
\[ \bigwedge_{i=0}^{m} \left[ \bigwedge_{i'=1}^{\dim W_i - \dim W_{i+1}} \overline{w}^{i,i'}(t) \right]  = w(t). \]
\item For each $i \in \{0,\ldots,m\}$, the vectors $\{\overline{w}^{i,i'}(t)\}_{i'=1}^{\dim W_i - dim W_{i+1}}$ belong to $W_i(t)$ and are linearly independent modulo $W_{i+1}(t)$.
\end{itemize}
\end{definition}

As an important side note, for any real vector field $V$ of dimension $q$ which comes equipped with a determinant functional, it is always the case that $v^1 \wedge \cdots \wedge v^q = w^1 \wedge \cdots \wedge w^q$ when $\det (v^1,\ldots,v^q) = \det (w^1,\ldots,w^q) = 1$. A collection $\{\overline{v}^{j,j'}\}_{j,j'}$ will be called an adapted factorization of $V$ when it is an adapted factorization of $v^1 \wedge \cdots \wedge v^q$ for any volume one basis $v^1,\ldots,v^q$ (where $m=0$ and the $v^i$ are regarded as constant functions of $t$).

\begin{definition}
Let $U_0(t)$ be a smooth family of $p$-dimensional subspaces of $V^*$ for $t \in \domain$. Suppose there exist smooth families of subspaces $U_0(t) \supset \cdots \supset U_{\mstar}(t) \supset U_{\mstar+1}(t) := \{0\}$ and $V =: V_0(t) \supset \cdots \supset V_{m}(t) \supset V_{m+1}(t) := \{0\}$ and nonnegative integers $D_{ij}$ for $i \in \{0,\ldots,\mstar\},j \in \{0,\ldots,m\}$, such that for each $t_0$, there is a neighborhood $\domain_0$ of $t_0$ and local parametrizations $(\domain_0,u_i)$ and $(\domain_0,v_j)$ of $U_i(t)$ and $V_j(t)$, respectively, such that for all $t \in \domain_0$,
\begin{equation} \overline{u} \cdot (\partial^\gamma_t v_j(t)) = 0 = (\partial^\gamma_t u_i(t)) \cdot \overline{v} \label{reductiondef} \end{equation}
when $\overline{u} \in U_i(t)$, $\overline{v} \in V_j(t)$, and $\gamma$ is any multiindex such that the order $|\gamma|$ is strictly less than $D_{ij}$.
 Here $\cdot$ indicates the interior product.
 Such a family of nested subspaces is called a block decomposition of $(U_0,V)$. The integers $D_{ij}$ will be called formal degrees of pairings of $U_i(t)$ and $V_j(t)$.
 \end{definition}
 It will be assumed throughout that $D_{ij}$ is nondecreasing as a function of $i$ and $j$ individually (i.e., nondecreasing in $i$ for each fixed $j$ and vice-versa). It will also be assumed without loss of generality that the subspaces $U_i(s)$ and $V_j(t)$ are strictly decreasing in dimension as a function of $i$ and $j$. Given a block decomposition, it will be standard to define $p_i := \dim U_{i} - \dim U_{i+1}$ for each $i \in \{0,\ldots,\mstar\}$, and $q_j := \dim V_j - \dim V_{j+1}$ for $j \in \{0,\ldots,m\}$.

For a simple example of a block decomposition, consider the vector space $V = \R^5$, regarded as column vectors, with dual space $V^* = \R^5$, regarded as row vectors. Given the matrix
\begin{equation} \begin{bmatrix} 1 & 0 & 0 & s & 0  \\ 0 & 1 & 0 & 0 & s \\  -s & -\frac{s^2}{2} & 1 & -\frac{s^2}{2} & -\frac{s^3}{3} \end{bmatrix}  \label{matrixU} \end{equation}(where $s$ here is a real parameter), let $U_0(s)$ be the span of all three rows (with local parametrization $u_0(s)$ given by the wedge of the three rows) and let $U_1(s)$ be the span of the final row. Likewise, given the matrix 
\begin{equation} \begin{bmatrix} 1 & 0 & 0 & -t & t^2 \\ 0 & 1 & 0 & 0 & -t \\ 0 & 0 & 1 & - \frac{t^2}{2} & \frac{t^3}{3} \\ 0 & 0 & 0 & 1 & -t \\ 0 & 0 & 0 & 0 & 1 \end{bmatrix},  
\label{matrixV} \end{equation}
let $V_0(t) = \R^5$, $V_1(t)$ be the span of the two rightmost columns, and let $V_2(t)$ be the span of the final rightmost column. The product of matrices \eqref{matrixU} and \eqref{matrixV} is
\[ \left[\begin{array}{ccc|c|c} 1 & 0 & 0 & s-t & t(t-s) \\ 0 & 1 & 0 & 0 & s-t \\ \hline -s & -\frac{s^2}{2} & 1 & - \frac{(s-t)^2}{2} & - \frac{(s-t)^3}{3} \end{array} \right]. \]
Here the vertical lines mark the transitions from $V_0$ to $V_1$ and $V_1$ to $V_2$, respectively, while the horizontal line marks the transition from $U_0$ to $U_1$. The computations from Proposition \ref{blockprop} in Section \ref{adaptsec} allow one to read off viable formal degrees by simply finding the order of vanishing along the diagonal $t=s$ of each block of this matrix. In this case, $U_0,U_1$ and $V_0,V_1,V_2$ form a block decomposition of $(U_0,V)$ with formal degrees $D_{00} = D_{10} = 0$, $D_{01} = D_{02} = 1$, $D_{11} = 2$, and $D_{12} = 3$. Block decompositions can informally be regarded as a bookkeeping system for recording the rates at which various key subspaces of $U_0(s)$ vary as $s$ varies and the directions in which they vary. This, of course, does not yet illuminate how to \textit{construct} a block decomposition for a given $(U_0,V)$, but Section \ref{examplesec} introduces an algorithm involving joint row and column reduction of a matrix like \eqref{matrixU} which generically produces such a block decomposition.

One final piece of necessary information concerns the regularity constraints imposed on $u$. Throughout most of this paper, it will be assumed that $u$ exhibits Nash regularity. More details are available in Section \ref{nash}; roughly speaking, this means that $u$ belongs to a class of real analytic functions which contains and extends polynomials. Nash functions are equipped with an integer $K$ called complexity which generalizes the notion of degree. To simplify formulas somewhat, when a function or other object is described as having complexity at most $K$, it will always be assumed that $K \geq 2$. Because the dependence of constants on the Nash complexity parameter $K$ has important implications (allowing the passage from Nash to smooth Radon-like transforms in \cite{gressmancurvature}), this dependence will be tracked throughout the paper.

With the definitions in place, the main result of this paper is as follows.

\begin{theorem}
Let $\domain \subset \R^d$ be open and $u^1(t),\ldots,u^p(t)$ on $\domain$ be smooth $V^*$-valued functions such that $u(t) := u^1(t) \wedge \cdots \wedge u^p(t)$ is nonvanishing and is Nash of complexity at most $K$.  Let $U_0(t)$ be the family of subspaces parametrized by $(\domain,u)$ and suppose that $(U_0,V)$ admits a block decomposition with subspaces $U_0(t) \supset \cdots \supset U_{\mstar}(t) \supset \{0\}$ and $V =: V_0(t) \supset \cdots \supset V_m(t) \supset \{0\}$ with formal degrees $D_{ij}$. \label{bigtheorem1} 

Suppose there exists a nonnegative measurable function $w(t)$ on $\domain$ and some $\sigma > 0$ such that at every $t \in \Omega$, all adapted factorizations $\{\overline{u}^{i,i'}\}_{i,i'}$ and $\{\overline{v}^{j,j'}\}_{j,j'}$ of $u(t)$ and $V$, respectively, satisfy the inequality
\begin{equation} |\det M|^{\sigma} (w(t))^\sigma \leq \left( \sum_{i=0}^{\mstar} \sum_{i'=1}^{p_i} \sum_{j=0}^m \sum_{j'=1}^{q_j} \sum_{|\alpha| = D_{ij}} \frac{ \left|\overline{u}^{i,i'}(t) \cdot [ (M \partial)^\alpha \overline{v}^{j,j'}(t)] \right|^2 }{\alpha!} \right)^{\frac{1}{2}} \label{nondegenhyp} \end{equation}
for any $d \times d$ invertible matrix $M$ (see \eqref{matchangedef} for the definition of $M \partial$; here $\alpha$ is a multiindex in $\mathbb{Z}_{\geq 0}^d$).
Then there exists a constant $C$ depending only on $p,q,d$, and the $D_{ij}$ such that
\[ \int_\domain \frac{w(t) dt}{\left[ ||u(t)||_{\omega}\right]^{1/(p \sigma)}} \leq C K^{C} \]
for every choice of basis $\omega^1,\ldots,\omega^q$ of $V$ satisfying $|\det(\omega^1,\ldots,\omega^q)| = 1$.
\end{theorem}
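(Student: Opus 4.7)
The plan is to combine three ingredients: (i) a Kempf--Ness style selection of an optimal test matrix $M(t,\omega)$ for each basis $\omega$ and each $t \in \domain$, (ii) Taylor-expansion bounds that convert the derivative lower bound \eqref{nondegenhyp} into a pointwise lower bound for $||u(t)||_\omega$, and (iii) a semi-algebraic covering argument that controls the integral with only polynomial dependence on the Nash complexity $K$.

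First I would localize. By Nash regularity the domain can be partitioned into $O(K^C)$ semi-algebraic cells on each of which the block decomposition is smoothly trivializable, so it suffices to bound the integral on a single cell by something depending only on $p,q,d$ and the $D_{ij}$ and then sum. On such a cell I would fix local parametrizations $u(t)$ and $v_j(t)$ realizing the subspaces $U_i(t)$ and $V_j(t)$, and select adapted factorizations $\{\overline{u}^{i,i'}(t)\}$, $\{\overline{v}^{j,j'}(t)\}$ that depend on $t$ in a controlled way.

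Next, for a fixed basis $\omega$ with $|\det(\omega^1,\dots,\omega^q)|=1$, the definition \eqref{normdef} shows that $||u(t)||_\omega^2$ is (up to combinatorial constants) the sum of squared pairings $|\overline{u}^{i,i'}(t)\cdot(\omega^{j_1}\wedge\cdots\wedge\omega^{j_p})|^2$. The key construction is a matrix $M=M(t,\omega)$ with the following two properties. On the one hand, its action on the basis $\omega$ produces, after Taylor-expanding $\overline{v}^{j,j'}(t')$ around $t'=t$ to order $D_{ij}-1$, a collection of pairings dominating those that define $||u(t)||_\omega$; this uses precisely the vanishing conditions \eqref{reductiondef} to cancel the low-order terms so that the leading Taylor term in each $(i,j)$ block is exactly of order $D_{ij}$. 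On the other hand, $|\det M|$ can be read off from the block sizes $p_i,q_j$ and the scales at which the pairings live. Inserting this $M$ into \eqref{nondegenhyp} yields, after the identification with the Taylor coefficients, a pointwise inequality of the schematic form
\[ |\det M|^\sigma w(t)^\sigma \lesssim ||u(t)||_\omega^{\sigma'} \]
with $\sigma' = 1/p$ once one accounts for the fact that $u$ is a $p$-form and that the adapted factorization contains $p$ factors. Rearranging gives $w(t) \lesssim ||u(t)||_\omega^{1/(p\sigma)} / |\det M|$.

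The final step is to integrate. A dyadic decomposition in the size of $||u(t)||_\omega$, combined with a Vitali-type covering of $\domain$ by semi-algebraic sets on which $|\det M(t,\omega)|$ is essentially constant, converts the pointwise inequality into the claimed integral bound; the number of relevant scales and covering pieces is polynomial in $K$ by standard Nash theory, producing the $K^C$ dependence. I expect the main obstacle to be the construction of the test matrix $M(t,\omega)$: it is essentially a real semistability argument in the spirit of Kempf--Ness, and keeping the weights honest across blocks with different formal degrees $D_{ij}$, while ensuring that $|\det M|$ exactly balances the contributions of the $\omega$-pairings block by block, is the most delicate piece of bookkeeping. Once that matrix is in hand, the remainder of the argument is routine Taylor analysis combined with the semi-algebraic covering machinery made available by the Nash hypothesis.
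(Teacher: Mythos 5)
There is a genuine gap at the heart of your argument, namely the step where you assert that, after choosing a good test matrix $M(t,\omega)$, one obtains a \emph{pointwise} inequality of the form $|\det M|^\sigma w(t)^\sigma \lesssim ||u(t)||_\omega^{1/p}$. No Taylor argument gives this. The right-hand side of \eqref{nondegenhyp} involves derivatives of the adapted factorization $\overline{v}^{j,j'}$ (which depend on $\omega$) paired against $\overline{u}^{i,i'}$, while $||u(t)||_\omega$ is the pointwise value of the undifferentiated pairings \eqref{normdef}. At a single point $t$, knowing that $||u(t)||_\omega$ is small tells you nothing about the derivatives entering the right-hand side of \eqref{nondegenhyp}---compare $u(t)=t^2$ at $t=\epsilon$, whose second derivative is $O(1)$ while the function is $\epsilon^2$. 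Your cancellation remark (``the leading Taylor term in each $(i,j)$ block is exactly of order $D_{ij}$'') is a correct consequence of \eqref{reductiondef}, but it prescribes the \emph{order of vanishing on the diagonal}, not the \emph{magnitude} of the surviving top-order term, which is what would have to be bounded by $||u(t)||_\omega^{1/p}$. In the paper this is exactly where the geometric differential inequality machinery enters (Theorem 3 of \cite{gressmancurvature}, recast here as Lemma~3): rather than a pointwise bound, one constructs, on a compact subset $E_N$ of measure comparable to $E$, families of vector fields $X^{(N)}_i$ such that the quantities $X^\alpha\overline v^{j,j'}$ paired against $u^i$ are uniformly controlled by $\sup_E$ of the original pairings, which is $\leq ||u(t)||_\omega$ by \eqref{trivialbound}; one also gets the crucial lower bound $W(t)|\det(X^{(N)}_1,\dots,X^{(N)}_d)|\geq c\,K^{-C}W(E)$ from \eqref{nondegenvecs99}. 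The matrix $M$ used in \eqref{nondegenhyp} is then read off from these vector fields, and the loop closes because $W(t)|\det M|$ is already bounded below in terms of the quantity $W(E)$ one is trying to bound.

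A second, related problem is in your integration step. Even granting the schematic pointwise bound, you would be left with $w(t)/||u(t)||_\omega^{1/(p\sigma)}\lesssim 1/|\det M(t,\omega)|$, and you then need $\int dt/|\det M(t,\omega)|$ to be uniformly bounded. Nothing in a Vitali-covering/dyadic argument prevents $|\det M(t,\omega)|$ from being arbitrarily small on large sets. The paper avoids this by never optimizing $M$ over all matrices: $M$ is the specific one coming from the vector field family, and its determinant is linked a priori to the weight one is integrating via \eqref{nondegenvecs99}. Finally, the source of the $K^C$ factor is not a semi-algebraic cell count: it comes from the explicit $K^{-(2d+2)^N}$ factor in the vector field lower bound \eqref{nondegenvecs}, propagated through the iterative construction. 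So the Nash hypothesis is used quantitatively in the vector-field engine, not in a covering-by-cells argument. As written, your outline is missing the key lemma (the measure-theoretic sup-to-derivative transfer) and would not close.
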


When viewing Theorem \ref{bigtheorem1} as a computational tool, the challenge is, of course, to determine whether any nontrivial weight function $w$ exists satisfying \eqref{nondegenhyp}. Meeting the challenge can indeed be difficult in some cases. To ease the burden in this regard, Section \ref{usefulsec} gives a number of secondary technical results directed at understanding the condition \eqref{nondegenhyp}. In particular, Theorem \ref{usefulthm} relates the inequality \eqref{nondegenhyp} to the tools and results of Geometric Invariant Theory to give a number of ways to verify that the hypothesis does indeed hold. Theorem \ref{usefulradon} applies these results to the general setting of Radon-like transforms to establish a class of uniform $L^p$-improving estimates for Radon-like transforms equipped with measures which generalize affine arc length and affine hypersurface measure.

\subsection{Principal Application: Model Radon-Like Transforms}

Theorem \ref{bigtheorem1} has important implications Radon-like transforms thanks to the testing condition established in \cite{testingcond}. The result is that it is possible to obtain a second (and independent) local criterion which is both necessary and sufficient for a Radon-like transform to be a so-called ``model operator,'' meaning that it exhibits best-possible $L^p$-improving properties. Establishing boundedness of model operators and understanding the underlying curvature condition has been a topic of interest for some time (see, for example, Ricci \cite{ricci1997} and Oberlin \cite{oberlin2008}). In particular, with Theorem \ref{bigtheorem1}, it is possible to give a more succinct and natural description of the necessary curvature condition than was possible in \cite{gressmancurvature}.

The key condition will be formulated for Radon-like transforms written in double fibration form:
\begin{equation} \int_{\mathcal M} g(\pi_2(z)) f(\pi_1(z)) a(z) d \sigma(z). \label{doublefibration} \end{equation}
Here $f$ is a function on $\R^{n_1}$, $g$ is a function on $\R^{n}$, $\mathcal M$ is some manifold of dimension $n_1 + n -k$ (where it is assumed that the codimension $k$ is less than $\min\{n,n_1\}$), $\sigma$ is some measure of smooth density on $\mathcal M$, and $a$ is some continuous function of compact support in $\mathcal M$. The maps $d \pi_1$ and $d \pi_2$ are assumed to have surjective differentials on a neighborhood of the support of $a$. Let $X^1,\ldots,X^{n-k}$ be smooth vector fields spanning the kernel of $d \pi_1$ near some point $z_0 \in \mathcal M$ in the support of $a$ and let $Y^1,\ldots,Y^{n_1-k}$ be smooth vector fields spanning the kernel of $d \pi_2$ near $z_0$. Let $X$ be any smooth linear combination of the $X^i$ and let $Y$ be any smooth linear combination of the $Y^i$. The map
\begin{equation} (X,Y) \mapsto [X,Y] \label{theqmap} \end{equation}
(where $[\cdot,\cdot]$ denotes the Lie bracket) sends $X$ and $Y$ to a new vector field on $\mathcal M$. Modulo $\ker d \pi_1 + \ker d \pi_2$, the value of $[X,Y]$ depends only on the pointwise values of $X$ and $Y$ since if $X = X'$ at a point $z_0$, then
\[ X - X' = \sum_{i=1}^{n_1-k} c_i(z) X^{i} \]
for smooth functions $c_i$ vanishing at $z=z_0$, which forces
\[ [X - X',Y] = \sum_{i=1}^{n_1-k} c_i(z) [X^{i},Y] + \sum_{i=1}^{n_1-k} (Y c_i(z)) X^i; \]
the first sum must vanish at $z = z_0$, meaning that $[X-X',Y] \in \ker d \pi_{1}$ at $z_0$. An analogous argument establishes that $[X,Y'-Y] \in \ker d \pi_2$ when $Y'$ and $Y$ are vector fields in the kernel of $d \pi_2$ which agree at $z_0$. Thus at every point $z \in \mathcal M$, there is a canonical bilinear map:
\begin{equation} Q_{z} : \ker d \pi_1|_z \times \ker d \pi_2|_z \rightarrow T_{z}(\mathcal{M}) / (\ker d \pi_1 |_z + \ker d \pi_2 |_z) \label{qdescribed} \end{equation}
given by \eqref{theqmap}. 
The main result of \cite{gressmancurvature} was a local condition on this map $Q_z$ phrased in terms of an associated convex polytype reminiscent of the Newton polytope. An interesting and important consequence of Theorem \ref{bigtheorem1} is that it allows one to understand model operators in terms of semistability, which is a concept already familiar in the context of Geometric Invariant Theory.

\begin{definition} Fix any norm $|| \cdot ||$ on the vector space of bilinear maps to which $Q_z$ belongs. We say that $Q_z$ is semistable when there is some positive constant $c$ such that every map of the form
$(X,Y) \mapsto C Q_z(A X, B Y)$ has norm at least $c$ for all linear operators $A, B,$ and $C$ on $\ker d \pi_1 |_z$, $\ker d \pi_2 |_z$ and $T_{z}(\mathcal{M}) / (\ker d \pi_1 |_z + \ker d \pi_2 |_z)$, respectively, which have determinant $1$.
\end{definition}

In Section \ref{neccsec}, it is established that \eqref{doublefibration} cannot give rise to a model operator when the kernels of $d \pi_1$ and $d \pi_2$ fail to be transverse. One can thus assume without loss of generality that the map $z \mapsto (\pi_2(z), \pi_1(z))$ is locally an embedding. This implies (reordering the coordinates of $\pi_1(z)$ if necessary) the existence of coordinates $z(x,t)$ on $\mathcal M$ defined near $z_0 \in \mathcal M$ such that $x \in \R^n$, $t \in \R^{n_1-k}$, and $(\pi_2(z(x,t)),\pi_1(z(x,t))) = (x,(t,\phi(x,t)))$ for some smooth $\phi(x,t)$. We say that \eqref{doublefibration} is Nash of complexity at most $K$ when $x \mapsto \phi(x,t)$ is Nash with complexity at most $K$ for each $t$ and $t \mapsto \frac{\partial \phi}{\partial x}(x,t)$ is Nash with complexity at most $K$ for each $x$. If, for example, $\phi$ is a polynomial, then the regularity condition will be satisfied by simply taking $K$ to be its degree.

\begin{theorem}
Fix  $z_0 \in \mathcal{M}$. Suppose that near $z_0$, \eqref{doublefibration} has $\ker d \pi_1 \cap \ker d \pi_2 = \{0\}$ and it admits $\phi(x,t)$ as just described. Then there exists a smooth function $a$ defined on a neighborhood of $z_0$ in $\mathcal M$ with $a(z_0) \neq 0$ such that \label{characterize}
\begin{equation} \left| \int_{\mathcal M} g(\pi_2(z)) f(\pi_1(z)) a(z) d \sigma(z) \right| \leq C ||g||_{L^{\frac{k(n_1-k)}{n_1(n-k)}+1}} ||f||_{L^{\frac{k (n-k)}{n(n_1-k)}+1}} \label{thebnd} \end{equation}
for some finite constant $C$ and all $f$ and $g$ in their respective Lebesgue spaces if and only if $Q_{z_0}$ is semistable. The constant $C$ grows at most like a fixed power (depending only on dimensions) of the Nash complexity $K$.
\end{theorem}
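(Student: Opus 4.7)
The plan is to establish the two directions separately, with the ``only if'' direction following a standard destabilization argument from Geometric Invariant Theory and the ``if'' direction reducing, via the testing condition of \cite{testingcond}, to an application of Theorem \ref{bigtheorem1}.

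For the ``only if'' direction, suppose $Q_{z_0}$ is not semistable. By the Hilbert--Mumford criterion there is a one-parameter family of triples $(A_\lambda, B_\lambda, C_\lambda)$ with $\det A_\lambda = \det B_\lambda = \det C_\lambda = 1$ such that $\|C_\lambda Q_{z_0}(A_\lambda \cdot, B_\lambda \cdot)\|$ tends to zero. In the adapted coordinates $(x,t)$ with $(\pi_2(z), \pi_1(z)) = (x, (t, \phi(x,t)))$, these operators act naturally on the $x$-, $t$-, and $\phi$-coordinates, respectively. Testing \eqref{thebnd} against characteristic functions of parallelepipeds shaped by $(A_\lambda, B_\lambda, C_\lambda)$ yields a Knapp-type lower bound on the left-hand side that grows faster than the product of $L^p$-norms on the right as $\lambda \to \infty$; this is the shape of the destabilization argument carried out in Section \ref{neccsec}.

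For the ``if'' direction, the testing condition of \cite{testingcond} converts \eqref{thebnd} into a uniform integrability statement of the form \eqref{theintegral} over the $t$-fibers, where the $p$-form-valued function $u(t)$ is constructed from $\phi$ and its mixed partial derivatives in a manner that, upon linearization at $z_0$, reproduces the bilinear map $Q_{z_0}$. To apply Theorem \ref{bigtheorem1} to this integral one must (i) produce a block decomposition $(U_0, V)$ for $u$, which is achieved by the joint row/column reduction algorithm promised in Section \ref{examplesec} applied to the derivative data of $\phi$, with formal degrees $D_{ij}$ read off from the orders of vanishing of the resulting blocks just as in the $5 \times 5$ example of the introduction; and (ii) verify \eqref{nondegenhyp} with a weight $w$ that is nonvanishing in a neighborhood of $z_0$. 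Setting $w = a$ for a bump function $a$ supported in such a neighborhood then completes the reduction.

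The heart of the argument, and the step I expect to be the main obstacle, is the identification that makes (ii) work: one must check that the invariant-theoretic tensor extracted from $u$ at $z_0$, via the framework of Theorem \ref{usefulthm}, coincides (under the natural identifications of the successive quotients $U_i/U_{i+1}$ and $V_j/V_{j+1}$ with the factors appearing in \eqref{qdescribed}) with $Q_{z_0}$. Once this dictionary is verified, semistability of $Q_{z_0}$ combined with Theorem \ref{usefulthm} produces a positive lower bound in \eqref{nondegenhyp} at $z_0$; Nash regularity then propagates this bound to a neighborhood of $z_0$, and Theorem \ref{bigtheorem1} yields the integrability with constant growing at most polynomially in $K$. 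Feeding this back through the testing condition delivers \eqref{thebnd} with the claimed $K$-dependence.
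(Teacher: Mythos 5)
Your outline of the ``only if'' direction matches the paper's: destabilize $Q_{z_0}$ via a Hilbert--Mumford one-parameter subgroup and test against characteristic functions of appropriately skewed parallelepipeds; the resulting volume count violates \eqref{thebnd} as the destabilizing parameter grows. That part is fine (the paper also separately establishes, as a preliminary step, that the transversality $\ker d\pi_1 \cap \ker d\pi_2 = \{0\}$ is itself forced by \eqref{thebnd}, which your proposal takes as given).

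The ``if'' direction has a genuine gap in step (i). You propose producing the block decomposition by the joint row/column reduction algorithm of Section~\ref{examplesec}; the paper does \emph{not} do this, and the difference matters. That algorithm produces a decomposition whose levels and formal degrees depend on the specific Taylor jet of $\phi$ --- generically yielding a multi-level decomposition with formal degrees $\geq 2$ that encode third- and higher-order derivatives of $\phi$. Nothing in the hypothesis (semistability of $Q_{z_0}$, which is a second-jet/first-commutator condition) controls such higher-order blocks, so your construction would either fail or impose hypotheses that are not in the theorem statement. What the paper actually uses is Proposition~\ref{blockprop2}: \emph{every} nonvanishing decomposable $u$ admits a universal two-level block decomposition with $\mstar = 0$, $m = 1$, $D_{00} = 0$, $D_{01} = 1$, where $V_1(t) = \ker u(t)$. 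Because $D_{01} = 1$, the nondegeneracy condition in \eqref{nondegenhyp} for the tile $\mathcal{T}_1 = [0,0]\times[1,1]$ involves only \emph{first} $t$-derivatives of $u$, i.e.\ second derivatives of $\phi$, and via the identity \eqref{commutator} the resulting tensor $P_{\mathcal{T}_1,(x,t)}$ is precisely $\Phi_{(x,t)}\circ Q_{(x,t)}$ restricted to $\ker d\pi_1 \times \ker d\pi_2$; the tile $\mathcal{T}_0$ contributes a determinant factor which is automatically nonzero since $u$ is nonvanishing. This is why semistability of $Q_{z_0}$ is \emph{exactly} the condition needed, and why no ad hoc reduction of $\phi$ is required. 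The rest of your outline --- feed the resulting lower bound through Theorem~\ref{usefulthm} to get \eqref{curvaturecase} with a nonvanishing weight near $z_0$, choose $a$ so that $|a|^{r_1'}$ is dominated by that weight, and apply the testing condition of \cite{testingcond} --- does then go through.
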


Although the details will not be pursued here, the methods of Section 7 of \cite{gressmancurvature} can be applied to Theorem \ref{characterize} to show that it continues to hold in the $C^\infty$ category at the cost of $\epsilon$ losses in the Lebesgue exponents. 

\subsection{Notation and Summary}

We will make extensive use of multiindex notation: any $\alpha := (\alpha_1,\ldots,\alpha_d) \in \Z^{d}$ with nonnegative entries will be called a multiindex on $\R^d$. The order or degree of $\alpha$ will be denoted $|\alpha|$ and equals $\alpha_1 + \cdots + \alpha_d$. Similarly $\alpha! := \alpha_1! \cdots \alpha_d!$ and $x^{\alpha} := \prod_{i=1}^d x_i^{\alpha_i}$ when $x := (x_1,\ldots,x_d) \in \R^d$. Regarding derivatives, we define
\[ \partial^\alpha := \prod_{i=1}^d \left( \frac{\partial}{\partial x_i} \right)^{\alpha_i}. \]
When there are several sets of variables and potential confusion may arise, a subscript will denote those to which the derivatives apply (i.e., $\partial_x^{\alpha}$ indicates partial derivatives in the $x$ variables only).
If $M$ is any $d \times d$ real matrix with entries $M_{ij}$ in the standard basis, we let
\begin{equation} (M \partial)_k := \sum_{k'=1}^d M_{kk'} \frac{\partial}{\partial x_{k'}} \label{matchangedef} \end{equation}
and note that the chain rule dictates that
\[ (M \partial)_k f(0) = \partial_k|_{z=0}\left[ f(M^T z) \right], \]
where $M^T$ is the transpose of $M$. When multiindex and matrix notation are mixed, one defines
\[ (M \partial)^{\alpha} := \prod_{i=1}^d \left( (M \partial)_i \right)^{\alpha_i}. \]

For each positive integer $k$, the symbol $\mathbf{1}_k \in \R^k$ denotes the vector whose coordinates are all $1$: $\mathbf{1}_k := (1,\ldots,1)$. The symbol $\delta_{kk'}$ is the Kronecker $\delta$ and is defined to equal $1$ when $k = k'$ and zero when $k \neq k'$. Each vector space $\R^k$ is also assumed to be equipped with a standard basis $\{\basis^i\}_{i=1}^k$ defined as usual so that $\basis^i$ has coordinate $1$ in position $i$ and zeros elsewhere. When dealing with long tuples of real numbers, semicolons will occasionally be used as separators in place of the more familiar comma; in most cases, this is done to highlight product structure. For example, the notation $(k^{-1} \mathbf{1}_k ; n^{-1} \mathbf{1}_n)$ will denote the element of $\R^k \times \R^n$ whose first $k$ entries equal $k^{-1}$ and whose final $n$ entries equal $n^{-1}$.

Given two integers $i_L \leq i_R$ (in particular, possibly equal), the notation $[i_L,i_R]$ will be used to indicate the interval in $\Z$ with these endpoints:  $[i_L,i_R] := \set{i \in \Z}{i_L \leq i \leq i_R}$.  The symbol $\mathbf{1}_{[i_L,i_R]}$ will be understood to be the indicator function of $[i_L,i_R]$.

The paper is structured as follows. Section \ref{background} is devoted to the proofs of a number of basic propositions and other results which are needed for Theorems \ref{bigtheorem1} and \ref{characterize}. Of particular note is Section \ref{gitsec}, where the reader will find elementary, standalone proofs for a number of results from Geometric Invariant Theory which are needed when establishing that the hypothesis \eqref{nondegenhyp} holds for some nontrivial weight $w$. Many of these results are already known in much greater generality than the forms appearing in Section \ref{gitsec}, but it is useful nevertheless to approach Geometric Invariant Theory from the perspective of analysis rather than algebra to see why these objects and ideas yield effective tools for quantifying derivatives in higher dimensions and ultimately for bounding the integral \eqref{theintegral}. Section \ref{nash} is a very rapid review of the properties of Nash functions; see \cite{gressmancurvature} for more details. Section \ref{kernelsec} proves the main results (e.g., Proposition \ref{kernelparam}) which are needed in Section \ref{adaptsec} to establish the key properties of block decompositions.

At their core, block decompositions should be thought of as a sort of generalization of matrix row and column reduction which apply to matrices whose entries are functions rather than scalars. The underlying symmetry properties of forms $u(t)$ allow one to engage in a limited sort of elimination process for these variable matrices. In general, while it isn't possible to eliminate entries entirely, it will be possible to eliminate low order terms of the Taylor series expansions of the entries and be left with functions which vanish to increasingly high orders. The condition \eqref{reductiondef} in the definition of block decompositions records the exact criterion which quantifies what is needed---as one moves to smaller subspaces $U_0(t) \supset U_1(t) \supset \cdots$ and $V_0(t) \supset V_1(t) \supset \cdots$, the reward is that the pairings \eqref{reductiondef} vanish to increasingly high orders. One question which will not be deeply explored in the present paper is the extent to which one can fully automate the process of generating block decompositions. Proposition \ref{blockprop2} is a first step in this direction, but is limited to only that which is needed for the proof of Theorem \ref{bigtheorem1}. In all other cases explored here, it is easier to use ad-hoc methods to produce block decompositions than it would be to invoke heavier machinery.

Section \ref{proofsec} contains the proof of Theorem \ref{bigtheorem1}. Aside from the tools developed in Section \ref{background}, Theorem 1 also relies heavily on the geometric differential inequalities which have appeared in various forms in other recent results \cites{gressman2019,gressmancurvature}. The key features of these inequalities are recalled in Theorem \ref{mainineq} in Section \ref{gdineqsec}. Section \ref{actualproof} contains the proof of Theorem 1.

Section \ref{usefulsec} further explores the relationship between Geometric Invariant Theory and Theorem \ref{bigtheorem1}. Theorem \ref{usefulthm} of this section establishes a sufficient condition for proving the inequality \eqref{nondegenhyp} which involves a certain Newton-like polytope (one related to but still simpler than the one in \cite{gressmancurvature}). Theorem \ref{usefulradon} connects Theorem \ref{usefulthm} to the motivating application of Radon-like transforms. The resulting theorem is in the same general spirit as the work of Christ, Dendrinos, Stovall and Street \cite{cdss2020}, the key differences being that \cite{cdss2020} is more general in terms of the H\"{o}rmander condition configurations it admits, while Theorem \ref{usefulradon} is more restricted in the H\"{o}rmander sense (Theorem \ref{usefulradon} deals only with ``one-sided'' H\"{o}rmander spanning sets) but applies to averages beyond curves.

Section \ref{examplesec} includes a number of examples of varying complexity to demonstrate the power and flexibility of the tools developed in Sections \ref{background} and \ref{usefulsec}. 

Section \ref{modelsec} contains the proof of Theorem \ref{characterize}. Sufficiency of semistability is shown in Section \ref{suffsec} as an application of Theorem \ref{usefulsec}. Section \ref{neccsec} establishes necessity and is based on a family of Knapp-type examples. Finally, Section \ref{remarksec} contains some concluding remarks.

\section{Preparatory Tools and Computations}
\label{background}
\subsection{Tools from Geometric Invariant Theory}
\label{gitsec}

This section introduces the basic set of tools, ideas, and results from Geometric Invariant Theory which will play a role in the estimation of the integral \eqref{theintegral}. For the most part, the propositions and corollaries found here are special cases of much more general results, but there is significant utility in giving elementary, standalone proofs of the facts we will use. In particular, the presentation found here, while limited, does not require that the reader have any previous knowledge of the theory of linearly reductive real algebraic groups.

The main objects of interest are matrices whose entries are real polynomials. Specifically, suppose that $P(x,y,z)$ is bilinear in the variables $x := (x_1,\ldots,x_p) \in \R^p$ and $y := (y_1,\ldots,y_q) \in \R^q$ with values which are polynomials of degree at most $D$ in the variable $z \in \R^d$, i.e.,
\[ P(x,y,z) := \sum_{i=1}^p \sum_{j=1}^q \sum_{|\alpha| \leq D} c_{ij\alpha}  x_i y_j z^{\alpha}. \]
We fix a norm of Hilbert-Schmidt type on the vector space of such objects:
\begin{equation} ||P||^2 := \sum_{i=1}^p \sum_{j=1}^q \sum_{|\alpha| \leq D} \frac{1}{\alpha!} \left| \partial^{\alpha}_z P(\basis^i,\basis^j,z)|_{z=0} \right|^2. \label{stdnorm} \end{equation}
There is a natural representation of the group $\SL(p,\R) \times \SL(q,\R) \times \GL(d,\R)$ acting on $P$ which is given by
\[ (\rho_{(A,B,C)} P)(x,y,z) := P(A^T x, B^T y, C^T z) \]
for any $(A,B,C) \in \SL(p,\R) \times \SL(q,\R) \times \GL(d,\R)$. For any $\sigma > 0$, a fundamental question which will be repeatedly encountered in the computations that follow is whether the infimum
\begin{equation} ||| P|||_{\sigma} := \inf_{A \in \SL(p,\R)} \inf_{B \in \SL(q,\R)} \inf_{C \in \GL(d,\R)} |\det C|^{-\sigma} || \rho_{(A,B,C)} P|| \label{gitnorm} \end{equation}
is strictly positive or not. Informally, strict positivity of $|||P|||_\sigma$ means that there do not exist bases of $\R^p, \R^q$, and $\R^d$ in which $P$ looks ``abnormally small.'' The parameter $\sigma$ allows some nuance in quantifying exactly what ``abnormal'' means in this context. When $P$ happens to have entries which are all homogeneous of degree $D$, a simple scaling argument shows that $|||P|||_{\sigma} = 0$ whenever $\sigma \neq D d^{-1}$. The particular case $\sigma = D d^{-1}$ reduces to one in which real Kempf-Ness theory readily applies; in particular, $|||P|||_{\sigma} > 0$ exactly when $P$ is semistable with respect to the representation $\rho$ (see \cite{bl2021}, for example). For general (i.e., not necessarily homogeneous) $P$, the range of $\sigma$ for which $|||P|||_{\sigma}$ is not identically zero will be contained in the interval $[0,D d^{-1}]$.

The first and most fundamental fact to observe regarding \eqref{gitnorm} is that one always has
\begin{equation} |||P|||_{\sigma} = \big|\big|\big| |\det C|^{-\sigma} \rho_{(A,B,C)} P \big|\big|\big|_{\sigma} \label{groupprop} \end{equation}
for any $\SL(p,\R) \times \SL(q,\R) \times \GL(d,\R)$, which follows rather directly from the fact that $\rho$ is a representation (and that $|\det C_1 C_2|^{-\sigma} = |\det C_1|^{-\sigma} |\det C_2|^{-\sigma}$).
Another equally fundamental observation is that the standard norm \eqref{stdnorm} exhibits an extremely convenient invariance property: $||P|| = ||\rho_{(O_1,O_2,O_3)} P||$ for all orthogonal matrices $O_1,O_2,O_3$. Specifically for $O_1$ and $O_2$, the proof is an elementary consequence of the definition \eqref{stdnorm} and the definition of orthogonal matrices; invariance in $O_3$ is a consequence of the following proposition.
\begin{proposition}
Suppose $P$ is a homogeneous polynomial of degree $D$ on $\R^d$. Then for any $d \times d$ orthogonal matrix $O$, \label{proportho}
\[ \sum_{|\alpha|=D} \frac{1}{\alpha!} |\partial^\alpha P(O z)|_{z=0}|^2 = \sum_{|\alpha|=D} \frac{1}{\alpha!} |\partial^\alpha P(0)|^2. \]
\end{proposition}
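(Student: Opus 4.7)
My plan is to recognize the sum in question as the diagonal of a natural bilinear form on homogeneous polynomials of degree $D$ and then establish its orthogonal invariance via a short chain-rule computation. Writing $P(z) = \sum_{|\alpha|=D} a_\alpha z^\alpha$, I would first note that $\partial^\alpha P(0) = \alpha! \, a_\alpha$ when $|\alpha|=D$, so the quantity being studied equals $\sum_{|\alpha|=D} \alpha! \, a_\alpha^2$. This suggests introducing the apolar form
\[ \langle P, Q \rangle := P(\partial) Q \big|_{z=0}, \]
where $P(\partial)$ is the constant-coefficient differential operator obtained by replacing each $z_i$ by $\partial_{z_i}$. For $P$ and $Q$ both homogeneous of degree $D$, a direct check using $\partial^\alpha z^\beta \big|_{z=0} = \alpha! \delta_{\alpha\beta}$ shows that $\langle P, Q\rangle = \sum_{|\alpha|=D} \alpha! \, a_\alpha b_\alpha$; in particular, $\langle P, P\rangle$ is exactly the sum on the right-hand side of the claimed identity.

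Next I would show $\langle P, P\rangle = \langle \tilde P, \tilde P\rangle$ where $\tilde P(z) := P(Oz)$. Since $\tilde P(y) = P(Oy)$, the definition of the differential operator associated to a polynomial gives $\tilde P(\partial_z) = P(O \partial_z)$. The heart of the argument is the chain-rule identity
\[ (O \partial_z)_i \,\tilde P(z) = \sum_j O_{ij} \partial_{z_j} P(Oz) = \sum_{j,k} O_{ij} O_{kj} (\partial_k P)(Oz) = (\partial_i P)(Oz), \]
where the orthogonality relation $O O^T = I$ is used in the last step. Iterating, $(O\partial_z)^\alpha \tilde P(z) = (\partial^\alpha P)(Oz)$, and hence
\[ \tilde P(\partial_z) \tilde P(z) \;=\; P(O\partial_z)\,P(Oz) \;=\; \sum_\alpha a_\alpha (\partial^\alpha P)(Oz) \;=\; \bigl(P(\partial)P\bigr)(Oz). \]
Since $P$ is homogeneous of degree $D$, the polynomial $P(\partial)P$ is a constant, namely $\langle P, P\rangle$, so the right-hand side equals $\langle P, P\rangle$ identically in $z$. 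Evaluating at $z=0$ yields $\langle \tilde P, \tilde P \rangle = \langle P, P\rangle$, which is exactly the claimed identity after translating back to the $\partial^\alpha$ notation.

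I expect no substantive obstacle: the whole argument rests on the fact that orthogonality of $O$ collapses the double sum $\sum_j O_{ij}O_{kj}$ to $\delta_{ik}$, which is precisely what makes the operator $P(\partial)$ transform covariantly under orthogonal changes of variable. The only care needed is being careful about the distinction between the symbol $P$ as a polynomial and the associated differential operator $P(\partial)$, and about the fact that the resulting expressions are constant polynomials (so they may be freely evaluated at $z=0$ before or after the change of variables without ambiguity).
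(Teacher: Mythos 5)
Your proof is correct, and while it rests on the same orthogonality relation $\sum_j O_{ij}O_{kj}=\delta_{ik}$ that drives the paper's computation, it is packaged in a genuinely different way. You identify $\sum_{|\alpha|=D}\frac{1}{\alpha!}|\partial^\alpha P(0)|^2$ as the apolar (Bombieri) norm $\langle P,P\rangle = P(\partial)P|_{z=0}$ and then prove the covariance identity $(O\partial_z)^\alpha \tilde P(z) = (\partial^\alpha P)(Oz)$ with $\tilde P = P\circ O$, from which $\tilde P(\partial)\tilde P = (P(\partial)P)\circ O$ follows; since this is a constant (by homogeneity) the result drops out at $z=0$. The paper instead rewrites the $\frac{1}{\alpha!}$-weighted multiindex sum as the unweighted sum $\frac{1}{D!}\sum_{i_1,\dots,i_D}|\partial_{i_1}\cdots\partial_{i_D}P(z)|^2$ over ordered tuples and contracts the $O$-indices directly. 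Your version is somewhat more conceptual --- it yields the $O(d)$-invariance of the apolar \emph{form}, not just its diagonal --- and the intermediate steps (especially the commutation $(O\partial_z)_i\tilde P = (\partial_i P)\circ O$) are reusable, whereas the paper's approach is a shorter and more elementary index manipulation tied to this specific identity. Two very small points worth stating explicitly if you write this up: (i) $P(\partial)P$ is constant precisely because $P(\partial)$ is a pure order-$D$ operator acting on a degree-$D$ polynomial, which is where homogeneity is used; and (ii) you need the identity $\langle Q,Q\rangle = \sum_{|\alpha|=D}\frac{1}{\alpha!}|\partial^\alpha Q(0)|^2$ applied to $Q = \tilde P$ as well as to $Q = P$, which is immediate but should be noted so the translation back to the $\partial^\alpha$ form is airtight.
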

\begin{proof}
The key observation is that
\[ \sum_{|\alpha| = D} \frac{D!}{\alpha!} |\partial^\alpha P(z)|^2 = \sum_{i_1=1}^d \cdots \sum_{i_D=1}^d |\partial_{i_1} \cdots \partial_{i_D} P(z)|^2 \]
by virtue of the fact that there are exactly $D!/\alpha!$ ways to represent $\partial^\alpha$ as an iterated derivative $\partial_{i_1} \cdots \partial_{i_d}$.
But now
\begin{align*}
\sum_{i_1,\ldots,i_D=1}^d & |\partial_{i_1} \cdots \partial_{i_D} P(O z)|_{z=0}|^2 \\ & = \sum_{\substack{i_1,\ldots,i_D \\ j_1,\ldots,j_D \\ j'_1,\ldots,j'_D}=1}^d \left[ \prod_{\ell=1}^D O_{j_\ell i_\ell} O_{j'_\ell i_\ell} \right]  \left[ (\partial_{j_1} \cdots \partial_{j_{D}} P)(0) \right]  \left[ (\partial_{j'_1} \cdots \partial_{j'_{D}} P)(0) \right]; 
\end{align*}
summing over $i_1,\ldots,i_D$ first and using orthogonality of $O$ annihilates all terms except those where $j_\ell=j'_\ell$ for each $\ell=1,\ldots,D$, which is exactly the desired identity.
\end{proof}

From the standpoint of the analysis which will follow, the most important thing to know about the property of nondegeneracy is that it is stable as a function of $P$.

\begin{lemma}
Suppose $||\cdot||$ is the norm \eqref{stdnorm} on bilinear maps $P$ on $\R^p \times \R^q$ mapping into real polynomials of degree at most $D$ on $\R^d$. Then the map $P \mapsto ||| P|||_{\sigma}$
is continuous as a function of $P$ for each $\sigma \geq 0$. \label{continuity}
\end{lemma}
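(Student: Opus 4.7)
The strategy is to prove upper and lower semicontinuity at every $P$ separately. Upper semicontinuity is immediate: for each fixed $(A,B,C)$, the map $P \mapsto |\det C|^{-\sigma} \|\rho_{(A,B,C)} P\|$ is continuous as a norm composed with a linear operator on the finite-dimensional space of bilinear maps, and a pointwise infimum of continuous functions is upper semicontinuous.

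For lower semicontinuity, let $P_n \to P$ and choose near-minimizers $(A_n, B_n, C_n)$ satisfying $|\det C_n|^{-\sigma} \|\rho_{(A_n,B_n,C_n)} P_n\| \leq |||P_n|||_\sigma + 1/n$. The central step is to select them so that the linear operators $T_n := |\det C_n|^{-\sigma} \rho_{(A_n,B_n,C_n)}$ on the (finite-dimensional) bilinear-map space have uniformly bounded operator norm. Once accomplished, the triangle inequality
\[ |||P|||_\sigma \leq \|T_n P\| \leq \|T_n P_n\| + \|T_n\|\,\|P - P_n\| \leq |||P_n|||_\sigma + \tfrac{1}{n} + \|T_n\|\,\|P - P_n\| \]
yields $|||P|||_\sigma \leq \liminf_n |||P_n|||_\sigma$ upon letting $n \to \infty$.

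To produce bounded minimizing sequences, the plan is to apply Proposition \ref{proportho} for the $z$ variables together with the easier analogous invariance statements in $x$ and $y$, combined with QR decomposition, to replace $A_n, B_n, C_n$ by upper-triangular matrices with positive diagonals (of unit product in the $\SL$ factors). Then decompose $P = \sum_{k=0}^D P_k$ into parts homogeneous of degree $k$ in $z$, noting that these occupy pairwise-orthogonal subspaces of \eqref{stdnorm}. Writing $C_n = \lambda_n C'_n$ with $|\det C'_n| = 1$ produces a scaling identity of the form $|\det C_n|^{-2\sigma} \|\rho_{(A_n,B_n,C_n)} P\|^2 = \sum_{k=0}^D \lambda_n^{2(k - d\sigma)} \|\rho_{(A_n,B_n,C'_n)} P_k\|^2$, and boundedness of the left-hand side confines $\lambda_n$ to a compact subinterval of $(0,\infty)$ whenever exponents $k - d\sigma$ of both signs contribute. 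A parallel analysis of the diagonal entries of the upper-triangular $A_n, B_n$ and $C'_n$ played against the nonvanishing Taylor coefficients of $P$ bounds the remaining scalar parameters.

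The main obstacle is ruling out escape of $(A_n, B_n, C_n)$ to infinity along one-parameter subgroups of the Cartan torus of $G = \SL(p,\R) \times \SL(q,\R) \times \GL(d,\R)$. The plan is a dichotomy: if $|||P|||_\sigma = 0$ then the desired inequality $\liminf_n |||P_n|||_\sigma \geq |||P|||_\sigma$ is automatic from $|||P_n|||_\sigma \geq 0$; if $|||P|||_\sigma > 0$ then escape along a destabilizing one-parameter subgroup would force certain weight-space components of $P_n$ to vanish to leading order, and passing to the limit $n \to \infty$ the same vanishing would hold for $P$, contradicting $|||P|||_\sigma > 0$ via a Hilbert-Mumford-style argument. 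This last step is where I expect the bulk of the technical work to lie.
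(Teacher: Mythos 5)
Your reduction to lower semicontinuity and your idea of working with near-minimizers both start off on the right track, and the use of SVD/QR together with a weight-space decomposition is a standard Kempf--Ness-style move. But the dichotomy you invoke at the end --- ``escape... would force certain weight-space components of $P_n$ to vanish to leading order, and passing to the limit... the same vanishing would hold for $P$, contradicting $|||P|||_\sigma > 0$'' --- does not close as stated, and this is precisely where you acknowledge the bulk of the work lies. Two concrete obstructions. First, escape of near-minimizers for $P_n$ only forces vanishing of the \emph{strictly positive-weight} components of (a subsequential rotated limit of) $P$, whereas the Hilbert--Mumford criterion for $|||P|||_\sigma = 0$ requires every monomial index at which $P$ is nonzero to have \emph{strictly negative} weight; zero-weight components are left unconstrained, so no contradiction results. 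Second, the escape direction is not a fixed one-parameter subgroup: both the singular directions and the rates can wander with $n$, which creates a delicate competition between the rate of escape and the rate of convergence $P_n \to P$ when you try to control the zero-weight block. Without very careful bookkeeping (for instance, controlling $t_n\langle v_n, w_{ij\alpha}\rangle$ for zero-weight indices $w_{ij\alpha}$ as $v_n \to v$), you cannot even conclude that the zero-weight part of $P$ has norm at most $\liminf_n |||P_n|||_\sigma$, let alone produce a contradiction.

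The paper avoids the compactness-of-minimizers question entirely, and its argument is noticeably shorter. After the SVD reduction it defines the restricted functional $F(P)$ as the infimum over \emph{diagonal} $(D_1,D_2,D_3)$ only, notes $|||P|||_\sigma = \inf_{O_1',O_2',O_3'} F(\rho_{(O_1',O_2',O_3')}P)$, and then proves that $F$ satisfies a \emph{multiplicative} continuity bound: if $\|P' - P\| < \epsilon c$ where $c$ is the smallest nonzero normalized coefficient of $P$, then componentwise $|P'_{ij\alpha}| \geq (1-\epsilon)|P_{ij\alpha}|$, and since the diagonal action scales each monomial index independently this gives $\|\rho_{(D_1,D_2,D_3)}P'\| \geq (1-\epsilon)\|\rho_{(D_1,D_2,D_3)}P\|$ \emph{uniformly in} $(D_1,D_2,D_3)$, hence $F(P') \geq (1-\epsilon)F(P)$ with no reference to minimizers whatsoever. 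Compactness of the orthogonal groups then upgrades this to lower semicontinuity of the full triple norm. If you want to salvage your outline, I would suggest discarding the near-minimizer formulation and replacing it with this componentwise multiplicative bound on the diagonal-restricted functional; otherwise the zero-weight and wandering-direction issues must be handled explicitly.
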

\begin{proof}
The infimum of any family of nonnegative continuous maps is always upper semicontinuous, so it suffices to show that the map $P \mapsto ||| P|||_{\sigma}$ is lower semicontinuous. Because $||P|| = ||\rho_{(O_1,O_2,O_3)} P||$ for any orthogonal matrices $O_1,O_2,O_3$, by the Singular Value Decomposition, if one defines the function
\[ F(P) := \inf_{D_1,D_2,D_3} |\det D_3|^{-\sigma} || \rho_{(D_1,D_2,D_3)} P|| \]
where $D_1,D_2,D_3$ are diagonal matrices with nonnegative entries and $\det D_1 = \det D_2 = 1$, it follows by the Singular Value Decomposition that
\begin{equation} |||P|||_{\sigma} = \inf_{O'_1,O'_2,O'_3} F(\rho_{(O'_1,O'_2,O'_3)} P)  \label{twostep} \end{equation}
where $O_1',O_2',O_3'$ are again orthogonal matrices. If $F$ is shown to be continuous, then \eqref{twostep} ensures that $P \mapsto |||P|||_{\sigma}$ will be as well by virtue of compactness of the orthogonal groups. This is because given any $a$ and $P$ such that $|||P|||_{\sigma} > a$, one can cover $\{P\} \times O(p) \times O(q) \times O(d)$ by open sets on which $F(\rho_{(O'_1,O'_2,O'_3)} P') > a$ for all $(P',O_1',O_2',O_3')$ in the open set, and the compactness implies that there is some neighborhood $U$ of $P$ and some $\epsilon > 0$ such that $F(\rho_{(O'_1,O'_2,O'_3)} P') > a + \epsilon$ for all $P' \in U$ and all orthogonal $O_1',O_2',O_3'$, which forces $|||P'|||_{\sigma} \geq a + \epsilon$ for all $P' \in U$.

After these reductions, it suffices to show that $F$ is lower semicontinuous. To that end, observe that
\begin{equation} || \rho_{(D_1,D_2,D_3)} P||^2 = \sum_{i=1}^p \sum_{j=1}^q \sum_{|\alpha|\leq D} \left[(D_1)_{ii} (D_2)_{jj} D_3^\alpha\right]^2 \frac{|\partial^\alpha P(\basis^i,\basis^j,z)|_{z=0}|^2}{\alpha!} . \label{issmall} \end{equation}
Here $(D_1)_{ii}$ is the $i$-th diagonal entry of $D_1$. The quantities $(D_2)_{jj}$ and $(D_3)_{\ell \ell}$ are defined similarly, and for any multiindex $\alpha := (\alpha_1,\ldots,\alpha_d)$, one defines
\[ D_3^{\alpha} := \prod_{\ell=1}^d \left[ (D_3)_{\ell \ell} \right]^{\alpha_\ell}. \]
If $P= 0$, there is nothing to prove since $F(P) > a$ implies that $a$ is negative and it is already known that $F > a$ holds everywhere when $a$ is negative. So let $c$ be the minimum of $(\alpha!)^{-1/2} |\partial^\alpha P(\basis^i,\basis^j,z)|_{z=0}|$ among those $(i,j,\alpha)$ for which this quantity is nonzero. If $||P-P'|| < \epsilon c$ for some $\epsilon \in (0,1)$, then
\[  (\alpha!)^{-1/2} |\partial^\alpha P'(\basis^i,\basis^j,z)|_{z=0}| \geq (1-\epsilon) (\alpha!)^{-1/2} |\partial^\alpha P(\basis^i,\basis^j,z)|_{z=0}|, \]
for each $\alpha,i$, and $j$ (since the inequality is trivially true when $\partial^{\alpha} P (\basis^i,\basis^j,z) = 0$). It follows for each such $P'$ that
$|| \rho_{(D_1,D_2,D_3)} P'|| \geq (1-\epsilon) || \rho_{(D_1,D_2,D_3)} P||$ for all nonnegative diagonal $D_1,D_2,D_3$.
But this means that for all $\epsilon \in (0,1)$, there is a neighborhood of $P$ on which $F(P') \geq (1-\epsilon) F(P)$ for all $P'$ in the neighborhood. Since we may assume without loss of generality that $F(P) > 0$, if $F(P) > a > 0$, then for any small $\epsilon$ such that $(1-\epsilon) F(P) > a$, there will be a neighborhood of $P$ on which $F > a$ as desired.
\end{proof}

A useful corollary is the following result, known in Geometric Invariant Theory as the Hilbert-Mumford criterion (see \cites{mfk1994,dc1970,sury2000}). In the language of analysis, it asserts that $|||P|||_{\sigma} = 0$ if and only if there is a rotation of the standard coordinate system and a one-parameter family of (presumably nonisotropic) rescalings of $P$ in the new coordinates which sends $P$ to $0$.
\begin{corollary} For any $P$ as above, $|||P|||_{\sigma} = 0$ if and only if there exist orthogonal matrices $O_1',O_2',O_3'$ and diagonal matrices $D_1',D_2',D_3'$ with $D_1'$ and $D_2'$ traceless such that 
\[ \lim_{t \rightarrow \infty} |\det e^{t D_3'} |^{-\sigma} \rho_{(e^{tD_1'},e^{t D_2'},e^{t D_3'})} \rho_{(O_1',O_2',O_3')} P = 0. \]
\end{corollary}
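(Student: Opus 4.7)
The $(\Leftarrow)$ direction is essentially immediate: for large $t$ the triple $(A,B,C) := (e^{tD_1'}O_1', e^{tD_2'}O_2', e^{tD_3'}O_3')$ lies in $\SL(p,\R)\times\SL(q,\R)\times\GL(d,\R)$ (modulo absorbing a sign $\det O_i' = -1$ into a reflection, which leaves $|\det C|^{-\sigma}\|\rho_{(A,B,C)}P\|$ unchanged by orthogonal invariance of $\|\cdot\|$), so the hypothesized limit forces the infimum \eqref{gitnorm} to vanish.

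For the nontrivial $(\Rightarrow)$ direction, the plan has two steps. First, I would reduce to the assertion that the function $F$ introduced in the proof of Lemma \ref{continuity} satisfies $F(P') = 0$ at some $P' := \rho_{(O_1',O_2',O_3')}P$ with $O_i'$ orthogonal. Given any minimizing sequence $(A_n,B_n,C_n)$ for $|||P|||_\sigma$, apply the singular value decomposition to each factor to write $A_n = U_n^A \Sigma_n^A (V_n^A)^T$ (and similarly for $B_n$, $C_n$). The orthogonal invariance of $\|\cdot\|$ permits discarding the outer factors $U_n^A, U_n^B, U_n^C$, while the inner factors are absorbed by setting $P_n := \rho_{((V_n^A)^T,(V_n^B)^T,(V_n^C)^T)} P$. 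The constraints $\det A_n = \det B_n = 1$ force $\det \Sigma_n^A = \det \Sigma_n^B = 1$, so the diagonal triples are admissible in $F(P_n)$ and hence $F(P_n) \to 0$. By compactness of the orthogonal groups one may pass to a subsequence along which $V_n^A \to V^A$ etc., and then $P_n \to P' := \rho_{((V^A)^T,(V^B)^T,(V^C)^T)} P$. Lower semicontinuity of $F$, which was established inside the proof of Lemma \ref{continuity}, finally yields $F(P') \leq \liminf F(P_n) = 0$.

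Second, I would extract the one-parameter family from the condition $F(P') = 0$. Parametrizing nonnegative diagonal matrices as $D_i = e^{d_i}$ with $d_i$ real diagonal, the identity \eqref{issmall} rewrites the quantity under the infimum as a sum of positive exponentials
\[ |\det e^{d_3}|^{-\sigma}\|\rho_{(e^{d_1},e^{d_2},e^{d_3})}P'\|^2 = \sum_{i,j,\alpha} c_{ij\alpha}^2 \exp\bigl(2L_{ij\alpha}(d_1,d_2,d_3)\bigr), \]
where $c_{ij\alpha}^2 := (\alpha!)^{-1} |\partial^\alpha_z P'(\basis^i,\basis^j,z)|_{z=0}|^2$ and $L_{ij\alpha}$ is a linear functional of the diagonal entries of $(d_1,d_2,d_3)$. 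Because each summand is nonnegative, $F(P') = 0$ requires the existence of a sequence of traceless $d_1^{(n)}, d_2^{(n)}$ and unconstrained $d_3^{(n)}$ for which $L_{ij\alpha}(d_1^{(n)},d_2^{(n)},d_3^{(n)}) \to -\infty$ for every $(i,j,\alpha)$ with $c_{ij\alpha} \neq 0$. For $n$ large enough any such triple provides a point $(D_1',D_2',D_3')$ satisfying $L_{ij\alpha}(D_1',D_2',D_3') \leq -1$ for all relevant $(i,j,\alpha)$, and the scaling identity $L_{ij\alpha}(tD_1',tD_2',tD_3') = tL_{ij\alpha}(D_1',D_2',D_3') \leq -t$ then sends every exponential to $0$ as $t\to\infty$. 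Composing with the orthogonal prefactors $O_i' := (V^A)^T$ etc.\ gives the claimed limit.

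The main obstacle is the bookkeeping involved in the SVD reduction, in particular matching the parity of $\det U_n$ against $\det V_n$ within each of the three $\SL$/$\GL$ factors so that the orthogonal invariance can be applied cleanly. The essential new ingredient, beyond what was established in Lemma \ref{continuity}, is the simple convex-geometric observation that the set of admissible $(d_1,d_2,d_3)$ with $L_{ij\alpha} \leq -1$ is nonempty (from the minimizing sequence) and closed under positive scaling, so that any one of its points generates an honest one-parameter subgroup realizing the desired vanishing limit.
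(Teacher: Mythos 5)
Your proposal is correct and takes essentially the same approach as the paper: reduce to $F(P')=0$ for an orthogonal conjugate $P'$ of $P$ via \eqref{twostep} (which you re-derive by applying SVD directly to a minimizing sequence and invoking lower semicontinuity of $F$, where the paper simply invokes continuity of $F$ on the compact orthogonal group), then extract a one-parameter diagonal subgroup by identifying a triple along which every surviving coefficient has its exponent strictly negative and scaling that triple to infinity. The paper accomplishes the second step by picking $D_1,D_2,D_3$ beating the smallest nonzero coefficient and taking powers $D_k^t$, which is the same mechanism as your parametrization by logarithms and linear functionals $L_{ij\alpha} \le -1$.
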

\begin{proof} For convenience, let $P_{ij}(z) := P(\basis^i,\basis^j,z)$ for each $i \in \{1,\ldots,p\}$ and $j \in \{1,\ldots,q\}$.
Since $F(\rho_{(O_1',O_2',O_3')} P)$ is continuous as a function of $O_1',O_2',O_3',$ and $P$, the infimum over $(O_1',O_2',O_3')$ is always attained and consequently $|||P|||_{\sigma} = 0$ implies that $F(\rho_{(O_1',O_2',O_3')} P) = 0$ for some $(O_1',O_2',O_3')$. Let $P' := \rho_{(O_1',O_2',O_3')}P$. Since $F(P') = 0$, there must be nonnegative diagonal matrices $D_1,D_2,D_3$ with $\det D_1 = \det D_2 = 1$ such that $|\det D_3|^{-\sigma} || \rho_{(D_1,D_2,D_3)} P'||$
is strictly less than the smallest positive value of $|\partial^{\alpha} P'_{ij}(0)| (\alpha!)^{-1/2}$ as $(i,j,\alpha)$ range over all possible elements of $\{1,\ldots,p\} \times \{1,\ldots,q\} \times \set{\alpha}{|\alpha| \leq D}$. It follows from the identity \eqref{issmall} that
\[ |\det D_3|^{-\sigma} (D_1)_{ii} (D_2)_{jj} D_3^{\alpha}  < 1 \]
for all $(i,j,\alpha)$ such that $|\partial^{\alpha} P_{ij}(0)| \neq 0$. Now let $D_1',D_2',D_3'$ be chosen so that $e^{t D_k'} = D_k$ when $t=1$ for each $k=1,2,3$. For all $t > 0$,
\[ |\det e^{tD_3'}|^{-\sigma} (e^{tD_1'})_{ii} (e^{tD_2'})_{jj} (e^{t D_3'})^{\alpha} = \left[|\det D_3|^{-\sigma} (D_1)_{ii} (D_2)_{jj} D_3^{\alpha} \right]^{t} \rightarrow 0 \]
as $t \rightarrow \infty$ when $\partial^{\alpha}P_{ij}(0) \neq 0$, so all coordinates of $\rho_{(e^{tD_1'},e^{tD_2'},e^{tD_3'})} P'$ tend to zero as $t \rightarrow \infty$.

The reverse direction is immediate from \eqref{twostep} since there will by assumption exist $O_1',O_2',O_3'$ such that $F(\rho_{(O_1',O_2',O_3')} P) = 0$.
\end{proof}

The next corollary presents a somewhat different way of understanding degeneracy. The construction is reminiscent of the Newton polytope and is similar to the condition appearing in \cite{gressmancurvature}.

\begin{corollary}
For any $P$ as above, let $\mathcal{N}(P)$ be the convex hull of all points $(\basis^i;\basis^j;\alpha) \in \R^{p} \times \R^{q} \times \R^{d}$ for which $\partial^{\alpha}_zP (\basis^i,\basis^j,z)|_{z=0}$ is nonzero. Then \label{newtoncor}
$|||P|||_\sigma = 0$ if and only if there exist orthogonal matrices $O_1',O_2',O_3'$ such that
\[ \left( p^{-1} \mathbf{1}_p; q^{-1} \mathbf{1}_q; \sigma \mathbf{1}_d \right) \not \in \mathcal{N}(\rho_{(O_1',O_2',O_3')}P). \]
\end{corollary}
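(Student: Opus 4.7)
The plan is to apply the Hilbert-Mumford criterion from the previous corollary and then translate its conclusion about one-parameter subgroups into a geometric statement about $\mathcal{N}(P')$ via strict convex separation. Throughout, write $P' := \rho_{(O_1',O_2',O_3')}P$ and, for any real diagonal matrices $D_1',D_2',D_3'$, set
\[ \Lambda(i,j,\alpha) := (D_1')_{ii} + (D_2')_{jj} + \sum_{\ell=1}^d (D_3')_{\ell\ell}\alpha_\ell - \sigma\operatorname{tr}(D_3'). \]
Substituting $D_k = e^{tD_k'}$ into the identity \eqref{issmall} and using $|\det e^{tD_3'}|^{-\sigma} = e^{-\sigma t\operatorname{tr}(D_3')}$ gives
\[ |\det e^{tD_3'}|^{-2\sigma}\bigl\|\rho_{(e^{tD_1'},e^{tD_2'},e^{tD_3'})}P'\bigr\|^2 = \sum_{i,j,\alpha} e^{2t\Lambda(i,j,\alpha)} \frac{|\partial^\alpha_z P'(\basis^i,\basis^j,z)|_{z=0}|^2}{\alpha!}, \]
so this family tends to zero as $t \to \infty$ if and only if $\Lambda(i,j,\alpha) < 0$ at every $(i,j,\alpha)$ for which $\partial^\alpha_z P'(\basis^i,\basis^j,0) \neq 0$, i.e., at every vertex of $\mathcal{N}(P')$.

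For the forward direction I would suppose $|||P|||_\sigma = 0$ and invoke the preceding corollary to produce orthogonal $O_1',O_2',O_3'$ and diagonal $D_1',D_2',D_3'$ with $\operatorname{tr}(D_1') = \operatorname{tr}(D_2') = 0$ such that the scaled family above tends to zero. Reading $\Lambda$ as the restriction to the relevant lattice points of the affine functional $L(x;y;\zeta) := D_1'\cdot x + D_2'\cdot y + D_3'\cdot \zeta - \sigma\operatorname{tr}(D_3')$ on $\R^p \times \R^q \times \R^d$, the tracelessness of $D_1'$ and $D_2'$ yields $L(p^{-1}\mathbf{1}_p;\, q^{-1}\mathbf{1}_q;\, \sigma\mathbf{1}_d) = 0$, while $L < 0$ at every vertex of $\mathcal{N}(P')$. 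Hence $L$ strictly separates the center point from the convex hull of those vertices, and the center point lies outside $\mathcal{N}(P')$.

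For the reverse direction I would suppose the center point lies outside $\mathcal{N}(P')$ for some orthogonal $O_1',O_2',O_3'$ and apply the strict separation theorem (valid since $\mathcal{N}(P')$ is a compact polytope) to obtain a linear functional $L(x;y;\zeta) = a\cdot x + b\cdot y + c\cdot \zeta$ with $L(v) < L(p^{-1}\mathbf{1}_p;\, q^{-1}\mathbf{1}_q;\, \sigma\mathbf{1}_d)$ at every vertex $v$. The main subtlety is to see that the traceless requirement on $D_1'$ and $D_2'$ can be arranged for free: every vertex $(\basis^i;\basis^j;\alpha)$ and the center point all satisfy $\mathbf{1}_p \cdot x = 1$ and $\mathbf{1}_q \cdot y = 1$, so replacing $a$ by $a - p^{-1}(a\cdot\mathbf{1}_p)\mathbf{1}_p$ (and similarly for $b$) shifts both sides of every strict inequality by the same constant without breaking it. Once $a$ and $b$ are traceless, taking $D_1',D_2',D_3'$ to be the diagonal matrices with diagonal entries $a$, $b$, $c$ respectively gives $\Lambda(i,j,\alpha) < 0$ at every vertex of $\mathcal{N}(P')$, so the displayed family tends to zero and the Hilbert-Mumford corollary delivers $|||P|||_\sigma = 0$. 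The only nontrivial step in the whole argument is this normalization; everything else is bookkeeping together with the standard separation of a point from a finite convex hull.
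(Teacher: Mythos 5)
Your proposal is correct and matches the paper's proof in all essentials: both directions invoke the Hilbert--Mumford corollary, use the strict separating hyperplane theorem between the center point and the finite convex hull, and rely on the same trace-normalization of the first two blocks (made harmless by the fact that every $\basis^i$ and the center have first-block coordinates summing to $1$). The only cosmetic difference is that you argue the implication $|||P|||_\sigma = 0 \Rightarrow$ (center $\notin \mathcal{N}(P')$) directly via the affine functional $L$, where the paper phrases the same step as a contradiction with a convex combination.
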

\begin{proof}
Let $P' := \rho_{(O'_1,O_2',O_3')} P$ and suppose that  there exist $O_1',O_2',O_3'$ such that $\left( p^{-1} \mathbf{1}_p; q^{-1} \mathbf{1}_q; \sigma \mathbf{1}_d \right) \not \in \mathcal{N}(P')$.  The Separating Hyperplane Theorem guarantees the existence of $(w_1;w_2;w_3) \in \R^{p} \times \R^{q} \times \R^{d}$ and some real constant $c$ such that
\[ c < (w_1;w_2;w_3) \cdot \left( p^{-1} \mathbf{1}_p; q^{-1} \mathbf{1}_q; \sigma \mathbf{1}_d \right) \]
and
\[  (w_1;w_2;w_3) \cdot (\basis^i;\basis^j;\alpha) < c \]
whenever $\partial^{\alpha}P'_{ij}(0) \neq 0$. Because $p^{-1} \mathbf{1}_p \cdot \mathbf{1}_p = \basis^i \cdot \mathbf{1}_p = 1$ for any $i$, when one subtracts a multiple of $\mathbf{1}_p$ from $w_1$, the quantities $w_1 \cdot p^{-1} \mathbf{1}_p$ and $w_1 \cdot \basis^i$ decrease by the same amount. Thus one may assume without loss of generality (changing the value of $c$ if necessary) that both $w_1$ and $w_2$ have entries which sum to zero, i.e., $w_1 \cdot \mathbf{1}_p = w_2 \cdot \mathbf{1}_q = 0$. In this case, it follows that
\begin{equation} (w_1;w_2;w_3) \cdot (\basis^i;\basis^j;\alpha) < c < \sigma w_3 \cdot \mathbf{1}_d \label{meansneg} \end{equation}
whenever $\partial^{\alpha} P'_{ij}(0) \neq 0$. Now let $D_1',D_2',D_3'$ be diagonal matrices whose entries are given by $w_1,w_2,$ and $w_3$, respectively. Using this definition, one has the identity
\begin{align*}
|\det e^{tD_3'}|^{-\sigma} (e^{tD_1'})_{ii} (e^{tD_2'})_{jj} (e^{t D_3'})^{\alpha} = e^{t (\basis^i; \basis^j; \alpha - \sigma \mathbf{1}_d) \cdot (w_1;w_2;w_3)}
\end{align*}
for each $i$, $j$, and $\alpha$. The inequality \eqref{meansneg} implies that this exponential decays to zero whenever $\partial^{\alpha} P'_{ij}(0) \neq 0$. Thus by the previous corollary, $|||P|||_\sigma = 0$.

For the converse, if $D_1',D_2',D_3'$ as described in the previous corollary exist so that $|\det e^{t D_3'}|^{-\sigma} \rho_{(e^{tD_1'},e^{tD_2'},e^{tD_3'})} P' \rightarrow 0$ as $t \rightarrow \infty$, letting $w_1,w_2,$ and $w_3$ be vectors whose coordinates agree with the diagonal entries of $D_1, D_2,$ and $D_3$, respectively, similarly implies that
\[ (w_1;w_2;w_3) \cdot \left( \basis^i; \basis^j; \alpha - \sigma \mathbf{1}_d \right) < 0 \]
whenever $\partial^{\alpha} P'_{ij}(0) \neq 0$, meaning that
\begin{equation} (w_1;w_2;w_3) \cdot \left( \basis^i; \basis^j; \alpha  \right) <  (w_1;w_2;w_3) \cdot \left( p^{-1} \mathbf{1}_p; q^{-1} \mathbf{1}_q; \sigma \mathbf{1}_d \right) \label{almostdone} \end{equation}
because $D_1'$ and $D_2'$ are traceless. If $\left( p^{-1} \mathbf{1}_p; q^{-1} \mathbf{1}_q; \sigma \mathbf{1}_d \right)$ happened to belong to $\mathcal{N}(P')$, one could take an appropriate convex linear combination of the inequalities \eqref{almostdone} to deduce that
\[ (w_1;w_2;w_3) \cdot \left( p^{-1} \mathbf{1}_p; q^{-1} \mathbf{1}_q; \sigma \mathbf{1}_d \right) < (w_1;w_2;w_3) \cdot \left( p^{-1} \mathbf{1}_p; q^{-1} \mathbf{1}_q; \sigma \mathbf{1}_d \right), \]
which is clearly impossible.
\end{proof}

From a computational standpoint, the practical work of determining whether or not $|||P|||_{\sigma}$ is zero for a given $P$ and $\sigma$ is difficult and there are few known shortcuts. The proposition below establishes perhaps one of the most simplest criteria by which it may be deduced that $|||P|||_{\sigma}$ is strictly positive.
\begin{proposition}
Suppose that $P(z)$ is a $p \times q$ matrix whose entries are polynomials of degree at most $D$ in the variable $z \in \R^{d}$. If $P$ satisfies the equations
\begin{align}
\sum_{j=1}^q \sum_{|\alpha|\leq D}  \frac{1}{\alpha!} \left[ \partial^\alpha P_{i j}(z) \partial^\alpha P_{i'j} (z)|_{z=0} \right] = p^{-1} ||P||^2 \delta_{ii'}, \label{foc1} \\
\sum_{i=1}^p \sum_{|\alpha|\leq D} \frac{1}{\alpha!} \left[ \partial^\alpha P_{ij}(z)  \partial^\alpha P_{ij'}(z)|_{z=0} \right] = q^{-1} ||P||^2 \delta_{jj'}, \label{foc2} \\
\sum_{i=1}^p \sum_{j=1}^q \sum_{\substack{|\alpha|,|\alpha'| \leq D\\ \alpha + \basis^{k'} = \alpha' + \basis^k}} \sqrt{ \frac{\alpha_{k} \alpha'_{k'}}{\alpha! \alpha'!}} \left[ \partial^\alpha P_{ij} (z) \partial^{\alpha'} P_{ij} (z) |_{z=0} \right]  = \sigma ||P||^2 \delta_{kk'}, \label{foc3}
\end{align}
where $\delta$ is the Kronecker $\delta$, then $|||P|||_\sigma = ||P||$. Conversely, if the infimum \eqref{gitnorm} is attained for some triple $(A,B,C)$, then the matrix $\tilde P := |\det C|^{-\sigma} \rho_{(A,B,C)} P$ satisfies \eqref{foc1}--\eqref{foc3}. \label{critprop}
\end{proposition}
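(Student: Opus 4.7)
The plan is to view this proposition as a real-analytic incarnation of the Kempf-Ness principle: the three identities \eqref{foc1}--\eqref{foc3} will turn out to be precisely the vanishing of the gradient of the smooth function $F(A,B,C) := |\det C|^{-2\sigma}\,\|\rho_{(A,B,C)} P\|^2$ at $(A,B,C)=(I,I,I)$, and the analytic ingredient that promotes this criticality to global minimality is the convexity of $\log F$ along one-parameter subgroups generated by symmetric Lie-algebra elements.

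To identify \eqref{foc1}--\eqref{foc3} as first-order conditions, I would differentiate $F(e^{tX},e^{tY},e^{tZ})$ at $t=0$. The $X$-piece, after a Leibniz expansion, equals $2\sum_{i,k}(X)_{ik} M_{ik}$ with $M_{ik}:=\sum_{j,\alpha}(\alpha!)^{-1}\partial^\alpha P_{ij}(0)\partial^\alpha P_{kj}(0)$; since $M$ is symmetric in $(i,k)$, its annihilation by all traceless $X \in \mathfrak{sl}(p)$ is equivalent to $M$ being a scalar multiple of the identity, and taking the trace pins that scalar to $p^{-1}\|P\|^2$, which is exactly \eqref{foc1}. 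The $Y$-derivative yields \eqref{foc2} by the identical mechanism. For $Z=E_{kk'}$ the chain rule combined with $\partial^\alpha(z_{k'}g)|_{z=0}=\alpha_{k'}\partial^{\alpha-\basis^{k'}}g(0)$ produces the derivative
\[ 2\sum_{i,j,\alpha}\frac{\alpha_k}{\alpha!}\partial^\alpha P_{ij}(0)\,\partial^{\alpha-\basis^k+\basis^{k'}}P_{ij}(0)\;-\;2\sigma \delta_{kk'}\|P\|^2, \]
and the elementary identity $\alpha_k/\alpha! = \alpha'_{k'}/\alpha'!$, valid whenever $\alpha+\basis^{k'}=\alpha'+\basis^k$, rewrites the coefficient in the symmetric form $\sqrt{\alpha_k\alpha'_{k'}/(\alpha!\alpha'!)}$ appearing in \eqref{foc3}.

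For the convexity ingredient, when $X, Y, Z$ are diagonal one has the explicit formula
\[ \|\rho_{(e^{tX},e^{tY},e^{tZ})}P\|^2 \;=\; \sum_{i,j,\alpha}\frac{|\partial^\alpha P_{ij}(0)|^2}{\alpha!}\, e^{2t[X_{ii}+Y_{jj}+Z\cdot\alpha]}, \]
so $\log F(e^{tX},e^{tY},e^{tZ})$ is the sum of a linear function of $t$ and the logarithm of a positive sum of exponentials, hence convex in $t$. General symmetric Lie-algebra elements reduce to this diagonal case via the singular value decomposition together with the orthogonal invariance of $\|\cdot\|$ supplied by Proposition \ref{proportho}. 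Applying all of this to $P$ satisfying \eqref{foc1}--\eqref{foc3} gives a convex $\varphi(t):=\log F(e^{tX},e^{tY},e^{tZ})$ with $\varphi'(0)=0$, so $\varphi(1)\geq \varphi(0)$, and pulling back through SVD yields $|\det C|^{-\sigma}\|\rho_{(A,B,C)}P\|\geq \|P\|$ for every $(A,B,C)$; the reverse inequality is immediate from $(A,B,C)=(I,I,I)$, establishing $|||P|||_\sigma=\|P\|$. An auxiliary but essential step in this SVD reduction is verifying that \eqref{foc1}--\eqref{foc3} are themselves invariant under the left orthogonal action, which follows from a short computation using the orthogonal invariance of the Bargmann-type inner product $\langle f,g\rangle:=\sum_\alpha(\alpha!)^{-1}\partial^\alpha f(0)\partial^\alpha g(0)$ underlying \eqref{stdnorm}.

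The converse direction is then essentially free: if the infimum in \eqref{gitnorm} is attained at $(A,B,C)$, then \eqref{groupprop} yields $|||\tilde P|||_\sigma = \|\tilde P\|$, so the smooth function $(A',B',C')\mapsto |\det C'|^{-\sigma}\|\rho_{(A',B',C')}\tilde P\|$ achieves its global minimum at $(I,I,I)$, forcing its gradient there to vanish; by the first-order computation of the second paragraph, this vanishing is exactly the system \eqref{foc1}--\eqref{foc3} applied to $\tilde P$. The main technical nuisance I anticipate is the combinatorial bookkeeping in the $Z$-derivative---in particular the multi-index identity that recasts $\alpha_k/\alpha!$ into its symmetric square-root form---while the genuine analytic content of the whole proposition is concentrated in the log-sum-exp convexity.
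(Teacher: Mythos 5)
Your proposal is correct and follows essentially the same route as the paper's proof: compute the first-order conditions for the functional $(A,B,C)\mapsto|\det C|^{-\sigma}\|\rho_{(A,B,C)}P\|$ at the identity, observe that they reduce precisely to \eqref{foc1}--\eqref{foc3} via the symmetric-matrix diagonalization argument, then globalize using log-sum-exp convexity along diagonal one-parameter subgroups together with the SVD and the orthogonal invariance of \eqref{stdnorm}. The paper organizes the same ideas slightly differently (starting from the convexity and the critical-point equation \eqref{cpeq} over all orthogonal conjugates, then reducing to \eqref{foc1}--\eqref{foc3}), but the content is identical, including the role of Proposition \ref{proportho} in handling the $O_3$-dependence underlying your "orthogonal invariance of the Bargmann-type inner product."
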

\begin{proof}
By the Singular Value Decomposition, every matrix $A$ admits a decomposition of the form $O_1' e^{D_1} O_1$, where $O_1$ and $O_1'$ are orthogonal and $D_1$ is a diagonal matrix. If $|\det A| = 1$, the matrix $D_1$ must furthermore be traceless so that $|\det O_1' e^{D_1} O_1| = 1$ as well. Since $||\rho_{(O_1,O_2,O_3)} P|| = ||P||$, it follows that
\begin{align*} |\det C|^{-2 \sigma} &  || \rho_{(A,B,C)} P ||^2  = e^{-2 \sigma D_3 \cdot \mathbf{1}_d} || \rho_{(O_1',O_2',O_3')} \rho_{(e^{D_1},e^{D_2},e^{D_3})} \rho_{(O_1,O_2,O_3)} P||^2 \\
 & = e^{-2 t \sigma D_3 \cdot \mathbf{1}_d}\left. || \rho_{(e^{tD_1},e^{tD_2},e^{tD_3})} \rho_{(O_1,O_2,O_3)} P||^2 \right|_{t=1} \\
 & = \left. \sum_{i,j,\alpha} \frac{ \left| e^{t (D_{1i} + D_{2j} + D_3 \cdot (\alpha-\sigma \mathbf{1}_d))}\partial^\alpha [\rho_{(O_1,O_2,O_3)} P]_{ij}(z) |_{z=0}\right|^2}{\alpha!} \right|_{t=1},
 \end{align*}
where $D_{ki}$ denotes the $i$-th diagonal element of $D_k$ and $D_3 \cdot (\alpha-\sigma \mathbf{1}_d) = \sum_{j} D_{3j} (\alpha_j - \sigma)$. To simplify notation, let $m_{ij\alpha}(t) := e^{t (D_{1i} + D_{2j} + D_3 \cdot (\alpha-\sigma \mathbf{1}_d))}$ and let $\tilde P_{ij}(z) := [\rho_{(O_1,O_2,O_3)}P]_{ij}(z)$. Now
\begin{align*}
\frac{d}{dt} & \sum_{i,j,\alpha} \frac{(m_{ij\alpha}(t))^2}{\alpha!} | \partial^{\alpha} \tilde P_{ij}(z) |_{z=0}|^2 = \\
& \sum_{i,j,\alpha}  \frac{2(D_{1i} + D_{2j} + D_3 \cdot (\alpha-\sigma \mathbf{1}_d)) m_{ij\alpha}(2t)}{\alpha!} | \partial^{\alpha}  \tilde P_{ij}(z) |_{z=0}|^2, \\
\frac{d^2}{dt^2} &\sum_{i,j,\alpha} \frac{(m_{ij\alpha}(t))^2}{\alpha!} | \partial^{\alpha} \tilde P_{ij}(z) |_{z=0}|^2 =  \\
& \sum_{i,j,\alpha} \frac{4(D_{1i} + D_{2j} + D_3 \cdot (\alpha-\sigma \mathbf{1}_d))^2 m_{ij\alpha}(2t)}{\alpha!} | \partial^{\alpha}  \tilde P_{ij}(z) |_{z=0}|^2  \geq 0.
\end{align*}
An immediate consequence is that the function 
\[ |\det e^{t D_3}|^{-2 \sigma} ||\rho_{(e^{t D_1},e^{tD_2},e^{tD_3})} \rho_{(O_1,O_2,O_3)} P||^2 \]
 is a convex function of $t$ and therefore any critical point is necessarily a global minimum. If $P$ happens to have the property that
\begin{equation} \sum_{i,j,\alpha}  \frac{(D_{1i} + D_{2j} + D_3 \cdot (\alpha-\sigma \mathbf{1}_d)}{\alpha!} | \partial^{\alpha} [\rho_{(O_1,O_2,O_3)} P]_{ij}(z) |_{z=0}|^2 = 0 \label{cpeq} \end{equation}
for all traceless diagonal $D_1,D_2$, all diagonal $D_3$, and all orthogonal $O_1,O_2,O_3$, then one will have that the function $|\det C|^{-\sigma} ||\rho_{(A,B,C)} P||$ attains a minimum at the identity and consequently $|||P|||_{\sigma} = ||P||$. Likewise, if the minimum \eqref{gitnorm} is attained for some triple $(A,B,C)$, then the matrix $|\det C|^{-\sigma} \rho_{(A,B,C)} P$ has a critical point at the identity (because the infimum \eqref{gitnorm} applied to this matrix will attain its infimum at the identity). 

The equation \eqref{cpeq} is equivalent to the system
\begin{align}
\sum_{i,j,\alpha} \frac{\basis^i-p^{-1} \mathbf{1}_p}{\alpha!}  | \partial^{\alpha} [\rho_{(O_1,O_2,O_3)} P]_{ij}(z) |_{z=0}|^2 = 0,\label{cpeq1} \\
\sum_{i,j,\alpha} \frac{\basis^j - q^{-1} \mathbf{1}_q }{\alpha!} | \partial^{\alpha} [\rho_{(O_1,O_2,O_3)} P]_{ij}(z) |_{z=0}|^2 = 0,\label{cpeq2} \\
\sum_{i,j,\alpha} \frac{\alpha - \sigma \mathbf{1}_d}{\alpha!} | \partial^{\alpha} [\rho_{(O_1,O_2,O_3)} P]_{ij}(z) |_{z=0}|^2 = 0, \label{cpeq3} 
\end{align}
where the $\basis^{i}$ and $\basis^j$ are standard basis vectors. Equivalence follows from the fact that the left-hand side of \eqref{cpeq} is simply a sum of dot products: the dot product of diagonal elements of $D_1$ with \eqref{cpeq1}, the dot product of diagonal elements of $D_2$ with \eqref{cpeq2}, and the dot product of diagonal elements of $D_3$ with \eqref{cpeq3}. For this reason, \eqref{cpeq1}--\eqref{cpeq3} immediately imply \eqref{cpeq}. On the other hand, if any one of \eqref{cpeq1}--\eqref{cpeq3} failed, a suitable diagonal matrix could be constructed to violate \eqref{cpeq} as well (simply let the corresponding $D_i$ have entries on the diagonal which give a nonzero value when dotted with any one of \eqref{cpeq1}--\eqref{cpeq3} which happens not to be zero; since every vector $\basis^i - p^{-1} \mathbf{1}_p$ in \eqref{cpeq1} and $\basis^j - q^{-1} \mathbf{1}_q$ in \eqref{cpeq2} has entries which sum to zero, subtracting off a suitable constant from each diagonal entry of $D$ can ensure that it is traceless without changing the fact that its dot product with either \eqref{cpeq1} or \eqref{cpeq2} is nonzero).

The system \eqref{cpeq1}--\eqref{cpeq3} further simplifies upon observing that the left-hand side of \eqref{cpeq1} is independent of $O_2$ and $O_3$, and likewise \eqref{cpeq2} depends only on $O_2$ and \eqref{cpeq3} depends only on $O_3$. But then
\begin{align} \sum_{i,j,\alpha} & \frac{\basis^i - p^{-1} \mathbf{1}_p}{\alpha!} \left| \partial^{\alpha} P(O_1^{T} \basis^i,\basis^j,z)|_{z=0}\right|^2 \nonumber \\ 
= & \sum_{i,i',i''} (\basis^i-p^{-1} \mathbf{1}_p) (O_1)_{ii'} (O_1)_{ii''}\sum_{j,\alpha} \frac{ \partial^{\alpha} P_{i'j}( z)  \partial^{\alpha} P_{i''j} (z) |_{z=0} }{\alpha!}. \label{diageigen} 
\end{align}
Certainly \eqref{diageigen} will be zero for all orthogonal $O_1$ if
\begin{equation} \sum_{j,\alpha}  \frac{1}{\alpha!} \partial^{\alpha} P_{i'j}(z)  \partial^{\alpha} P_{i''j}(z)|_{z=0} = \frac{||P||^2}{p} \delta_{i'i''},\label{diagonalize} \end{equation}
and if this condition does not hold, since the left-hand side of \eqref{diagonalize} is self-adjoint when regarded as a matrix indexed by pairs $(i',i'')$, it will always be possible to choose a matrix $O_1$ diagonalizing it so that for this $O_1$, \eqref{diageigen} equals
\[ \sum_{i} (\basis^i - p^{-1}\mathbf{1}_p) \lambda_i = (\lambda_1,\ldots,\lambda_p) - \overline{\lambda} \mathbf{1}_p \neq 0 \]
where $\lambda_1,\ldots,\lambda_p$ are the eigenvalues of the left-hand side of \eqref{diagonalize} and $\overline{\lambda}$ is the average of the eigenvalues. In particular, note that the sum of the eigenvalues must always equal $||P||^2$, so indeed \eqref{diageigen} is nonzero for this $O_1$ whenever the eigenvalues are not all equal to $p^{-1} ||P||^2$.
The same argument establishes that \eqref{cpeq2} holds for all $O_1,O_2,O_3$ exactly when
\begin{equation}
\sum_{i,\alpha} \frac{1}{\alpha!} \partial^\alpha P_{ij'}(0) \partial^\alpha P_{ij''}(0) = \frac{||P||^2}{q} \delta_{j'j''}. \label{cpeq15}
\end{equation}

Understanding the condition \eqref{cpeq3} requires a bit more work. The first step is essentially to repeat the argument of Proposition \ref{proportho} to express the left-hand side of \eqref{cpeq3} in a way that reduces and clarifies its dependence on $O_3$. First observe that
\begin{align*}
\sum_{i,j,|\alpha|=m} & \frac{\alpha}{\alpha!} |\partial^\alpha P_{ij} (O_3^T z)|_{z=0}|^2  \\
 & = \frac{1}{m!} \sum_{i,j,k_1,\ldots,k_m} (\basis^{k_1} + \cdots + \basis^{k_m}) |\partial_{k_1} \cdots \partial_{k_m} P_{ij}(O_3^T z)|_{z=0}|^2 \\
& = \frac{1}{(m-1)!} \sum_{i,j,k_1,\ldots,k_m} \basis^{k_1}  |\partial_{k_1} \cdots \partial_{k_m} P_{ij}(O_3^T z)|_{z=0}|^2
\end{align*}
because there are exactly $m!/\alpha!$ tuples $(k_1,\ldots,k_m)$ such that $\partial^{m}_{k_1\cdots k_m} = \partial^{\alpha}$, and for each such tuple, $\alpha = \basis^{k_1} + \cdots + \basis^{k_m}$. By symmetry of this sum in the parameters $k_1,\ldots,k_m$, one can retain only the $\basis^{k_1}$ term of the sum at the cost of multiplying the entire expression by a factor of $m$. Continuing along the same lines of Proposition \ref{proportho} (where we write $O_3$ as just $O$ to simplify cumbersome notation),
\begin{align*}
\frac{1}{(m-1)!} & \sum_{i,j,k_1,\ldots,k_m} \basis^{k_1}  |\partial_{k_1} \cdots \partial_{k_m} P_{ij}(O^T z)|_{z=0}|^2 \\
= &\frac{1}{(m-1)!} \sum_{k_1,k_1',k_1''} \basis^{k_1} O_{k_1 k_1'} O_{k_1 k_1''} \\
&   \qquad \qquad \cdot \sum_{\substack{i,j,k_2,\ldots,k_m\\k_2',\ldots,k_m'\\k_2'',\ldots,k_m''}} \left[ \prod_{\ell=2}^m O_{k_\ell k_\ell''} O_{k_\ell,k_\ell'} \right] \partial^m_{k_1' \cdots k_m'} P_{ij}(z) \partial^m_{k_1'' \cdots k_m''} P_{ij}(z) |_{z=0} \\
 = & \frac{1}{(m-1)!} \sum_{k_1,k_1',k_1''} \basis^{k_1} O_{k_1 k_1'} O_{k_1 k_1''}  \\
& \qquad \qquad \cdot \sum_{i,j,k_2,\ldots,k_m} \partial_{k_1'} \partial^{m-1}_{k_2 \cdots k_m} P_{ij}(z) \partial_{k_1''} \partial^{m-1}_{k_2 \cdots k_m} P_{ij}(z) |_{z=0} \\
= & \sum_{k_1,k_1',k_1''} \basis^{k_1} O_{k_1 k_1'} O_{k_1 k_1''} \sum_{i,j,|\beta|={m-1}} \frac{\partial_{k_1'} \partial^{\beta} P_{ij}(z) \partial_{k_1''} \partial^{\beta} P_{ij}(z) |_{z=0}}{\beta!}.
\end{align*}
By the same reasoning used for \eqref{cpeq1} and \eqref{cpeq2}, the condition \eqref{cpeq3} will hold for all $O_1,O_2,O_3$ precisely when
\[  \sum_{i,j,|\beta|\leq{D-1}} \frac{\partial_{k_1'} \partial^{\beta} P_{ij}(z) \partial_{k_1''} \partial^{\beta} P_{ij}(z) |_{z=0}}{\beta!} = \sigma ||P||^2 \delta_{k_1'k_1''}.\]
Now the multiindices $\beta$ with $|\beta| = m-1$ are in bijection with those multiindices $\alpha$ with $|\alpha| = m$ and $\alpha_{k_1'} > 0$ (since we may map $\alpha$ to $\alpha - \basis^{k_1}$ and the map is clearly a bijection). Since $1/\beta! = \alpha_{k_1'} / \alpha!$ in this correspondence, it follows that
\begin{equation*}
\begin{split}\sum_{\substack{i,j\\|\beta| \leq D-1}}& \frac{\left[ \partial_{k_1'} \partial^\beta P_{ij}(z) \right] \left[  \partial_{k_1''} \partial^\beta P_{ij} (z)\right]|_{z=0}}{\beta!}
\\ & = \sum_{i,j, \alpha,\alpha'} \sqrt{ \frac{\alpha_{k_1'} \alpha'_{k_1''}}{\alpha! \alpha'!}} \partial^\alpha P_{ij} (0) \partial^{\alpha'} P_{ij} (0) \delta_{\alpha + \basis^{k_2''} = \alpha' + \basis^{k_1'}}, 
\end{split}
\end{equation*}
which establishes the final condition \eqref{foc3}. (Note that the final sum is taken over all $|\alpha| \leq D$ since any $\alpha$ with $\alpha_{k'_1} = 0$ or $\alpha_{k_1''} = 0$ do not contribute to the sum.)
\end{proof}

We come now to the main and final result of this section, which gives a simple procedure to determine for sufficiently sparse matrices $P$ when $|||P|||_{\sigma} > 0$.
\begin{lemma}
Suppose that $P$ is a $p \times q$ matrix whose entries are real polynomials of degree at most $D$ in the variable $z \in \R^d$. Suppose also that $P$ has the following properties: \label{standardform}
\begin{enumerate}
\item For each multiindex $\alpha$ with $|\alpha| \leq D$, the matrix $\partial^\alpha P(0)$ has at most one nonzero entry in each row and each column.
\item For each pair $(i,j)$, let $E_{ij}$ be the collection of those multiindices $\alpha$ such that $\partial^\alpha P_{ij}(0) \neq 0$. Then for all $\alpha,\alpha' \in E_{ij}$, if there exist $k, k' \in \{1,\ldots,d\}$ such that $\alpha + \basis^{k'} = \alpha' + \basis^k$, it must be the case that $\alpha = \alpha'$.
\item Let $E$ be the set of $(i,j,\alpha)$ such that $\partial^{\alpha} P_{ij}(0) \neq 0$. For each $(i,j,\alpha) \in E$,  there is a nonnegative real number $\theta_{ij\alpha}$ such that $\sum_{(i,j,\alpha) \in E} \theta_{ij\alpha} = 1$ and 
\begin{equation} \sum_{(i,j,\alpha) \in E} \theta_{ij\alpha} (\basis^i;\basis^j;\alpha) = \left( p^{-1} \mathbf{1}_p; q^{-1} \mathbf{1}_q; \sigma \mathbf{1}_d \right).\label{convex} \end{equation}
\end{enumerate}
Then 
\[ |||P|||_\sigma = \inf_{A  \in \SL(p,\R)} \inf_{B \in \SL(q,\R)} \inf_{C \in \GL(d,\R)} |\det C|^{-\sigma} ||\rho_{(A,B,C)} P|| > 0. \] 
If all $\theta_{ij\alpha}$ are all strictly positive, then the infimum is attained for some diagonal matrices $A,B,$ and $C$. Note that if \eqref{convex} fails to hold for any suitable choices of $\theta_{ij\alpha}$, then $|||P|||_{\sigma} = 0$ by Corollary \ref{newtoncor}.
\end{lemma}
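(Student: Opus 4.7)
The plan is to invoke Proposition \ref{critprop}: if some rescaling $\tilde P := |\det C|^{-\sigma}\rho_{(A,B,C)}P$ can be found that satisfies \eqref{foc1}--\eqref{foc3}, then $|||\tilde P|||_\sigma = ||\tilde P||$, and the group identity \eqref{groupprop} gives $|||P|||_\sigma = ||\tilde P||$, which is positive as long as $\tilde P \ne 0$. The search will be restricted to diagonal $A, B, C$ with positive entries and $\det A = \det B = 1$, so that $\tilde P$ has exactly the same zero pattern as $P$. Hypothesis (1) then makes every off-diagonal term in \eqref{foc1} and \eqref{foc2} vanish identically, and hypothesis (2) makes every cross ($k \ne k'$) term in \eqref{foc3} vanish as well (the only way to satisfy $\alpha + \basis^{k'} = \alpha' + \basis^k$ with $\alpha, \alpha' \in E_{ij}$ forces $\alpha = \alpha'$ and hence $k = k'$). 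Writing $\tilde w_{ij\alpha} := (\alpha!)^{-1}|\partial^\alpha \tilde P_{ij}(0)|^2$, what remains of \eqref{foc1}--\eqref{foc3} is
\[ \sum_{j,\alpha}\tilde w_{ij\alpha} = p^{-1}||\tilde P||^2, \qquad \sum_{i,\alpha}\tilde w_{ij\alpha} = q^{-1}||\tilde P||^2, \qquad \sum_{i,j,\alpha}\alpha_k\tilde w_{ij\alpha} = \sigma||\tilde P||^2. \]

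Parametrizing $A, B, C$ by log-diagonals $(r, s, t) \in \R^p\times\R^q\times\R^d$, one computes $||\tilde P||^2 = f(r,s,t)$ with
\[ f(r,s,t) := \sum_{(i,j,\alpha)\in E} w_{ij\alpha}\exp\bigl(2(r_i + s_j + (\alpha - \sigma\mathbf{1}_d)\cdot t)\bigr), \quad w_{ij\alpha} := (\alpha!)^{-1}|\partial^\alpha P_{ij}(0)|^2. \]
A routine Lagrange-multiplier calculation under the constraints $\sum_i r_i = \sum_j s_j = 0$ identifies the three-equation system above with the first-order optimality conditions for $f$; since $f$ is a convex, positive sum of exponentials, the existence of a critical point is equivalent to $f$ attaining its infimum on the constraint hyperplane, and any such critical point is a global minimizer.

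The crux is attainment. When all $\theta_{ij\alpha} > 0$, any direction $(r,s,t)$ with $\sum r_i = \sum s_j = 0$ along which $f$ remains bounded must satisfy $r_i + s_j + (\alpha - \sigma\mathbf{1}_d) \cdot t \le 0$ on $E$; taking the convex combination of these non-positive quantities using the weights $\theta_{ij\alpha}$ produces $0$ on the right-hand side (by \eqref{convex} and the constraint), forcing each inequality to be an equality. Hence $f$ is constant along the direction, so it has no direction of strict decrease and its infimum is attained. This produces the diagonal $(A,B,C)$ required in the final sentence of the lemma and, in particular, shows $|||P|||_\sigma = ||\tilde P|| > 0$.

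For the general case (some $\theta_{ij\alpha}=0$), I would argue by passage to a limit. Let $E^* \subseteq E$ be the union of the supports of all feasible $\theta$; averaging yields a single $\theta^*$ supported on exactly $E^*$ with all $\theta^*_{ij\alpha} > 0$, and any $e \in E \setminus E^*$ fails to lie in the affine hull of $E^*$ (otherwise a small perturbation of $\theta^*$ would be feasible with $e$ in its support). A separating-hyperplane argument therefore produces $(r^*, s^*, t^*)$ with $\sum_i r^*_i = \sum_j s^*_j = 0$, satisfying $r^*_i + s^*_j + (\alpha - \sigma\mathbf{1}_d)\cdot t^* = 0$ on $E^*$ and $<0$ on $E \setminus E^*$. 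Rescaling along $\exp(n(r^*, s^*, t^*))$ produces $\tilde P_n$ whose entries are invariant on $E^*$ and decay exponentially on $E \setminus E^*$, so $\tilde P_n$ converges in the norm \eqref{stdnorm} to a $P_\infty$ supported on $E^*$ which satisfies hypotheses (1), (2), and (3) with the strictly positive weights $\theta^*$. By \eqref{groupprop} and Lemma \ref{continuity}, $|||P|||_\sigma = \lim_n |||\tilde P_n|||_\sigma = |||P_\infty|||_\sigma$, and the previous case applied to $P_\infty$ gives $|||P_\infty|||_\sigma > 0$. The main obstacle is the attainment argument for $f$ via the positive-weight convex-combination trick; everything else is either routine calculation or a standard continuity/limit argument.
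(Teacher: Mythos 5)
Your proof follows essentially the same skeleton as the paper's: a diagonal rescaling is sought that turns $P$ into a matrix $\tilde P$ satisfying the first--order conditions \eqref{foc1}--\eqref{foc3}, these conditions reduce (thanks to hypotheses (1) and (2)) to three scalar balancing equations, and attainment of the minimum of the log-convex function $f$ on the traceless constraint hyperplane is the crux. Your strict-positivity argument is correct in spirit, though you should be a bit more careful about passing from ``no direction of strict decrease'' to ``the infimum is attained'': the clean way is to quotient out the kernel of the exponent map (directions along which $f$ is constant), observe that on the quotient every ray tends to $+\infty$, and invoke uniform coercivity (via compactness of the unit sphere) exactly as the paper does. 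The genuine difference from the paper is your treatment of the degenerate case: you reduce in a single step to a $P_\infty$ supported on $E^*$, whereas the paper inducts, peeling off at least one index with $\theta_{ij\alpha}=0$ at each stage.

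There is a gap in your one--step reduction. You observe that every individual $e_0 \in E\setminus E^*$ lies outside $\operatorname{aff}\{V_e : e\in E^*\}$ and then assert that a separating--hyperplane argument ``therefore'' produces a single $(r^*,s^*,t^*)$ that vanishes on $E^*$ and is strictly negative on all of $E\setminus E^*$. The pointwise fact you proved does not imply this: two points of $E\setminus E^*$ could a priori lie on opposite sides of $\operatorname{aff}(E^*)$, and then no single functional can be strictly negative at both. What you actually need, and what is true, is that the whole convex hull $\operatorname{conv}\{V_{e_0} : e_0\in E\setminus E^*\}$ is disjoint from $\operatorname{aff}\{V_e : e\in E^*\}$; this follows by running your own perturbation argument on an arbitrary convex combination $\sum \mu_{e_0} V_{e_0}$ rather than a single $V_{e_0}$ (a feasible perturbation of $\theta^*$ with mass on $E\setminus E^*$ would again contradict the definition of $E^*$). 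Once the convex hull is separated from the affine hull, strict separation produces the desired functional and the rest of your argument goes through. The paper's inductive formulation is engineered precisely to avoid needing simultaneous strict separation: it uses an arbitrary $\tilde w$ that is nonpositive on $E$ and strictly negative somewhere, which only requires failure of coercivity rather than any structural fact about $E^*$.
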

\begin{proof}
Consider the vector space $W$ of vectors $w := (w_p;w_q;w_d) \in \R^p \times \R^q \times \R^d$ such that $w_p \cdot \mathbf{1}_p = w_q \cdot \mathbf{1}_q = 0$. Let $w^1,\ldots,w^N$ be any maximal collection of linearly independent vectors in $W$ with the property that the span of $w^1,\ldots,w^N$ contains no nontrivial vectors $w = (w_p;w_q;w_d)$ such that $w_p \cdot \basis^i + w_q \cdot \basis^j + w_d \cdot (\alpha-\sigma \mathbf{1}_d) = 0$ for all $(i,j,\alpha) \in E$. Let the span of $w^1,\ldots,w^N$ be called $\tilde W$. If $N=0$, i.e., if every $w \in W$ satisfies $w \cdot (\basis^i;\basis^j; \alpha - \sigma \mathbf{1}_d) = 0$ for all $(i,j,\alpha) \in E$, then let $\tilde W := \{0\}$. When $\tilde W$ is nontrivial, every $w \in W$ admits a unique $\tilde w \in \tilde W$  such that $w - \tilde w$ vanishes when dotted with $(\basis^i;\basis^j;\alpha-\sigma \mathbf{1}_d)$ for each $(i,j,\alpha) \in E$. This implies that 
\begin{equation} \sum_{(i,j,\alpha) \in E} \! e^{2 w \cdot (\basis^i;\basis^j;\alpha-\sigma \mathbf{1}_d)} \frac{|\partial^\alpha P_{ij}(0)|^2}{\alpha!} = \! \! \sum_{(i,j,\alpha) \in E} \! e^{2 \tilde{w} \cdot (\basis^i;\basis^j;\alpha-\sigma \mathbf{1}_d)} \frac{|\partial^\alpha P_{ij}(0)|^2}{\alpha!} \label{genscale} \end{equation}
for each $w \in W$. A consequence is that if the right-hand side of \eqref{genscale} attains a minimum as $\tilde w$ ranges over $\tilde W$, then the left-hand side must also attain a minimum as $w$ ranges over $W$. If $\tilde W = \{0\}$, then the left-hand side of \eqref{genscale} is simply independent of $w$ and consequently it trivially attains a minimum for any $w \in W$.

We first establish a fundamental sufficiency criterion: if the left-hand side of \eqref{genscale} attains a global minimum as a function of $w \in W$, then it must be the case that $|||P|||_\sigma > 0$ with the infimum in the definition of $|||P|||_\sigma$ being attained for some diagonal matrices $A, B$, and $C$. To see this, Let $w$ denote any vector at which the minimum is attained. This $w$ splits into a triple $(w_p;w_q;w_d)$, and differentiating any one of $w_p$ or $w_q$ in the direction $\basis^{k} - \basis^{k'}$ for $k \neq k'$ and differentiating $w_d$ in any direction $\basis^k$ gives that
\begin{align*} 0 & = \sum_{(i,j,\alpha) \in E} \basis^i \cdot( \basis^k - \basis^{k'}) e^{2 w \cdot (\basis^i;\basis^j;\alpha-\sigma \mathbf{1}_d)} \frac{|\partial^\alpha P_{ij}(0)|^2}{\alpha!} \\ & = \sum_{(i,j,\alpha) \in E} \basis^j  \cdot( \basis^k - \basis^{k'})e^{2 w \cdot (\basis^i;\basis^j;\alpha-\sigma \mathbf{1}_d)} \frac{|\partial^\alpha P_{ij}(0)|^2}{\alpha!} \\ & = \sum_{(i,j,\alpha) \in E} (\alpha-\sigma \mathbf{1}_d) \cdot \basis^k e^{2 w \cdot (\basis^i;\basis^j;\alpha-\sigma \mathbf{1}_d)} \frac{|\partial^\alpha P_{ij}(0)|^2}{\alpha!}. \end{align*}
Let 
\[ \tilde P_{ij}(z_1,\ldots,z_d) := e^{- \sigma w_d \cdot \mathbf{1}_d} e^{(w_p)_i} e^{(w_q)_j} P_{ij} (e^{(w_d)_1} z_1,\ldots,e^{(w_d)_d} z_d). \]
(Here $(v)_i$ indicates the $i$-th coordinate of the vector $v$ in the standard basis.)
This $\tilde P$ may be understood to exactly equal $|\det D_3|^{-\sigma} \rho_{(D_1,D_2,D_3)} P$, where $D_1,D_2,D_3$ are diagonal matrices whose entries are the exponentials of the coordinates of $w_p,w_q,$ and $w_d$, respectively. The critical point equations just established require that there must exist real $\lambda_1$ and $\lambda_2$ such that
\[ \sum_{j,\alpha} \frac{|\partial^\alpha \tilde P_{ij}(z)|^2}{\alpha!} = \lambda_1 \text{ for each } i \in \{1,\ldots,p\}, \]
\[ \sum_{i,\alpha} \frac{|\partial^\alpha \tilde P_{ij}(z)|^2}{\alpha!} = \lambda_2 \text{ for each } j \in \{1,\ldots,q\}, \text{ and }\]
\[ \sum_{i,j,\alpha} \alpha \frac{|\partial^\alpha \tilde P_{ij}(0)|^2}{\alpha!} = \sigma ||\tilde P||^2 \mathbf{1}_d. \]
By summing, it is easy to check that $\lambda_1$ must equal $p^{-1} ||\tilde P||^2$ and $\lambda_2$ must equal $q^{-1} ||\tilde P||^2$.
But the hypotheses of this lemma guarantee that
\[ \sum_{j,\alpha} \frac{1}{\alpha!} [ \partial^\alpha \tilde P_{ij}(0) ] [ \partial^{\alpha} \tilde P_{i'j}(0) ] = 0 \]
when $i \neq i'$ because each column $j$ of each matrix $\partial^\alpha P(0)$ admits at most one row $i$ in which it is nonzero. Likewise
\[ \sum_{i,\alpha} \frac{1}{\alpha!} [ \partial^\alpha \tilde P_{ij}(0)] [ \partial^{\alpha} \tilde P_{ij'}(0) ] = 0 \]
when $j \neq j'$. Lastly, $[\partial^\alpha \tilde P_{ij}(0)] [\partial^{\alpha'} \tilde P_{ij}(0)] \delta_{\alpha + \basis^{k'} = \alpha' + \basis^k}$ is identically zero as a function of $i$ and $j$ when $\alpha \neq \alpha'$. Thus the equations \eqref{foc1}--\eqref{foc3} are satisfied for $\tilde P$ and $|||P|||_\sigma = ||| \tilde P|||_\sigma = ||\tilde P|| > 0$. Since $\tilde P = |\det C|^{-\sigma} \rho_{(A,B,C)} P$ for diagonal $A$, $B$, and $C$, the infimum in the definition of $|||P|||_\sigma$ must be attained for some diagonal matrices $A$, $B$, and $C$.

By the above argument, if $\tilde W$ is trivial, then the lemma follows because the sum in \eqref{genscale} trivially attains a minimum. It may consequently be assumed henceforth that $\tilde W$ is not trivial. The proof will proceed by induction on the number of indices $(i,j,\alpha) \in E$ for which $\theta_{ij\alpha} = 0$.

In the case of strict positivity (i.e., $\theta_{ij\alpha} > 0$ for all $(i,j,\alpha) \in E$), fix any norm $|| \cdot ||$ on $\tilde W$; we will show that there must be some positive $c$ such that $\max_{(i,j,\alpha) \in E} \tilde w \cdot (\basis^i;\basis^j;\alpha-\sigma \mathbf{1}_d) \geq c ||\tilde w||$ for all $\tilde w \in \tilde W$. This is a standard compactness argument: by rescaling, one may assume that $||\tilde w|| = 1$. If there were no such $c$, there would be a sequence $\{\tilde w^n\}_{n=1}^\infty$ on the unit sphere of the norm such that $\max_{(i,j,\alpha) \in E} \tilde w^n \cdot (\basis^i;\basis^j;\alpha-\sigma \mathbf{1}_d)  \leq n^{-1}$. Because $E$ is finite and the unit sphere is compact, there must be a convergent subsequence with limit $\tilde w$ in the unit sphere such that $\max_{(i,j,\alpha) \in E} \tilde w \cdot (\basis^i;\basis^j;\alpha-\sigma \mathbf{1}_d) \leq 0$. But
\begin{equation} \sum_{(i,j,\alpha) \in E} \theta_{ij\alpha} \tilde w \cdot (\basis^i;\basis^j;\alpha-\sigma \mathbf{1}_d) = \tilde w \cdot \left( p^{-1} \mathbf{1}_p; q^{-1} \mathbf{1}_q; (\sigma - \sigma) \mathbf{1}_d \right) = 0 \label{thesum} \end{equation}
by \eqref{convex}.
Since all $\theta_{ij\alpha}$ are strictly positive and each term of the sum \eqref{thesum} is nonpositive, each term must actually equal zero (since if any of the terms were strictly negative, the sum \eqref{thesum} would have to be strictly negative). But this means that $\tilde w$ vanishes when dotted with $(\basis^i;\basis^j;\alpha-\sigma \mathbf{1}_d)$ for each $(i,j,\alpha) \in E$, which is not possible because $\tilde w$ is a nonzero vector in $\tilde W$.

Supposing in general that $\max_{(i,j,\alpha) \in E} \tilde w \cdot (\basis^i;\basis^j;\alpha-\sigma \mathbf{1}_d) \geq c ||\tilde w||$ for all $\tilde w \in \tilde W$ (i.e., regardless of whether one is in the strict positivity situation or not), it will follow that the left-hand side of \eqref{genscale} attains a minimum and consequently that $|||P|||_{\sigma} > 0$ with its infimum being attained for diagonal $A, B$, and $C$. For any $\tilde w \in \tilde W$, one may bound the right-hand side of \eqref{genscale} below by any single term which maximizes the exponential $e^{2 \tilde w \cdot (\basis^i;\basis^j;\alpha-\sigma \mathbf{1}_d)}$. The result is that
\begin{align*} \sum_{(i,j,\alpha) \in E} e^{2 \tilde{w} \cdot (\basis^i;\basis^j;\alpha-\sigma \mathbf{1}_d)} & \frac{|\partial^\alpha P_{ij}(0)|^2}{\alpha!} \\ & \geq \min_{(i,j,\alpha) \in E} \frac{|\partial^\alpha P_{ij}(0)|^2}{\alpha!} \max_{(i,j,\alpha) \in E} e^{2 \tilde w \cdot (\basis^i;\basis^j;\alpha-\sigma \mathbf{1}_d)} \\ & \geq \left(  \min_{(i,j,\alpha) \in E} \frac{|\partial^\alpha P_{ij}(0)|^2}{\alpha!}  \right) e^{2c ||\tilde w||}. \end{align*}
Because the right-hand side is unbounded as $||\tilde w|| \rightarrow \infty$, there is some positive radius $R$ such that the right-hand side is at least $2||P||^2$ when $||\tilde w|| > R$. As a consequence,
\[  \sum_{(i,j,\alpha) \in E} e^{2 \tilde{w} \cdot (\basis^i;\basis^j;\alpha-\sigma \mathbf{1}_d)} \frac{|\partial^\alpha P_{ij}(0)|^2}{\alpha!} \geq 2||P||^2\]
when $||\tilde w|| \geq R$. The set $||\tilde w|| \leq R$, on the other hand, is compact, and so the function
\[  \sum_{(i,j,\alpha) \in E} e^{2 \tilde{w} \cdot (\basis^i;\basis^j;\alpha-\sigma \mathbf{1}_d)}  \frac{|\partial^\alpha P_{ij}(0)|^2}{\alpha!} \]
attains a minimum on the ball $||\tilde w|| \leq R$. Because the value of that minimum is no greater than $||P||^2$, the minimum on $||\tilde w|| \leq R$ must be a global minimum (as the sum is at least $2||P||^2$ everywhere else). Therefore
\[ \sum_{(i,j,\alpha) \in E} e^{2 w \cdot (\basis^i;\basis^j;\alpha-\sigma \mathbf{1}_d)} \frac{|\partial^\alpha P_{ij}(0)|^2}{\alpha!} \]
attains a global minimum as $w$ ranges over all of $W$. As has already been demonstrated, this implies that the minimum $|||P|||_\sigma$ is attained for diagonal $A$, $B$, and $C$ and is strictly positive.

Since the lemma is now known to hold in the case of strict positivity, suppose that strict positivity of the $\theta_{ij\alpha}$ fails. Thanks to prior reasoning, it suffices to assume that $\tilde W$ is nontrivial and that there is no positive $c$ such that $\max_{(i,j,\alpha) \in E} \tilde w \cdot (\basis^i;\basis^j;\alpha-\sigma \mathbf{1}_d) \geq c ||\tilde w||$ for all $\tilde w \in \tilde W$. Thus it may be assumed that there exists a $\tilde w$ in the unit sphere of $\tilde W$ such that $\tilde w \cdot (\basis^i;\basis^j;\alpha - \sigma \mathbf{1}_d) \leq 0$ for all $(i,j,\alpha) \in E$. Because $\tilde w$ belongs to $\tilde W$, there is at least one triple $(i,j,\alpha) \in E$ at which this inequality must be strict. The condition \eqref{convex} implies that
\[ \sum_{(i,j,\alpha) \in E} \theta_{ij\alpha} \tilde w \cdot (\basis^i;\basis^j;\alpha - \sigma \mathbf{1}_d) = 0. \]
As previously, this forces $\tilde w \cdot (\basis^i;\basis^j;\alpha - \sigma \mathbf{1}_d) = 0$ whenever $\theta_{ij\alpha} > 0$. So in particular, every index $(i,j,\alpha) \in E$ for which $\tilde w \cdot (\basis^i;\basis^j;\alpha - \sigma \mathbf{1}_d) < 0$, of which there is at least one, has $\theta_{ij\alpha} = 0$. Decomposing $\tilde w$ into its parts $(\tilde w_;\tilde w_q;\tilde w_d)$, let $D_i$ be the matrix with diagonal entries $(\tilde w_p)_1,\ldots,(\tilde w_p)_p$, $D_2$ be the diagonal matrix with entries $(\tilde w_q)_1,\ldots(\tilde w_q)_q$, and $D_3$ be the diagonal matrix with entries $(\tilde w_d)_1,\ldots,(\tilde w_d)_d$. The matrices
\[ P_{t} :=  |\det e^{t D_3}|^{-\sigma} \rho_{(e^{tD_1},e^{tD_2},e^{tD_3})} P \]
will have the property that 
\[ \partial^{\alpha} [P_t]_{ij}(0) = e^{t \tilde w \cdot (\basis^i;\basis^j;\alpha - \sigma \mathbf{1}_d)} \partial^{\alpha} P_{ij}(0) \]
for each $i,j,\alpha$. Because $\tilde w \cdot (\basis^i;\basis^j;\alpha - \sigma \mathbf{1}_d)$ is never positive for $(i,j,\alpha) \in E$, the limit, which will be called $P_\infty$, as $t \rightarrow \infty$ of $P_t$ exists, and since $|||\cdot |||_\sigma$ is continuous and $|||P_t|||_\sigma = |||P|||_\sigma$, it follows that $|||P_\infty|||_{\sigma} = |||P|||_\sigma$ as well. 

Let $E_\infty$ be the set of indices $(i,j,\alpha)$ for which $\partial^\alpha [P_\infty]_{ij}(0) \neq 0$. Because $\tilde w \cdot (\basis^i;\basis^j;\alpha - \sigma \mathbf{1}_d) = 0$ for all $(i,j,\alpha) \in E$ such that $\theta_{ij\alpha} > 0$, every such triple $(i,j,\alpha)$ belongs to $E_\infty$. Moreover, because there is at least one $(i,j,\alpha) \in E$ such that $\tilde w \cdot (\basis^i;\basis^j;\alpha - \sigma \mathbf{1}_d) < 0$, it follows that $E_\infty$ is a proper subset of $E$.

Because $E_\infty \subset E$, hypotheses 1 and 2 of this lemma continue to hold for $P_\infty$ because they held for $P$. Hypothesis 3 continues to hold as well by simply retaining the already-given values of the constants $\theta_{ij\alpha}$. This is because every term of the sum \eqref{convex} with index $(i,j,\alpha) \in E \setminus E_\infty$ necessarily has $\theta_{ij\alpha} = 0$, meaning that its omission does not change the value of the sum. By induction on the number of indices $(i,j,\alpha)$ for which $\partial^\alpha P_{ij}(0) \neq 0$ and $\theta_{ij\alpha} = 0$, it follows that $|||P_\infty|||_\sigma > 0$ since this number has decreased by at least one in passing from $P$ to $P_\infty$. Since $|||P|||_\sigma = |||P_\infty|||_\sigma$, the lemma is complete.
\end{proof}

\subsection{Nash Functions}
\label{nash}

The working category of functions for Theorem \ref{bigtheorem1} is the Nash functions. For the present purposes, this category is defined as follows.
\begin{definition}
Suppose $\domain \subset \R^d$ is open. A real analytic function $f : \domain \rightarrow \R$ will be called a Nash function when there exists a polynomial $p(x,y) := (p_1(x,y),\ldots,p_N(x,y))$ mapping points $(x,y) \in \R^d \times \R^N$ to vectors $\R^N$ and a real analytic map $F(x) := (f_1(x),\ldots,f_N(x))$ defined for all $x \in \domain$ such that $p(x,F(x))$ is identically zero on $\domain$, $(\det \frac{\partial p}{\partial y})(x,F(x))$ is \textit{never} identically zero on any open subset of $\domain$, and $f(x) = F_i(x)$ for some fixed $i \in \{1,\ldots,N\}$ and all $x \in \domain$. The complexity $K$ of $f$ is the minimum of $\deg p_1 \cdots \deg p_N$ over all maps $p$ satisfying the definition for this $f$.
\end{definition}

The definition just given differs slightly from that appearing in \cite{gressmancurvature}, but the two definitions are completely equivalent. By the earlier definition, $F$ would be regarded as an algebraic lifting map and a Nash function would be one which equals $q(x,F(x))$ for some polynomial $q$. This structure can easily be directly incorporated into $F$ itself, however, by augmenting $y$ with an additional variable $y_{N+1}$, fixing $F_{N+1}(x) = f(x)$, and augmenting $p$ with the polynomial $y_{N+1} - q(x,y_1,\ldots,y_N)$. The Jacobian condition for $p$ still holds for this augmented system because $F_1,\ldots,F_N$ are independent of $y_{N+1}$; likewise the complexity of $f$ in the old definition will be the same as the complexity via this new definition using the augmentation process just described. Conversely, given a function which is Nash according to the new definition, $F$ can act as the algebraic lifting map in the older definition and $f$ can simply equal $y_i$ for the index $i$ such that $F_i = f$. Thus the definitions are equivalent and the complexities agree in both formulations.

While the main interest will be to study functions and forms which are genuinely algebraic, the category of Nash functions provides some important tools.
It is not a difficult exercise to prove that
\begin{itemize}
\item The sum of Nash functions $f$ and $g$ on $\domain$ with complexities $K_1$ and $K_2$ is also Nash with complexity at most $K_1 K_2$.
\item The product of Nash functions $f$ and $g$ on $\domain$ with complexities $K_1$ and $K_2$ is also Nash with complexity at most $2 K_1 K_2$.
\item The ratio of Nash functions $f$ and $g$ on $\domain$ with complexities $K_1$ and $K_2$ is Nash on the open set where $g \neq 0$ with complexity at most $2 K_1 K_2$.
\item The coordinate partial derivative of a Nash function $f$ on $\domain$ with complexity $K$ is a Nash function of complexity at most $K^2$.
\end{itemize}
The most challenging of these is the last one. Assuming that $f$ is Nash, let $p(x,y)$ and $F$ be as given by the definition. To show that $\partial_{t_i} f(t)$ is Nash, we will augment the variables $y$ with additional variables $z \in \R^N$ and take the polynomial map $(p(x,y),\tilde p(x,y,z))$ with
\[ \tilde p(x,y,z) := \left(\frac{\partial p}{\partial x_i} \right)(x, y) + \sum_{j=1}^N z_j \frac{\partial p}{\partial y_j} (x, y). \]
An immediate consequence of the chain rule is that
\[ p(t,F(t)) \equiv \tilde p(t,F(t),\partial_{t_i} F(t)) \equiv 0, \]
and furthermore
\[ \det \frac{\partial (p,\tilde p)}{\partial(y,z)}(x,y) = \left( \det \frac{\partial p}{\partial y} (x,y) \right)^2 \]
which never vanishes on open sets when $x = t, y = F(t)$. Since the degree of each component of $\tilde p$ is never greater than the degree of the corresponding component of $p$, the product of degrees of the augmented system will be at most $K^2$ when the original system has product $K^2$.

As mentioned in the introduction, when $K$ is a fixed parameter bounding the complexity of Nash functions, it will always be assumed that $K$ has been chosen so that $K \geq 2$. The advantage of doing so is that when $f$ and $g$ are Nash of complexity at most $K^{N}$, then each of the fundamental operations above acting on $f$ and $g$ will be Nash of complexity at most $K^{2N+1}$. Thus for more complicated combinations of many of these operations, the result will always be a Nash function of complexity at most $K^{N'}$ for some exponent $N'$ which depends only on the original exponents $N$ and the total number of operations performed.

Because we are concerned with a number of objects which are more complex than functions, we must be explicit about what it means for these objects to be Nash. The following definitions will make this precise.

\begin{definition}
A smooth function $f \ : \ \domain \rightarrow \R^n$ is called a Nash function of complexity at most $K$ when $f(t) := (f_1(t),\ldots,f_n(t))$ and each $f_1,\ldots,f_n$ is Nash of complexity at most $K$.

A smooth function $u \ : \ \domain \rightarrow \Lambda^p(V^*)$ is called Nash of complexity at most $K$ when there is some basis $w^1,\ldots,w^q$ of $V^*$ such that
\[ u(t) = \sum_{i_1 < \cdots < i_p} c_{i_1 \cdots i_p}(t) w^{i_1} \wedge \cdots \wedge w^{i_p} \]
for real analytic functions $c_{i_1 \cdots i_p}(t)$ which themselves are Nash functions of complexity at most $K$ on $\domain$.
\end{definition}

\begin{definition}
For each positive integer pair $(q,r)$, fix some positive integer $N_{q,r}$. Given a smooth family $V(t)$ of $r$-dimensional subspaces of $V$, we say it is Nash of complexity at most $K$ on $\domain$ when there are at most $N_{q,r}$ local parametrizations $(\domain_m,v_m)$ of $V(t)$ such that the open subsets $\domain_m$ have union $\domain$ and on each $\domain_m$, $v_m(t)$  is Nash of complexity at most $K$.
\end{definition}

In practice, the explicit examples in Section \ref{examplesec} are compatible with simply fixing $N_{q,r} = 1$. We allow for the possibility of $N_{q,r}$ being larger (but independent of the function and complexity) to increase compatibility with the basic features of adapted factorizations which arise in Sections \ref{kernelsec} and \ref{adaptsec}. For theoretical purposes, Corollary \ref{bndcount} below establishes that all smooth families of subspaces can be defined using only boundedly many local parametrizations, and it is useful to take $N_{q,r}$ to equal the worst-case number of such local parametrizations needed so that the results below yield identical quantitative results whether or not the subspaces in question happen to be Nash.

In terms of the nature of Nash functions, most of the heavy lifting has already been accomplished in \cite{gressmancurvature}; the main result for vector field construction (Theorem 3) will be used here as a black box. 

\subsection{Other Foundational Results}

\label{kernelsec}

The basic engine behind the concept of adapted factorizations is the following proposition. Loosely speaking, it gives one a quantitative way to smoothly parametrize the null spaces of smoothly-varying linear transformations. A key feature of the proposition is that the complexity of the constructed objects depends in a direct way on the complexity of the objects being studied.
\begin{proposition}
Suppose that $V$ and $W$ are finite-dimensional real vector spaces with $\dim V \geq q$ and let $\domain \subset \R^d$ be an open set. Given smooth $e^{1}(t),\ldots,e^{q}(t)$ $V$-valued functions on $\domain$ which are linearly independent at every point and $M(t) : V \rightarrow W$ which is a smooth family of linear transformations for $t \in \domain$ which have constant rank $r < q$ when restricted to the span of $e^1(t),\ldots,e^q(t)$ at every point $t \in \domain$, there exists a collection of at most $\binom{p}{r} \binom{q}{r}$ open sets $\domain_m \subset \domain$ whose union is $\domain$ such that the following are true:
\label{kernelparam}
\begin{enumerate}
\item For each $m$, there are smooth $V$-valued functions $x^1_m(t),\ldots,x^{q-r}_m(t)$ such that $\{x^i_m(t)\}_{i=1}^{q-r}$ is a basis of $\operatorname{span}\{e^1(t),\ldots,e^q(t)\} \cap \ker M(t)$ at each $t \in \domain_m$.
\item These $\{x^i_m(t)\}_{i=1}^{q-r}$ have coefficients with respect to $e^1(t),\ldots,e^q(t)$ with magnitude at most $2$ at every point $t \in \domain_m$. Moreover, the coefficients are given on $\domain_m$ by a ratio of two homogeneous degree $r$ polynomial functions of the vectors $M(t)e^1(t),\ldots,M(t)e^q(t)$. In particular, there is some exponent $N$ depending only on $q$ and $r$ such that if the entries of $M(t)$ and the coefficients of each $e^i(t)$ are Nash functions of complexity at most $K$ for some $K \geq 2$ when expressed in some fixed (constant) basis, then each $x_m^i(t)$, $i=1,\ldots,q-r$, will also be Nash of complexity at most $K^N$.
\item For each $m$, there are indices $j_1,\ldots,j_r \in \{1,\ldots,q\}$ such that $e^{j_1}(t) \wedge \cdots \wedge e^{j_r}(t) \wedge x^1_m(t) \wedge \cdots \wedge x^{q-r}_m(t) = e^1(t) \wedge \cdots \wedge e^q(t)$ for all $t \in \domain_m$.
\end{enumerate}
\end{proposition}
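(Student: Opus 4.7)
The approach is to apply Cramer's rule pointwise to the rank-$r$ map $M(t)$ restricted to $\operatorname{span}\{e^1(t),\ldots,e^q(t)\}$, and then to localize using a nearly-maximal-minor argument. Fix once and for all a basis of $W$, write $p := \dim W$, and regard $[M(t)e^j(t)]$ as a $p \times q$ matrix of coordinates. For every pair of $r$-element subsets $I \subset \{1,\ldots,p\}$ and $J=\{j_1<\cdots<j_r\} \subset \{1,\ldots,q\}$, let $\Delta_{I,J}(t)$ denote the corresponding $r \times r$ minor, and set
\[ \domain_{I,J} := \{t \in \domain \ : \ 2|\Delta_{I,J}(t)| > \max_{I',J'} |\Delta_{I',J'}(t)|\}. \]
Each $\domain_{I,J}$ is open as a strict inequality between continuous functions. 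Because $M(t)$ has rank exactly $r$ on the span at every $t$, at least one $\Delta_{I',J'}(t)$ is nonzero everywhere, so the pair attaining $\max_{I',J'}|\Delta_{I',J'}(t)|$ places $t$ in the corresponding $\domain_{I,J}$, and the $\binom{p}{r}\binom{q}{r}$ sets $\domain_{I,J}$ cover $\domain$.

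Fix $(I,J)$ with $\domain_m := \domain_{I,J}$ nonempty. On $\domain_m$, the vectors $M(t)e^{j_1}(t),\ldots,M(t)e^{j_r}(t)$ are linearly independent and thus span the image. For each $k \in \{1,\ldots,q\}\setminus J$, the equation $M(t)e^k(t) = \sum_{\ell=1}^r c_{k,\ell}(t) M(t)e^{j_\ell}(t)$ has a unique solution, obtained by restricting to rows $I$ and applying Cramer's rule:
\[ c_{k,\ell}(t) = \pm \Delta_{I,(J \setminus \{j_\ell\}) \cup \{k\}}(t)/\Delta_{I,J}(t), \]
so $|c_{k,\ell}(t)| < 2$ by the definition of $\domain_m$. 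Enumerating $\{1,\ldots,q\}\setminus J =: \{k_1,\ldots,k_{q-r}\}$, define
\[ x_m^i(t) := e^{k_i}(t) - \sum_{\ell=1}^r c_{k_i,\ell}(t)\, e^{j_\ell}(t), \qquad i=1,\ldots,q-r. \]
These lie in $\ker M(t) \cap \operatorname{span}\{e^1(t),\ldots,e^q(t)\}$ by construction, and their $e^{k_i}$-components form the identity modulo $\operatorname{span}\{e^{j_1},\ldots,e^{j_r}\}$, so they span the full $(q-r)$-dimensional kernel of the restricted map. Expanding $e^{j_1}(t)\wedge\cdots\wedge e^{j_r}(t)\wedge x_m^1(t)\wedge\cdots\wedge x_m^{q-r}(t)$ via multilinearity annihilates every Cramer correction term (each is a combination of $e^{j_\ell}$'s already present), leaving $e^{j_1}(t)\wedge\cdots\wedge e^{j_r}(t)\wedge e^{k_1}(t)\wedge\cdots\wedge e^{k_{q-r}}(t)$; permuting the $j_\ell$'s to absorb the sign of the corresponding permutation of $\{1,\ldots,q\}$ (which is permitted since the statement does not require $j_1<\cdots<j_r$) recovers $e^1(t)\wedge\cdots\wedge e^q(t)$ and yields item (3).

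Item (2) is then immediate from the Cramer formula: each $c_{k_i,\ell}(t)$ is a ratio of two $r \times r$ minors of $[M(t)e^j(t)]$, each homogeneous of degree $r$ in the vectors $M(t)e^1(t),\ldots,M(t)e^q(t)$, and each coefficient of $x_m^i(t)$ relative to $e^1(t),\ldots,e^q(t)$ is either $0$, $1$, or $-c_{k_i,\ell}(t)$. Under the Nash hypothesis, the closure properties listed in Section \ref{nash}, under sums, products, and ratios, each multiplying the complexity by a bounded factor once $K \geq 2$, propagate through the fixed finite sequence of operations needed to form the Cramer ratios and the vectors $x_m^i(t)$, yielding complexity at most $K^N$ for a suitable $N$. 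The principal design choice in the argument is the localization via nearly maximal minors: simply requiring $\Delta_{I,J}(t) \neq 0$ would not bound the coefficients $c_{k,\ell}(t)$, and it is precisely the maximality threshold that simultaneously produces the bound $|c_{k,\ell}(t)|<2$ and the combinatorial count $\binom{p}{r}\binom{q}{r}$.
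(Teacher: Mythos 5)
Your proof is essentially identical to the paper's: same localization by nearly-maximal $r\times r$ minors, same Cramer's-rule construction of the kernel vectors, same bound $|c_{k,\ell}|<2$ from the threshold defining $\domain_{I,J}$, and the same homogeneity argument for Nash complexity. The paper first reduces to $V=\R^q$, $W=\R^p$ by passing to the coordinate matrix $\overline M(t)$ with $(i,j)$ entry $[M(t)e^j(t)]_i$, but this is cosmetic and your direct treatment is equivalent.

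The only loose end is the sign correction in item (3): you propose fixing an odd permutation $(j_1,\ldots,j_r,k_1,\ldots,k_{q-r})$ by transposing two of the $j_\ell$'s, which requires $r\geq 2$. When $r\leq 1$ (e.g.\ the case $q=2$, $r=1$, $J=\{2\}$, where $e^2\wedge x_m^1 = -\,e^1\wedge e^2$), this device is unavailable; the paper instead simply replaces $x_m^1(t)$ by its negative, which works in all cases. This is a trivial repair rather than a substantive gap, and the remainder of the argument stands.
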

\begin{proof}
Without loss of generality, it may be assumed that $V := \R^q$ and $W := \R^p$ if one applies the proposition to the matrix-valued function
\[ \overline{M}(t) := \begin{bmatrix} [M(t) e^1(t)]_1 & \cdots & [M(t) e^q(t)]_1 \\ \vdots & \ddots & \vdots \\ [M(t) e^1(t)]_p & \cdots & [M(t) e^q(t)]_p \end{bmatrix} \]
(where $[\cdot]_i$ denotes the $i$-th coordinate with respect to some chosen basis on $W$) and takes new ${e}^1(t),\ldots,{e}^q(t)$ to simply be constant and equal to the standard basis vectors $\basis^1,\ldots,\basis^q$ in $\R^q$.

The sets $\domain_m$ will be indexed by tuples $m := (i_1,\ldots,i_r,j_1,\ldots,j_r)$ such that $1 \leq i_1 < \cdots < i_r \leq p$ and $1 \leq j_1 < \cdots < j_r \leq q$. Let $m(t)$ denote the corresponding $r \times r$ minor of $M(t)$ (i.e., the minor with rows $i_1,\ldots,i_r$ and columns $j_1,\ldots,j_r$).
Because $M$ has rank exactly $r$ at every point $t \in \domain$, there is always some $m$ such that $\det m(t) \neq 0$. Let 
\[ \domain_m := \set{t \in \domain}{ \max_{m'} |\det m'(t)| < 2 |\det m(t)| }. \]
Each $\domain_m$ is open because the determinants are continuous functions. Moreover, each $t \in \domain$ belongs to at least one $\domain_m$ (namely, the $m$ which maximizes $|\det m(t)|$ at the point $t$).
Now fix any choice $m := (i_1,\ldots,i_r,j_1,\ldots,j_r)$, let $\sigma_1,\ldots,\sigma_{q-r}$ be an enumeration of $\{1,\ldots,q\} \setminus \{j_1,\ldots,j_r\}$ in increasing order, and define
\begin{equation} x^k_m(t) := \basis^{\sigma_k} - \sum_{k'=1}^r c_{k'}^k(t) \basis^{j_{k'}} \label{uniquecoeff} \end{equation}
where
\[ \begin{bmatrix} c_{1}^k(t) \\ \vdots \\ c_{r}^k(t) \end{bmatrix} := m^{-1}(t) \begin{bmatrix} M_{i_1}^{\sigma_k}(t) \\ \vdots \\ M_{i_r}^{\sigma_k}(t) \end{bmatrix} \]
(here $M_{i}^j(t)$ denotes the entry of $M(t)$ in row $i$ and column $j$).
By Cramer's rule, each $c_{k'}^{k}$ can be expressed (up to $\pm$ sign) as the ratio of two determinants of minors, namely the minor with rows $i_1,\ldots,i_r$ and columns $j_1,\ldots,\widehat{j_{k'}},\ldots,j_r,\sigma_k$ (where $\widehat{\cdot}$ denotes omission) in the numerator and the minor $m$ in the denominator. By definition of $\domain_m$, the coefficients have magnitude at most $2$ everywhere on $\domain_m$.
Apply $M$ to both sides of the definition of $x^{k}_m(t)$ and restrict the result to rows $i_1,\ldots,i_r$. It follows that
\begin{align*} \begin{bmatrix} [M(t) x^k_{m}(t)]_{i_1} \\ \vdots \\ [M(t) x^k_m(t)]_{i_r} \end{bmatrix} & = \begin{bmatrix} M_{i_1}^{\sigma_k}(t) \\ \vdots \\ M_{i_r}^{\sigma_k}(t) \end{bmatrix} - \sum_{k'=1}^r   \begin{bmatrix} M_{i_1}^{ j_{k'}}(t) \\ \vdots \\ M_{i_r}^{ j_{k'}}(t) \end{bmatrix} c_{k'}^k(t) \\
& = \begin{bmatrix} M_{i_1}^{ \sigma_k}(t) \\ \vdots \\ M_{i_r }^{\sigma_k}(t) \end{bmatrix} - m(t) \begin{bmatrix} c_{1}^{k}(t) \\ \vdots \\ c_{r}^{k}(t) \end{bmatrix} = 0.
\end{align*}
In particular, the coefficients of $M(t) x_m^k(t)$ are zero in each row $i_1,\ldots,i_r$. However, all other rows of $M$ are linear combinations of rows $i_1,\ldots,i_r$, so it follows that $M(t) x^k_m(t) = 0$. The definition of $x^k_m$ also makes it clear that $e^{j_1} \wedge \cdots \wedge e^{j_r} \wedge x^{1}_m \wedge \cdots \wedge x^{q-r}_m$ is constant and equal to $\pm e^1 \wedge \cdots \wedge e^q$ on all of $\domain_m$, so in particular the $x^k_m$ are linearly independent and therefore form a basis of the kernel of $M(t)$. By replacing one of the $x^{1}_m(t)$ with its negative if necessary (which will be the case exactly when $(j_1,\ldots,j_r,\sigma_1,\ldots,\sigma_{q-r})$ is an odd permutation of $(1,\ldots,q)$), one can assume that the wedge has plus sign rather than minus.
\end{proof}

A useful consequence of Proposition \ref{kernelparam} is that one can assume without loss of generality that any smooth family of subspaces is given by a bounded number of local parametrizations.
\begin{corollary}
Suppose $V(t)$ is a smoothly-varying family of $r$-dimensional subspaces of $V$ for $t \in \domain$. Given $e^1(t),\ldots,e^q(t)$ which are smooth and form a basis of $V$ at each point $t \in \domain$,  there exist a finite number of local parametrizations $(\domain_m,v_m)$ (with cardinality depending only on $r$ and $q$) such that the following hold: \label{bndcount}
\begin{enumerate}
\item For each $m$, there are $V$-valued functions $x^1_m(t),\ldots,x^r_m(t)$ on $\domain_m$ such that $v_m(t) = x^1_m (t) \wedge \cdots \wedge x^r_m(t)$ for all $t \in \domain_m$;
\item For each $m$, there exist $1 \leq j_1 < \cdots < j_r \leq q$ such that
\begin{equation} x_m^k(t) = \pm e^{\sigma_k}(t) + \sum_{k'=1}^r c^k_{k'}(t) e^{j_{k'}}(t) \label{vectorsdefined} \end{equation}
for some $\sigma_k \not \in \{j_1,\ldots,j_r\}$ and coefficients $c^{k}_{k'}(t)$ which are bounded in magnitude by at most $2$ on $\domain_m$. Moreover
\[ e^{j_1}(t) \wedge \cdots \wedge e^{j_r}(t) \wedge v_m(t) = e^1(t) \wedge \cdots \wedge e^q(t) \]
for all $t \in \domain_m$.
\end{enumerate}
\end{corollary}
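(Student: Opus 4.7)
My plan is to derive Corollary~\ref{bndcount} as an immediate application of Proposition~\ref{kernelparam}, by encoding the subspace $V(t)$ as the kernel of a smoothly varying linear map built from any local parametrization of $V(t)$.

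The key observation is that, given any local parametrization $(\domain_0,v_0)$ of $V(t)$, the map
\[
M(t) \colon V \longrightarrow \Lambda^{r+1}(V), \qquad M(t)w := v_0(t) \wedge w,
\]
is smooth on $\domain_0$, and because $v_0(t)$ is decomposable and nonvanishing, one has $M(t)w = 0$ precisely when $w \in V(t)$. Thus $\ker M(t) = V(t)$ has constant dimension $r$ and $M(t)$ has constant rank $q-r$ on the span of $e^1(t),\ldots,e^q(t)$ (which is all of $V$). Applying Proposition~\ref{kernelparam} to this $M$, with $p := \dim\Lambda^{r+1}(V) = \binom{q}{r+1}$ and with the parameter ``$r$'' there playing the role of $q-r$ here, yields at most $\binom{\binom{q}{r+1}}{q-r}\binom{q}{q-r}$ open subsets $\domain_m$ and smooth vectors $x^1_m(t),\ldots,x^{r}_m(t)$ spanning $V(t)$ on each $\domain_m$. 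By Proposition~\ref{kernelparam}, these vectors have precisely the form asserted in Corollary~\ref{bndcount} (namely, $x^k_m(t) = \pm e^{\sigma_k}(t)$ plus a linear combination of the distinguished basis vectors $e^{j_{k'}}(t)$ with coefficients bounded by $2$), and the wedge identity $e^{j_1}(t) \wedge \cdots \wedge e^{j_{q-r}}(t) \wedge x^1_m(t) \wedge \cdots \wedge x^r_m(t) = e^1(t) \wedge \cdots \wedge e^q(t)$ holds throughout $\domain_m$. Setting $v_m := x^1_m \wedge \cdots \wedge x^r_m$ gives the required local parametrization of $V(t)$.

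The only subtlety is that $v_0$ is defined only locally, whereas the corollary demands a bounded number of local parametrizations covering all of $\domain$. On the overlap of two patches, however, two choices of $v_0$ must differ by a smooth nonvanishing scalar function $f(t)$, and this scales $M(t)$ by the same factor $f(t)$. Both of the objects produced by Proposition~\ref{kernelparam} are manifestly invariant under such a scaling: the sets $\domain_m$ are defined by which $(q-r) \times (q-r)$ minor of $M(t)$ maximizes $|\det\,\cdot\,|$ (invariant under scaling $M$ by a nonzero scalar), and the coefficients $c^k_{k'}(t)$ appearing in each $x^k_m$ arise via Cramer's rule as ratios of two same-size minors (also invariant). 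Consequently the collections of open sets and of vectors produced on the individual patches glue together into a single globally defined family, whose total cardinality is bounded by $\binom{\binom{q}{r+1}}{q-r}\binom{q}{q-r}$, a quantity depending only on $r$ and $q$.

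The main thing to check carefully is this scale invariance on overlaps; aside from that the proof is simply a reinterpretation of Proposition~\ref{kernelparam}. I do not anticipate any serious obstacle, as the defining condition $M(t) = v_0(t) \wedge\,\cdot\,$ is the natural linear gadget whose kernel is $V(t)$, and the invariance follows directly from the explicit formulas in the proof of Proposition~\ref{kernelparam}.
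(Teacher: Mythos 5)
Your proof is correct and follows the same route as the paper's own argument: both construct the linear map $M(t)\omega := v_0(t)\wedge\omega$, observe that its kernel is exactly $V(t)$ because $v_0(t)$ is decomposable, apply Proposition~\ref{kernelparam}, and then glue the resulting pieces across different local parametrizations using the fact that any two parametrizations $v_0, v_0'$ differ only by a nonvanishing smooth scalar. The one minor stylistic difference is in how the gluing is justified---you argue directly that the sets $\domain_m$ and the Cramer's-rule coefficients $c^k_{k'}(t)$ are manifestly unchanged under $M \mapsto f(t)M$, whereas the paper appeals to the uniqueness of the coefficients making $x^k_m(t)$ lie in $V(t)$; both are valid, though you should also note (as the paper does) that the $\pm$ sign in \eqref{vectorsdefined} is determined purely by the parity of the index permutation attached to $m$ and hence is automatically consistent across overlapping patches.
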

\begin{proof}
Every smoothly varying family of $r$-dimensional subspaces admits a number (and presumably many) of local parametrizations $(\domain_0,v_0(t))$. The principal benefit of this corollary is to establish a quantitative bound on the number of local parametrizations required. For each local parametrization, apply the previous proposition on $\domain_0$ to the map $M(t)$ which sends $\omega \in V$ to $v_0(t) \wedge \omega \in \Lambda^{r+1}(V)$. The kernel of this map is exactly $V(t)$ because $v_0$ is decomposable. This allows $\domain_0$ to be decomposed into subsets $(\domain_0)_m$ for each choice of minor $m$. We now let $\domain_m$ be the union of each $(\domain_0)_m$ over all choices of $\domain_0$. In the formula \eqref{vectorsdefined}, the coefficients $c^{k'}_k(t)$ which make $x_{m}^k(t)$ belong to $V(t)$ are unique because no linear combination of $e^{j_1}(t),\ldots,e^{j_r}(t)$ maps to $0$ via the given map $M(t)$ (if it did, this would force $v_m(t) \wedge e^{j_1}(t) \wedge \cdots \wedge e^{j_r}(t) = 0$ when $v_m$ is the wedge product $x^1_m \wedge \cdots \wedge x^r_m$, which violates the third conclusion of Proposition \ref{kernelparam}). Note also that the $\pm$ sign in \eqref{vectorsdefined} is exclusively a function of $k$ and $m$, so for any fixed pair $(k,m)$, it may be assumed that the sign is the same on every $(\Omega_0)_m$ as $\Omega_0$ ranges over all possible domains. Thus if $(\domain_0)_m$ and $(\domain_0')_m$ have overlap, the corresponding formulas for $x_m^k(t)$ and $(x')_m^k(t)$ will necessarily agree on the overlap and consequently one can globally define $x^k_m(t)$ at every point in $\domain_m$ by simply computing it locally inside some $(\domain_0)_m$ which contains the point in question.
\end{proof}

We close this section with two miscellaneous elementary results which will be used in the proof of Theorem \ref{bigtheorem1}.
The first is a simple but important observation regarding the possibility of subdividing compact sets using finite open covers.
\begin{proposition}
Suppose that $\domain_1,\ldots,\domain_N \subset \R^d$ are open sets and $E \subset \bigcup_{i'=1}^N \domain_{i'}$ is compact. Then for each $i=1,\ldots,N$, there is a compact set $E_i \subset E \cap \domain_i$ such that $\bigcup_{i'=1}^N E_i = E$. \label{cutprop}
\end{proposition}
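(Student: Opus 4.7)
The plan is to build the $E_i$ via a uniform positivity argument for the distance to the complement of each $\domain_i$. I would first define, for each $i \in \{1,\ldots,N\}$, the continuous function $f_i(x) := \dist(x, \R^d \setminus \domain_i)$, with the convention $f_i \equiv +\infty$ in the degenerate case $\domain_i = \R^d$. Each $f_i$ is continuous (in fact $1$-Lipschitz when finite), and $f_i(x) > 0$ holds precisely when $x \in \domain_i$. The hypothesis $E \subset \bigcup_{i'} \domain_{i'}$ thus translates into the pointwise inequality $\max_i f_i(x) > 0$ for every $x \in E$.

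The second step is where compactness enters. Because $\max_i f_i$ is a finite maximum of continuous functions, it is itself continuous, and since it is strictly positive on the compact set $E$ it attains a positive infimum there. So one obtains a single $\delta > 0$ with $\max_i f_i(x) \geq \delta$ for all $x \in E$. This uniform lower bound replaces the pointwise positivity with something usable for extracting compact pieces.

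Finally, I would set $E_i := \{x \in E : f_i(x) \geq \delta\}$. Each $E_i$ is the intersection of the compact set $E$ with the closed sublevel complement $\{f_i \geq \delta\}$, so it is closed in $E$ and therefore compact. Since $f_i \geq \delta > 0$ on $E_i$, one has $E_i \subset E \cap \domain_i$ as required. Covering is immediate: for any $x \in E$, the inequality $\max_i f_i(x) \geq \delta$ forces $f_i(x) \geq \delta$ for at least one index $i$, placing $x$ in the corresponding $E_i$.

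There is no substantive obstacle here; the only book-keeping is the degenerate case $\domain_i = \R^d$, which is handled by the convention $f_i \equiv +\infty$ (so $E_i = E$ works trivially), or by simply noting that in that case one can set $E_i = E$ and $E_{i'} = \emptyset$ for $i' \neq i$ since the empty set is compact.
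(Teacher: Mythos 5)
Your proof is correct, and it shares the paper's core idea of working with the distance functions $f_i(x) := \dist(x,\domain_i^c)$, but the mechanism for defining the $E_i$ differs. You extract a uniform threshold: compactness of $E$ gives $\delta := \min_{x\in E}\max_i f_i(x) > 0$, and then you set $E_i := \{x\in E : f_i(x)\geq\delta\}$. The paper instead defines $E_i := \{x\in E : f_i(x) = \max_{i'} f_{i'}(x)\}$, i.e., the set where $\domain_i$ attains the pointwise maximum. Both definitions yield closed (hence compact) subsets of $E$ whose union is $E$. The argmax route is slightly more economical in that it never needs to produce a uniform $\delta$ — the hypothesis $E\subset\bigcup\domain_{i'}$ is invoked only at the end to show $E_i\subset\domain_i$ — whereas your route front-loads a compactness argument to obtain $\delta$ and then everything else is immediate. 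Your version also has the mild advantage of giving a quantitative margin (each $E_i$ sits a distance $\geq\delta$ from the boundary of $\domain_i$), which is not needed here but is occasionally useful. Either way, there is no gap; this is a valid alternative proof.
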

\begin{proof}
 For each $i$, the set
\[ E_i := \set{x \in E}{ \dist(x,\domain_i^c) = \max_{i'=1,\ldots,N} \dist(x,\domain_{i'}^c)} \]
is clearly a closed and therefore compact subset of $E$, and because the maximum is always attained, $\bigcup_{i'=1}^N E_{i'} = E$. Because $E$ is contained in the union of the $\domain_{i'}$, every $x \in E$ has $\dist(x,\domain_{i'}^c) > 0$ for at least one $i'$, and consequently if $x \in E_i$, then $\dist(x,\domain_i^c) > 0$, meaning that $E_i \subset \domain_i$.
\end{proof}

The final proposition of this section is a simple matrix computation which also appears in \cite{gressman2019}. While elementary to state and prove, it is intimately connected to the so-called First Fundamental Theorem of Invariant Theory (see, for example, Sturmfels \cite{sturmfelsbook}).
\begin{proposition}
Suppose $M$ is a $p \times q$ real matrix with $q \geq p$. Then
\begin{equation} \sum_{j_1,\ldots,j_p=1}^q  \left| \det \begin{bmatrix} M_{1j_1} & \cdots & M_{1j_p} \\ \vdots & \ddots & \vdots \\ M_{p j_1} & \cdots & M_{p j_p} \end{bmatrix} \right|^2 \! = \frac{p!}{p^p} \left[ \inf_{A \in \SL(p,\R)} \sum_{i=1}^p \sum_{j=1}^q |(AM)_{ij}|^2 \right]^p \! . \label{ftgit} \end{equation}
\end{proposition}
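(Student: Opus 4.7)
The plan is to identify both sides with closed-form expressions involving $\det(MM^T)$ and then check they agree. First, for the left-hand side, the key observation is that the determinant vanishes whenever two of the indices $j_1,\ldots,j_p$ coincide, so only $p$-tuples with pairwise distinct entries contribute. Each unordered $p$-element subset of $\{1,\ldots,q\}$ is thus hit exactly $p!$ times, and all $p!$ orderings give the same squared determinant. Combining this with the Cauchy--Binet formula identifies the left-hand side as $p!\det(MM^T)$.

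Next I would rewrite the inner quantity on the right-hand side as
\[
\sum_{i=1}^p\sum_{j=1}^q |(AM)_{ij}|^2 = \operatorname{tr}(A M M^T A^T) = \operatorname{tr}(ASA^T),
\]
where $S := MM^T$ is symmetric positive semidefinite. Diagonalizing $S = U\Lambda U^T$ with $U$ orthogonal and $\Lambda = \operatorname{diag}(\lambda_1,\ldots,\lambda_p)$, one may replace $A$ by $AU$ without changing $|\det A|=1$, which reduces the problem to computing $\inf_{A\in\SL(p,\R)} \operatorname{tr}(A\Lambda A^T)$.

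The heart of the argument is the combination of Hadamard's inequality and AM--GM. Writing $r_i := (A\Lambda A^T)_{ii} = \sum_j A_{ij}^2\lambda_j$, one has $\sum_i r_i = \operatorname{tr}(A\Lambda A^T)$, and since $A\Lambda A^T$ is positive semidefinite, Hadamard gives $\prod_i r_i \geq \det(A\Lambda A^T) = (\det A)^2\det\Lambda = \det S$. The AM--GM inequality then yields
\[
\operatorname{tr}(A\Lambda A^T) \geq p\Bigl(\prod_i r_i\Bigr)^{1/p} \geq p(\det S)^{1/p}.
\]
When $S$ is positive definite, equality is achieved by $A := (\det S)^{1/(2p)} S^{-1/2}$, which has determinant $1$ and gives $A S A^T = (\det S)^{1/p} I$. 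When $\det S = 0$, the infimum drops to $0$: choose $A$ diagonal with one entry shrinking and the rest compensating in a direction where $\Lambda$ has a zero eigenvalue, so the scaling argument pushes the trace to zero, matching $p(\det S)^{1/p} = 0$. Hence in all cases
\[
\inf_{A\in\SL(p,\R)} \sum_{i,j} |(AM)_{ij}|^2 = p\,(\det MM^T)^{1/p}.
\]

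Finally, raising both sides to the $p$-th power and multiplying by $p!/p^p$ yields $p!\det(MM^T)$, which matches the left-hand side. The only step requiring care is the degenerate case $\det MM^T = 0$, since then the infimum is not attained; but the scaling argument handles this cleanly, so there is no substantive obstacle beyond book-keeping.
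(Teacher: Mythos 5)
Your proof is correct, and it diverges from the paper's in a couple of pleasant ways. For the left-hand side, the paper establishes invariance under $M \mapsto AM$ ($\det A = \pm 1$) and $M \mapsto MO$ ($O$ orthogonal), invokes the Singular Value Decomposition to reduce to diagonal $M$, and only then reads off $p!$ times the product of squared singular values. You instead apply Cauchy--Binet directly, obtaining $p!\det(MM^T)$ in one step without needing any invariance discussion; this is a genuine shortcut. For the right-hand side, both arguments ultimately rest on AM--GM applied to the diagonal of $A M M^T A^T$, but the paper passes through singular values of $AM$ (using $\det$-invariance to identify the geometric mean), whereas you use Hadamard's inequality $\prod_i r_i \geq \det(A\Lambda A^T)$ to land on the same bound --- a clean alternative that avoids a second SVD. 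Your explicit minimizer $A = (\det S)^{1/(2p)} S^{-1/2}$ in the nondegenerate case is also tidier than the paper's $\epsilon$-perturbed family $(\sigma_j + \epsilon)^{-1}\prod(\sigma_{j'}+\epsilon)^{1/p}$. One point worth making explicit in a final write-up: after substituting $B = AU$ you should note that the infimum over $\{\det B = 1\}$ coincides with that over $\{\det B = -1\}$, since $\operatorname{tr}(B\Lambda B^T) = \sum_{i,j} B_{ij}^2 \lambda_j$ is insensitive to flipping the sign of a column of $B$; this frees you from tracking whether $\det U = \pm 1$.
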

\begin{proof}
The first thing to observe is that the left-hand side is invariant under any transformation which replaces $M$ by $A M$ for any $A$ with determinant $\pm 1$. Likewise, if $O$ is any orthogonal $q \times q$ matrix, then the left-hand side is unchanged when replacing $M$ by $M O$. To see this, simply observe that, upon making this replacement, the left-hand side equals
\begin{align*}
\sum_{\substack{j_1,\ldots,j_p\\ j'_1,\ldots,j_p' \\j''_1,\ldots,j''_p}=1}^q \det \begin{bmatrix} M_{1j_1'} & \cdots & M_{1j_p'} \\ \vdots & \ddots & \vdots \\ M_{p j_1'} & \cdots & M_{p j_p'} \end{bmatrix} \det \begin{bmatrix} M_{1j_1''} & \cdots & M_{1j_p''} \\ \vdots & \ddots & \vdots \\ M_{p j_1''} & \cdots & M_{p j_p''} \end{bmatrix} \prod_{k=1}^p O_{j'_k j_k} O_{j''_k j_k},
\end{align*}
which can be seen to be independent of $O$ by performing the sums over $j_1,\ldots,j_p$ first. It follows that when computing the left-hand side, one may assume by the Singular Value Decomposition that $M$ is diagonal with nonnegative entries. In particular, this means that
\[ \sum_{j_1=1}^q \cdots \sum_{j_p=1}^q \left| \det \begin{bmatrix} M_{1j_1} & \cdots & M_{1j_p} \\ \vdots & \ddots & \vdots \\ M_{p j_1} & \cdots & M_{p j_p} \end{bmatrix} \right|^2 \]
equals $p!$ times the product of squares of the singular values of $M$ (with the $p!$ arising from the possible ways that $j_1,\ldots,j_p$ may take values $1,\ldots,p$). A consequence of the reasoning above is that the product of singular values is unchanged when $M$ is replaced by $AM$ for some $A$ with determinant $\pm 1$.  

On the other side of the identity, 
\[  \sum_{i=1}^p \sum_{j=1}^q |(AM)_{ij}|^2  \]
is simply the square of the Hilbert-Schmidt norm of $AM$, which is manifestly invariant under the action on both left and right by orthogonal matrices. This means that it must simply equal the sum of squares of the singular values of $AM$. By the arithmetic-geometric mean inequality,
\[ \prod_{j=1}^p (\sigma_j^2)^\frac{1}{p} \leq \frac{1}{p} \sum_{j=1}^p \sigma^2_j \]
where $\sigma_1,\ldots,\sigma_p$ are singular values of $AM$. In particular, this means that the left-hand side of \eqref{ftgit} is never larger than the right-hand side.

To establish equality of the two sides, let $\epsilon > 0$ and let $A$ be the matrix whose eigenvectors coincide with the eigenvectors of $MM^T$ with eigenvalue $(\sigma_j + \epsilon)^{-1} \prod_{j'=1}^p (\sigma_{j'}+\epsilon)^{1/p}$ for the eigenvector of $MM^T$ with original eigenvalue $\sigma_j^2$. The product of $A$'s eigenvalues is $1$, meaning $A \in \SL(p,\R)$. The singular values of $A M$ equal the square root of the eigenvalues of $A M M^T A^T$, which means that they equal $\sigma_j/(\sigma_j+\epsilon) \prod_{j'=1}^{p} (\sigma_j + \epsilon)^{1/p}$ for $j=1,\ldots,p$. In the limit $\epsilon \rightarrow 0^+$, each value tends to $\prod_{j'=1}^p \sigma_j^{1/p}$. Thus the infimum on the right-hand side of \eqref{ftgit} is at most $p$ times the product of singular values of $M$ raised to the power $2/p$, giving that \eqref{ftgit} must be an equality.
\end{proof}

\subsection{Properties of Block Decompositions}

\label{adaptsec} 

This section establishes a number of fundamental properties of block decompositions, including the guaranteed existence of quantitatively ``nice'' adapted factorizations (Corollary \ref{containercorollary}), information about transformations from one adapted factorization to another (Proposition \ref{scaleprop}) and the order of vanishing of pairings of adapted factorizations in a block decomposition (Proposition \ref{blockprop}).

The first key fact to be established is the following: if $V_0(t) \supset V_1(t) \supset \cdots \supset V_m(t)$ is a family of decreasing smooth subspaces of some vector space $V$ and if $(\domain,e^1(t) \wedge \cdots \wedge e^q(t))$ parametrizes $V_0$, then there is always an adapted factorization $\{y^{j,j'}(t)\}_{j,j'}$ of the wedge product $e^1(t) \wedge \cdots \wedge e^q(t)$ with bounded coefficients when expressed in terms of the $e^i(t)$ and which is Nash in a controlled way when the $V_i(t)$ and the $e^i(t)$ are all Nash.
\begin{corollary}
Given smooth families of subspaces $V_0(t) \supset V_1(t) \cdots \supset V_m(t) \supset V_{m+1}(t) = \{0\}$ for $t \in \domain$, let $q_j := \dim V_j - \dim V_{j+1}$ for each $j \in \{0,\ldots,m\}$ and suppose $q_j > 0$ for each $j$.  If $e^1(t),\ldots,e^q(t)$ are smooth $V$-valued functions which form a basis of $V_0(t)$ at each point, then there exists a finite collection of open sets $\{\domain'\}$ with cardinality bounded by some function of $q$ and $m$ such that \label{containercorollary}
\begin{enumerate}
\item There exist $V$-valued smooth functions $y^{j,j'}(t)$ for $j \in \{ 0,\ldots,m \}$ and $j' \in \{1,\ldots,q_j\}$ for $t \in \domain'$ such that each $y^{j,j'}(t)$ is a linear combination of $e^1(t),\ldots,e^q(t)$ with coefficients bounded in magnitude by some quantity depending only on $q$ and $m$.
\item For each $j \in \{0,\ldots,m\}$, the vectors $y^{j,1}(t),\ldots,y^{j,q_j}$ belong to $V_j(t)$ and are linearly independent modulo $V_{j+1}(t)$. In particular, the vectors $y^{j,j'}(t)$ with $j \geq k$ are a basis of $V_k(t)$ for each $k \in \{0,\ldots,m\}$.
\item For each $t \in \domain_m$, 
\[ \bigwedge_{j=0}^m \bigwedge_{j'=1}^{q_j} y^{j,j'}(t) = e^1(t) \wedge \cdots \wedge e^q(t). \]
\item If the spaces $V_1(t),\ldots,V_k(t)$ are Nash of complexity at most $K$ and each $e^i(t)$ is also Nash of complexity at most $K$ for some $K \geq 2$, then each $y^{j,j'}(t)$ is also Nash of complexity at most $K^N$ for some exponent $N$ depending only on dimensions and $m$.
\end{enumerate}
\end{corollary}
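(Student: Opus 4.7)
The plan is to prove the corollary by induction on $m$, the length of the filtration. The base case $m = 0$ is immediate: take a single covering open set $\domain' = \domain$ and set $y^{0,k}(t) := e^k(t)$. For the inductive step, I would first apply Corollary \ref{bndcount} to the family $V_m(t)$ inside $V_0(t)$ with basis $e^1(t),\ldots,e^q(t)$. This yields a bounded collection of open sets $\{\domain'_a\}$ covering $\domain$ and, on each $\domain'_a$, vectors $y^{m,1}(t),\ldots,y^{m,q_m}(t)$ spanning $V_m(t)$, together with a partition of $\{1,\ldots,q\}$ into complementary indices $\{j^{(m)}_i\}_{i=1}^{q-q_m}$ and selected indices $\{\sigma^{(m)}_k\}_{k=1}^{q_m}$ such that $y^{m,k}(t) = \pm e^{\sigma^{(m)}_k}(t) + \sum_i c^k_i(t)\, e^{j^{(m)}_i}(t)$ with $|c^k_i(t)| \leq 2$, and such that $e^{j^{(m)}_1} \wedge \cdots \wedge e^{j^{(m)}_{q-q_m}} \wedge y^{m,1} \wedge \cdots \wedge y^{m,q_m} = e^1 \wedge \cdots \wedge e^q$.

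On each $\domain'_a$, this wedge identity forces $E(t) := \vecspan(e^{j^{(m)}_1}(t),\ldots,e^{j^{(m)}_{q-q_m}}(t))$ to be complementary to $V_m(t)$ inside $V_0(t)$, giving a smooth projection $\pi_t \colon V_0(t) \to E(t)$ with kernel $V_m(t)$. Define $\tilde V_k(t) := \pi_t(V_k(t))$ for $k = 0,\ldots,m-1$. Because $V_m \subset V_k$, the kernel of $\pi_t|_{V_k}$ is exactly $V_m$, so $\dim \tilde V_k = \dim V_k - q_m$ and the $\tilde V_k$ form a smooth (Nash) nested chain $\tilde V_0 = E \supset \tilde V_1 \supset \cdots \supset \tilde V_{m-1}$ with consecutive dimension drops $q_0,q_1,\ldots,q_{m-1}$. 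Applying the induction hypothesis to this shorter chain in $E$ with basis $\{e^{j^{(m)}_i}(t)\}_i$ produces a further finite sub-covering of $\domain'_a$ (of cardinality bounded in terms of $q$ and $m-1$) on whose pieces one obtains vectors $\tilde y^{k,k'}(t) \in E(t)$ for $k < m$ satisfying the desired properties relative to the reduced chain. One then sets $y^{k,k'}(t) := \tilde y^{k,k'}(t)$ viewed as a vector in $V_0(t)$ for $k < m$, retaining the $y^{m,k'}$ already constructed.

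The key observation is that $y^{k,k'} \in V_k$ automatically: since $\tilde y^{k,k'} \in \pi_t(V_k)$, there exists $v \in V_k$ with $\pi_t(v) = \tilde y^{k,k'}$, and the difference $v - \tilde y^{k,k'}$ lies in $\ker \pi_t = V_m \subset V_k$, forcing $\tilde y^{k,k'} \in V_k$. Linear independence modulo $V_{k+1}$ follows by applying $\pi_t$ and invoking the inductive conclusion inside $E$. The wedge identity $\bigwedge_{k<m}\bigwedge_{k'} y^{k,k'} = e^{j^{(m)}_1} \wedge \cdots \wedge e^{j^{(m)}_{q-q_m}}$ coming from induction, together with the identity from Corollary \ref{bndcount}, gives $\bigwedge_{k,k'} y^{k,k'} = e^1 \wedge \cdots \wedge e^q$. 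Boundedness of coefficients in the $e^i$ basis is inherited: the vectors with $k < m$ involve only the $e^{j^{(m)}_i}$ with coefficients bounded by induction, while the $y^{m,k'}$ are bounded directly by Corollary \ref{bndcount}. The Nash complexity propagates because $\pi_t$ and the $\tilde V_k$ are built from Nash data of complexity controlled by a fixed power of $K$, so each recursive step multiplies the complexity exponent by a dimensional constant, and the total number of open sets $\domain'$ is bounded by the product of the bounds at each of the $m+1$ layers. The main technical point will be verifying that $\tilde V_k$ is a Nash family of subspaces of $E$ of the correct controlled complexity; this reduces to the observation that the image of a Nash family of subspaces under a Nash family of constant-rank linear maps is again Nash of comparable complexity, after which all remaining claims follow by direct bookkeeping through the induction.
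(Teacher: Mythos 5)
Your proof is correct in broad strokes, but it takes a genuinely different route from the paper, and the route you chose carries an extra technical cost that the paper's route avoids.

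The paper's induction is top-down: it applies Proposition~\ref{kernelparam} to the interior-multiplication map $\omega \mapsto v_1(t) \wedge \omega$, producing smooth vectors $x^1(t),\ldots,x^{\dim V_1}(t)$ that span $V_1(t)$ and a complementary set of original basis vectors $e^{j_1}(t),\ldots,e^{j_{q_0}}(t)$, and then applies the induction hypothesis directly to the shorter chain $V_1(t) \supset \cdots \supset V_m(t)$ with $x^1(t),\ldots,x^{\dim V_1}(t)$ as the basis. No projections or new subspace families are introduced: the spaces $V_2, \ldots, V_m$ are already contained in $V_1$, and they are already known to be Nash of complexity at most $K$ as subspaces of $V$, so the hypothesis of the inductive step is automatically satisfied.

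Your induction is bottom-up: you strip off $V_m$ first, build a complementary subspace $E(t)$, and then define a new chain $\tilde V_k(t) := \pi_t(V_k(t))$ inside $E(t)$. The logic inside the induction is sound --- the verification that $\tilde y^{k,k'} \in V_k(t)$ via $\pi_t(V_k) = V_k \cap E \subseteq V_k$, the descent of linear independence modulo $V_{k+1}$, and the splicing of wedge identities all hold. The issue is that the chain $\tilde V_0 \supset \cdots \supset \tilde V_{m-1}$ is genuinely new data, and you must therefore certify two things that the paper's proof never needs: (i) that each $\tilde V_k(t)$ is a smooth (and, when relevant, Nash) family of subspaces with complexity controlled by $K^{C}$, and (ii) that the number of local parametrizations needed for the $\tilde V_k$ is bounded. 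You flag (i) at the end, but the reduction to ``images of Nash families under Nash constant-rank maps are Nash'' is itself an unproved lemma equivalent to reapplying Proposition~\ref{kernelparam} to the map $\omega \mapsto v_k(t) \wedge \omega$ restricted to $E(t)$ (noting $\tilde V_k = V_k \cap E$). That extra pass is not wrong, but it multiplies the bookkeeping: each layer of your recursion spawns $m-1$ derived subspace families that need independent Nash-complexity control, whereas the paper's recursion carries only the original $V_j$ forward. The paper's choice to keep all recursion inside the original family $V_j(t)$ and change only the spanning vectors $x^i(t)$ is what makes the complexity tracking clean.

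One smaller item: you invoke Corollary~\ref{bndcount} with $e^1(t),\ldots,e^q(t)$ a basis of $V_0(t)$, but as stated that corollary requires the $e^i(t)$ to be a basis of the ambient $V$. In the present corollary $V_0(t)$ need not equal $V$, so strictly speaking you should be citing Proposition~\ref{kernelparam} (applied to $\omega \mapsto v_m(t) \wedge \omega$ on the span of the $e^i(t)$), exactly as the paper does for the corresponding step with $v_1$.
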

\begin{proof}
The proof is a rather direct consequence of Proposition \ref{kernelparam}.
Appealing to either Corollary \ref{bndcount} or to the definition of a Nash family of smooth subspaces,  begin by restricting to one of boundedly many subsets of $\domain$ such that each $V_j(t)$ is locally parametrized on this subset by some decomposable form $v_j(t)$. If the families of subspaces are Nash, the reduction allows one to assume that each $v_j(t)$ is complexity at most $K$ on each of these subsets. 

The proof follows by induction on $m$. In the case $m=0$, there is essentially nothing to prove because one may simply set $y^{0,j'}(t) = e^{j'}(t)$ for each $j'$ and observe that these $y^{0,j'}$ satisfy all the desired properties using a trivial decomposition of $\domain$ into a single piece.

One may therefore assume that $m \geq 1$.  Apply Proposition \ref{kernelparam} with map $M(t)$ which sends $\omega$ to $v_1(t) \wedge \omega$. This gives a decomposition of the domain of $v_1(t)$ into boundedly many pieces on which:
\begin{itemize}
\item There exist $V$-valued functions $x^1(t),\ldots,x^{\dim V_1}(t)$ which span the kernel of $M(t)$, which happens to be exactly $V_1(t)$.
\item The functions $x^i(t)$ are expressible in the basis $e^1(t),\ldots,e^{q}(t)$ with coefficients which are at most $2$. There is also an exponent $N$ depending only on dimensions such that if the $e^i(t)$ are Nash with complexity at most $K$ and $v_1(t)$ is as well, then the $x^i(t)$ will be Nash of complexity at most $K^N$.
\item There exist $1 < j_1 < \cdots < j_{q_0}$ such that
\[ e^{j_1}(t) \wedge \cdots \wedge e^{j_{q_0}}(t) \wedge x^1(t) \wedge \cdots \wedge x^{\dim V_1}(t) = e^1(t) \wedge \cdots \wedge e^{q}(t) \]
at all points on this piece of the domain.
\end{itemize}
Now set $y^{0,1}(t),\ldots,y^{0,q_0}(t)$ equal $e^{j_1}(t),\ldots,e^{j_{q_0}}(t)$ and (by induction) apply the current corollary to the shorter sequence $V_1(t) \supset \cdots \supset V_{m}(t)$ using the local basis $x^1(t),\ldots,x^{\dim V_1}(t)$ of $V_1(t)$.  By hypothesis, there exist $y^{j,j'}(t)$ with $j \in \{1,\ldots,m\}$ and $j' \in \{1,\ldots,q_j\}$ which are expressible as linear combinations of the $x^i$ with bounded coefficients such that $V_j(t)$ is the span of all $y^{k,k'}$ for $k \geq j$, the $y^{j,j'}(t)$ have suitably bounded Nash complexity, and
\[ \bigwedge_{j=1}^m \bigwedge_{j'=1}^{q_j} y^{j,j'}(t) = x^1(t) \wedge \cdots \wedge x^{\dim V_1}(t). \]
Since each $x^i$ is itself expressible as a linear combination of the $e^i$ with bounded coefficients (and controlled Nash complexity), each $y^{j,j'}$ will have bounded coefficients when expressed in terms of the original $e^i(t)$. The rest follows by virtue of the observation that
\begin{align*}
e^1(t) \wedge \cdots \wedge e^{q}(t) & = y^{0,1}(t) \wedge \cdots \wedge y^{0,q_0}(t) \wedge x^1(t) \wedge \cdots \wedge x^{\dim V_1}(t) \\
& = \bigwedge_{j=0}^{m} \bigwedge_{j'=1}^{q_j} y^{j,j'}(t)
\end{align*}
giving the desired factorization property.
\end{proof}

Looking at the quantity on the right-hand side of \eqref{nondegenhyp} in Theorem \ref{bigtheorem1}, a rough analogy between it and the definition \eqref{stdnorm} for polynomial-valued matrices $P$ should be clear. One important distinction, however, is that the $\overline{u}^{i,i'}$ and $\overline{v}^{j,j'}$ of \eqref{nondegenhyp} are not fully generic bases but rather are adapted factorizations. This means that the underlying group of symmetries that governs \eqref{nondegenhyp} could in principle be smaller than $\SL(q,\R) \times \SL(p,\R) \times \GL(d,\R)$.  For this reason, it is necessary to understand more precisely the relationship between different adapted factorizations of the same form.
The following proposition gives such an analysis. The key observation is that a natural sequence of scaling parameters arise which relate the vectors in one factorization to the vectors in the other. This proposition operates at a purely pointwise level, so there is no need to reference any functional parameter $t$ at this stage.
\begin{proposition} Let $V = V_0 \supset \cdots \supset V_m \supset V_{m+1} = \{0\}$ be subsets of some vector space $V$ and let $q_i := \dim V_i - \dim V_{i+1}$ for each $i \in \{0,\ldots,m\}$.
Suppose that $v^{j,j'}$ and $\overline{v}^{j,j'}$ are vectors in $V$ for $j \in \{0,\ldots,m\}$ and $j' \in \{1,\ldots,q_j\}$ having the properties that \label{scaleprop}
\begin{enumerate}
\item For each $j \in \{0,\ldots,m\}$ and each $j' \in \{1,\ldots,q_j\}$, $v^{j,j'} \in V_j$ and $\overline{v}^{j,j'} \in V_j$.
\item The vectors satisfy
\[ \bigwedge_{j=0}^m \bigwedge_{j'=1}^{q_j} v^{j,j'} = \bigwedge_{j=0}^m \bigwedge_{j'=1}^{q_j} \overline{v}^{j,j'} \neq 0. \]
\end{enumerate} 
Then there exist nonnegative real numbers $\delta_0,\ldots,\delta_m$ with $\delta_0 \cdots \delta_m = 1$ such that for each interval $I := [a,b] \subset \{0,\ldots,m\}$, there is a matrix $M^{I}$ of determinant $\pm 1$ (whose entries are denoted $M^{I,(j,j')}_{(k,k')}$) and
vectors $E^{j,j'}$ belonging to $V_{b+1}$ such that
\begin{equation} \overline{v}^{j,j'} = \left(\prod_{\ell \in I} \delta_\ell \right)^{(\sum_{\ell \in I} q_\ell)^{-1}} \sum_{k \in I} \sum_{k' = 1}^{q_k} M^{I,(j,j')}_{(k,k')} v^{k,k'} + E^{j,j'} \label{samedeltas} \end{equation}
for all $j \in I$ and all $j' \in \{1,\ldots,q_j\}$.
\end{proposition}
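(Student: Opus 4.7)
The plan is to recognize that the whole statement is an exercise in block-upper-triangular linear algebra once we observe that $\{v^{k,k'}\}_{k\geq j,\,k'}$ is a basis of $V_j$ for every $j$. This basis property is immediate from a dimension count: since $v^{k,k'}\in V_k\subset V_j$ for $k\geq j$, the $\sum_{k\geq j}q_k=\dim V_j$ vectors span a subspace of $V_j$; the nonvanishing of the total wedge product forces them to be linearly independent, so they must span all of $V_j$. In particular, reducing modulo $V_{j+1}$, the $v^{j,1},\dots,v^{j,q_j}$ project to a basis of $V_j/V_{j+1}$, and similarly for the $\overline{v}^{j,j'}$.

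Using this basis, I would expand each $\overline{v}^{j,j'}$ uniquely as
\[ \overline{v}^{j,j'} = \sum_{k\geq j}\sum_{k'=1}^{q_k} A^{(j,j')}_{(k,k')}\, v^{k,k'}, \]
so that the change-of-basis matrix $A$ is block upper triangular with respect to the grading by $j$. Its diagonal blocks $A_j$ (of size $q_j\times q_j$) record how the $\overline{v}^{j,j'}\bmod V_{j+1}$ sit in the basis $\{v^{j,j'}\bmod V_{j+1}\}_{j'}$. The hypothesis that the total wedge products agree gives $\det A=1$, and block upper triangularity yields $\prod_{j=0}^{m}\det A_j=1$. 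I would then define $\delta_j:=|\det A_j|$; these are automatically nonnegative (and in fact strictly positive since their product is $1$), and they satisfy $\delta_0\cdots\delta_m=1$, which gives the claimed scaling parameters.

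Fix an interval $I=[a,b]$. For $j\in I$, the sum above naturally splits according to whether $k\in I$ or $k>b$. Because $A^{(j,j')}_{(k,k')}=0$ when $k<j$, the part with $k\in I$ is
\[ \sum_{k\in I}\sum_{k'=1}^{q_k} A^{(j,j')}_{(k,k')}\, v^{k,k'}, \]
while the part with $k>b$ contributes a vector $E^{j,j'}:=\sum_{k>b}\sum_{k'} A^{(j,j')}_{(k,k')} v^{k,k'}\in V_{b+1}$ (since each $v^{k,k'}\in V_k\subset V_{b+1}$ when $k>b$). Now set $n_I:=\sum_{\ell\in I}q_\ell$, $\delta_I:=\prod_{\ell\in I}\delta_\ell$, and
\[ M^{I,(j,j')}_{(k,k')} := \delta_I^{-1/n_I}\, A^{(j,j')}_{(k,k')} \qquad (j,k\in I). \]
The substitution yields \eqref{samedeltas} by construction. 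The only nontrivial thing left to verify is $\det M^I=\pm1$: the $n_I\times n_I$ matrix $M^I$ is obtained from the principal $I$-block $A_{[a,b]}$ of $A$ (itself block upper triangular with diagonal blocks $A_j$, $j\in I$) by scaling with $\delta_I^{-1/n_I}$, so $\det M^I=\delta_I^{-1}\prod_{j\in I}\det A_j$, which has absolute value $\delta_I^{-1}\prod_{j\in I}\delta_j=1$.

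There is no real obstacle; the argument is essentially bookkeeping. The only subtlety worth stating carefully is the initial basis claim (since the proposition only postulates linear independence modulo $V_{j+1}$ via the nonvanishing of the joint wedge, not directly), and making sure the scaling exponents match so that the determinant of $M^I$ comes out to $\pm 1$ rather than some fractional power of $\delta_I$.
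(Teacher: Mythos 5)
Your proof is correct and takes essentially the same approach as the paper: expand the $\overline{v}^{j,j'}$ in the basis $\{v^{k,k'}\}_{k\geq j}$ of $V_j$, observe the change-of-basis matrix $A$ is block upper triangular, set $\delta_j:=|\det A_j|$, and rescale the $I$-block by $\delta_I^{-1/n_I}$. The only cosmetic difference is that you invoke the block-triangular determinant formula directly to get $\prod_j\det A_j=\det A=1$ (which immediately shows each $A_j$ is nonsingular), whereas the paper proves the analogous identity \eqref{partialproduct} by induction and checks invertibility of each $A_j$ separately via a wedge-product argument; both routes are sound and essentially equivalent.
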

\begin{proof} 
Because the wedge products of the $v^{j,j'}$ and $\overline{v}^{j,j'}$ are equal and nonzero, it follows that each $v^{j,j'}$ is uniquely expressible as a linear combination of the $\overline{v}^{j,j'}$ and vice-versa. Moreover, by counting dimensions, one must have that
\[ V_j = \vecspan_{\substack{k \geq j \\ k' \in \{1,\ldots,q_k\}}} v^{k,k'} = \vecspan_{\substack{k \geq j \\ k' \in \{1,\ldots,q_k\}}} \overline{v}^{k,k'} \]
for each $j$, which means that each $\overline{v}^{j,j'}$ must be uniquely expressible as a linear combination of $v^{k,k'}$ for pairs $(k,k')$ with $k \geq j$. This implies the existence of unique matrices $A^{j}$ and constants $c_{j,j',k,k''}$ such that
\begin{equation} \overline{v}^{j,j'} = \sum_{j''=1}^{q_j} A^{j,j'}_{j''} v^{j,j''} + \sum_{k>j} \sum_{k''} c_{j,j',k,k''} v^{k,k''} \label{lot} \end{equation}
for each $j \in \{0,\ldots,m\}$ and $j' \in \{1,\ldots,q_j\}$. If any matrix $A^j$ were not invertible, then some linear combination of $\overline{v}^{j,j'}$ for $j' \in \{1,\ldots,p_j\}$ would belong to the span of $v^{k,j'}$ for $k > j$ and consequently the wedge product $\bigwedge_{k > j} \bigwedge_{k'=1}^{q_k} \overline{v}^{k,k'}$ would have to vanish (which is never the case). Thus we may define the nonzero real number $\delta_j$ to be the absolute value of the determinant of $A^j$ for each $j$.
By induction, it is straightforward to see that
\begin{equation} \bigwedge_{j=k}^m \bigwedge_{j'=1}^{q_j} \overline{v}^{j,j'}  = \left( \prod_{j=k}^m \det A^j \right) \bigwedge_{j=k}^m \bigwedge_{j'=1}^{q_j} v^{j,j'} \label{partialproduct} \end{equation}
for $k=m,m-1,\ldots,0$. The case $k=m$ is trivial, and in cases of smaller $k$, one need simply use the induction hypothesis combined with the observation that any vector belonging to the span of $v^{j,j'}$ for $j \geq k+1$ will be annihilated when wedged with
\[ \bigwedge_{j=k+1}^m \bigwedge_{j'=1}^{q_j} {v}^{j,j'} \]
or the corresponding expression in $\overline{v}^{j,j'}$. In other words, one can rely on the fact that modulo the span of the $v^{j,j'}$ for higher $j$, one can assume without loss of generality that the error term with coefficients $c_{j,j',k,j''}$ vanishes. Using \eqref{partialproduct} and the hypotheses of the proposition give that the product $\delta_0 \cdots \delta_m = 1$.

Now if $I := [a,b] \subset \{0,\ldots,m\}$ is an interval, the formula \eqref{lot} continues to apply for those $j \in [a,b]$ to give some matrix $A^I$ such that
\[ \overline{v}^{j,j'} = \sum_{k \in I} \sum_{k'=1}^{q_k} A^{I,(j,j')}_{(k,k')} v^{k,k'} + E^{j,j'} \]
for each $j \in I$ and $j' \in \{1,\ldots,q_j\}$, where $E^{j,j'}$ belongs to the span of the $v^{k,k'}$ for $k > b$. The matrix $A^I$ must have block triangular form in the sense that $A^{I,(j,j')}_{(k,k')} = 0$ when $k < j$. The blocks on the diagonal are exactly the matrices $A^a,\ldots,A^b$. Therefore the determinant of $A^I$ is equal to $\delta_a \cdots \delta_b$ up to sign. The formula \eqref{samedeltas} now can be seen to hold by taking $M^{I}$ to equal $A^I$ times $(\delta_a \cdots \delta_b)^{-1/(q_a + \cdots + q_b)}$ so that its determinant is $\pm 1$.
\end{proof}

We return now to the definition of block decompositions in order to establish an important consequence.  The definition refers to the interior product, which is not an especially common operation to encounter, but its role happens to be relatively minor and there is certain utility which comes from defining block decompositions independently of any particular choice of factorization of the underlying forms. For the present purposes, it simply suffices to know that the interior product is linear in both factors and satisfies
\begin{equation} (u^1 \wedge \cdots \wedge u^p) \cdot \omega = \sum_{i=1}^p (-1)^{i-1} (u^i \cdot \omega) u^1 \wedge \cdots \wedge \widehat{u^i} \wedge \cdots \wedge u^p \label{interiorprod} \end{equation}
when $u^1,\ldots,u^p \in V^*$ and $\omega \in V$, where $\widehat{\cdot}$ denotes omission. The formula is analogous when the roles of $V$ and $V^*$ are interchanged.

\begin{proposition}
Suppose that the pair $(U_0,V)$ admits a block decomposition. \label{blockprop}
If $\overline{u}(s)$ is any smooth $V^*$-valued function with values in $U_i(s)$ for each $s$ and $\overline{v}(t)$ is any smooth $V$-valued function with values in $V_j(t)$ for each $t$, then 
\begin{equation} (\partial^{\alpha} \overline{u})(t) \cdot (\partial^{\beta} \overline{v})(t) = 0 \label{diagest} \end{equation}
whenever $|\alpha| + |\beta| < D_{ij}$.
\end{proposition}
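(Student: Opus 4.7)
The plan is to reduce the statement to a pointwise identity on smooth local bases of $U_i(t)$ and $V_j(t)$ and then prove that identity via two short inductions driven by the multilinear Leibniz rule and the block decomposition conditions~\eqref{reductiondef}.

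First, fix $t_0 \in \domain$ and pass to a neighborhood on which both $U_i$ and $V_j$ admit smooth local bases---that is, write $u_i(t) = u^1(t) \wedge \cdots \wedge u^{\tilde p}(t)$ and $v_j(t) = v^1(t) \wedge \cdots \wedge v^{\tilde r}(t)$ for smooth linearly independent vectors $u^k(t) \in U_i(t)$ and $v^\ell(t) \in V_j(t)$, where $\tilde p := \dim U_i$ and $\tilde r := \dim V_j$. The existence of such bases follows from Corollary~\ref{containercorollary} applied separately to each family of subspaces. Expanding $\overline{u}(s) = \sum_k a_k(s) u^k(s)$ and $\overline{v}(t) = \sum_\ell b_\ell(t) v^\ell(t)$ in these bases and applying the product rule, the desired conclusion~\eqref{diagest} follows as soon as one knows the basis-vector analogue
\[ (\partial^{\alpha'} u^k)(t) \cdot (\partial^{\beta'} v^\ell)(t) = 0, \qquad |\alpha'| + |\beta'| < D_{ij}, \]
since each term in the resulting expansion of the left-hand side of~\eqref{diagest} has total derivative order bounded by $|\alpha|+|\beta|$.

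Second, I would extract two ``one-sided'' specializations of the hypothesis. By induction on $|\gamma|$, I claim that $(\partial^\gamma u^k)(t) \cdot \overline{v} = 0$ for every $k$, every $\overline{v} \in V_j(t)$, and every $|\gamma| < D_{ij}$. The hypothesis gives $(\partial^\gamma u_i)(t) \cdot \overline{v} = 0$; expanding $\partial^\gamma u_i$ by the multinomial Leibniz rule and applying~\eqref{interiorprod} produces
\[ \sum_{\gamma_1 + \cdots + \gamma_{\tilde p} = \gamma} \binom{\gamma}{\gamma_1, \ldots, \gamma_{\tilde p}} \sum_k (-1)^{k-1} \bigl((\partial^{\gamma_k} u^k)(t) \cdot \overline{v}\bigr) \bigwedge_{\ell \neq k} (\partial^{\gamma_\ell} u^\ell)(t) = 0. \]
Terms with $|\gamma_k| < |\gamma|$ vanish by the inductive hypothesis, so only the $\tilde p$ terms in which the full derivative $\partial^\gamma$ lands on a single factor $u^k$ survive. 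Because the complementary wedges $\bigwedge_{\ell \neq k} u^\ell(t)$ are linearly independent in $\Lambda^{\tilde p - 1}(V^*)$, each surviving pairing must vanish individually, completing the induction. A completely symmetric argument starting from $\overline{u} \cdot (\partial^\gamma v_j)(t) = 0$ yields the mirror identity $\overline{u} \cdot (\partial^\gamma v^\ell)(t) = 0$ for every $\ell$, $\overline{u} \in U_i(t)$, and $|\gamma| < D_{ij}$.

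Third, upgrade these two one-sided identities to the fully mixed identity by a final induction on $|\alpha|$. For fixed $k, \ell$ and a multiindex $\beta$ with $|\beta| < D_{ij}$, the second one-sided identity applied at $\overline{u} = u^k(t)$ shows that the smooth function $f(t) := u^k(t) \cdot (\partial^\beta v^\ell)(t)$ is identically zero, so $\partial^\alpha f \equiv 0$. The product rule rewrites this as
\[ \sum_{\alpha' \leq \alpha} \binom{\alpha}{\alpha'} (\partial^{\alpha'} u^k)(t) \cdot (\partial^{\alpha - \alpha' + \beta} v^\ell)(t) = 0. \]
Whenever $|\alpha| + |\beta| < D_{ij}$, every summand has total derivative order exactly $|\alpha|+|\beta|$: the $\alpha' = 0$ term vanishes by the first one-sided identity, each $0 < |\alpha'| < |\alpha|$ term vanishes by inductive hypothesis (applied with $\beta' = \alpha - \alpha' + \beta$, whose magnitude still obeys $|\alpha'|+|\beta'|<D_{ij}$), and only the $\alpha' = \alpha$ term remains, forcing $(\partial^\alpha u^k)(t) \cdot (\partial^\beta v^\ell)(t) = 0$. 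Combined with the reduction of the first paragraph, this completes the proof. The only nontrivial obstacle is the decoupling in the second paragraph, where one must pass from the wedge-form hypothesis~\eqref{reductiondef} to information about individual basis pairings; this is handled cleanly by the observation that the inductive hypothesis on $|\gamma|$ kills every Leibniz summand except those whose complementary factors form linearly independent wedges, after which the remaining recursions are formal.
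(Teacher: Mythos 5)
Your argument is essentially the same one the paper uses: reduce to smooth local bases, prove the one-sided vanishings $(\partial^\gamma u^k)(t)\cdot\overline v=0$ and $\overline u\cdot(\partial^\gamma v^\ell)(t)=0$ by induction on $|\gamma|$ using the Leibniz expansion of $\partial^\gamma u_i$ (resp.\ $\partial^\gamma v_j$), the interior-product formula \eqref{interiorprod}, and linear independence of the complementary $(p-1)$-wedges, and then bootstrap to the mixed identity by differentiating the identically-zero function $u^k(t)\cdot(\partial^\beta v^\ell)(t)$ and inducting on $|\alpha|$. One small slip: in your final induction the $\alpha'=0$ term is $u^k(t)\cdot(\partial^{\alpha+\beta}v^\ell)(t)$, which is not covered by your ``first'' identity $(\partial^\gamma u^k)(t)\cdot\overline v=0$ since $(\partial^{\alpha+\beta}v^\ell)(t)$ need not lie in $V_j(t)$; that term instead vanishes either by the mirror identity (applied with $\gamma=\alpha+\beta$, using $|\alpha+\beta|<D_{ij}$) or simply by the inductive hypothesis with $\alpha'=0$.
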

\begin{proof}
Using a local parametrization $(\domain_0,u_i)$ for $U_i(t)$ near any particular $t \in \domain$, suppose that $u_i(t) = u_i^1(t) \wedge \cdots \wedge u^p_i(t)$ for some smooth $u_i^1(t),\ldots,u_i^p(t)$. Because $\overline{u}$ takes values in $U_i(t)$, we may locally write $\overline{u}(t)$ as a linear combination of $u_i^1(t),\ldots,u_i^p(t)$ with smooth coefficients. By the product rule, it suffices to show that \eqref{diagest} holds with $\overline{u}(t)$ being equal to $u_i^{i'}(t)$ for some $i'$.

Suppose first that $|\beta| = 0$. For any $|\alpha| < D_{ij}$, because $\overline{v}(t)$ takes values in $V_j(t)$, $(\partial^{\alpha}  u_i)(t) \cdot \overline{v}(t) = 0$. But the product rule guarantees that
\begin{align*} (& \partial^{\alpha}  u_i)(t) \cdot \overline{v}(t) = 0  = \\ &
\sum_{\substack{\alpha_1 + \cdots + \alpha_p = \alpha\\i'=1,\ldots,p}} \frac{\alpha! (-1)^{i'-1}}{\alpha_1! \cdots \alpha_p!}  ((\partial^{\alpha_{i'}} u^{i'}_i)(t) \cdot \overline{v}(t))(\partial^{\alpha_1} u^1_i \wedge \cdots \wedge \widehat{u^{i'}_i} \wedge \cdots \wedge \partial^{\alpha_p} u^p_i)(t)
\end{align*}
for each $t$.
If $\alpha = 0$, we can use the fact that the forms $(u_i^1 \wedge \cdots \wedge \widehat{u_i^{i'}} \wedge \cdots \wedge u_{i}^p)(t)$ for $i'=1,\ldots,p$ are linearly independent at every $t$ to conclude that $u_i^{i'}(t) \cdot \overline{v}(t) = 0$ for each $i'$. Supposing that it has been shown that $(\partial^{\alpha} u_i^{i'} )(t)\cdot \overline{v}(t)$ vanishes for all $|\alpha| < k$, if $|\alpha| = k$, then
\begin{align*} (& \partial^{\alpha}  u_i)(t) \cdot \overline{v}(t) = 0  =
 \sum_{i'=1}^p (-1)^{i'-1} ((\partial^{\alpha} u^{i'}_i)(t) \cdot \overline{v}(t))(u^1_i \wedge \cdots \wedge \widehat{u^{i'}_i} \wedge \cdots \wedge u^p_i)(t)
\end{align*}
because every other term in the product formula expansion has a leading coefficient of the form $(\partial^{\alpha_{i'}} u^{i'}_i(t)) \cdot \overline{v}(t)$ for $|\alpha_i|  < k$, which is known by induction to equal zero. But now the same argument just applied also gives that $(\partial^\alpha u_i^{i'})(t) \cdot \overline{v}(t) = 0$ for this $\alpha$ as well.

It is now known that $(\partial^\alpha \overline{u})(t) \cdot \overline{v}(t)$ vanishes identically in $t$ when $|\alpha| < D_{ij}$, which means that
\[ \partial^{\beta} \left[ (\partial^\alpha \overline{u})(t) \cdot \overline{v}(t) \right] = 0 \]
when $\beta$ is any multiindex and $|\alpha| < D_{ij}$. Assuming that it is known that $(\partial^\alpha \overline{u})(t) \cdot (\partial^\beta \overline{v})(t) = 0$ when $|\alpha| + |\beta| < D_{ij}$ and $|\beta| \leq k$ (which has been established in the case $k=0$), when $|\beta| = k+1$,
$\partial^\beta \left[ (\partial^\alpha \overline{u})(t) \cdot \overline{v}(t) \right]$ equals $(\partial^\alpha \overline{u})(t) \cdot (\partial^\beta \overline{v})(t)$ plus terms which are known to be zero (because each such term has $k$ or fewer derivatives on $\overline{v}(t)$). Thus \[ (\partial^\alpha \overline{u})(t) \cdot (\partial^\beta \overline{v})(t) = \partial^\beta \left[ (\partial^\alpha \overline{u})(t) \cdot \overline{v}(t) \right] = 0 \]
as desired.
\end{proof}

A small but important remark: observe that the full force of the definition of block decompositions was not used in the above proposition. Specifically, the proposition used the condition that $(\partial^{\alpha} u_i(t)) \cdot \overline{v}(t) = 0$ for $|\alpha| < D_{ij}$ when $u_i$ parametrizes $U_i(t)$ and $\overline{v}$ belongs to $V_j(t)$, but the reverse condition, namely that $\overline{u}(t) \cdot \partial^{\alpha} v_j(t) = 0$ when $v_j$ parametrizes $V_j(t)$ and $\overline{u}$ belongs to $U_i(t)$, is not needed. This is because there is redundancy in the definition and either condition implies the other. The fact that the former implies the latter follows from the proof of Proposition \ref{blockprop} since
\begin{align*}
\overline{u}& (t)  \cdot \partial^\alpha v_j(t) =\\ & \sum_{\substack{\alpha_1+\cdots + \alpha_q = \alpha \\ i'=1,\ldots,q}} \frac{\alpha!}{\alpha_1! \cdots \alpha_q!}  (-1)^{i'-1} (\overline{u}(t) \cdot (\partial^{\alpha_{i'}} v^{i'}_j)(t) )(v^1_j \wedge \cdots \wedge \widehat{v^{i'}_j} \wedge \cdots \wedge v^q_j)(t)
\end{align*}
and every coefficient $\overline{u}(t) \cdot (\partial^{\alpha_{i'}} v^{i'}_j)(t) $ vanishes when $|\alpha| < D_{ij}$. The reverse implication follows by symmetry in $V^*$ and $V$.

An important corollary of Proposition \ref{blockprop} is the following:
\begin{corollary}
Suppose that the pair $(U_0,V)$ admits a block decomposition. \label{commutecorollary}
If $\overline{u}(s)$ is any smooth $V^*$-valued function with values in $U_i(s)$ for each $s$ and $\overline{v}^{1}(t),\ldots,v^{q'}(t)$ are smooth $V$-valued functions with values in $V_j(t)$ for each $t$, then 
\begin{equation} (\partial^{\alpha} \overline{u})(t) \cdot \left(\partial^{\beta} \sum_{j'=1}^{q'} B_{j'j''} \overline{v}^{j''} \right)(t) = \sum_{j''=1}^{q'} B_{j'j''}(t) (\partial^{\alpha} \overline{u})(t) \cdot (\partial^{\beta}  \overline{v}^{j''})(t)\label{diagest2} \end{equation}
whenever $|\alpha| + |\beta| = D_{ij}$ and $B_{j'j''}(t)$ are any smooth functions.
\end{corollary}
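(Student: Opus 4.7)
The plan is to expand the outer derivative on the right-hand factor using the Leibniz rule and then invoke Proposition \ref{blockprop} to annihilate all but the ``no-derivative-on-$B$'' term. Concretely, for each fixed pair $(j', j'')$, the product rule gives
\[ \partial^{\beta}\left( B_{j'j''}(t)\, \overline{v}^{j''}(t) \right) = \sum_{\gamma \leq \beta} \binom{\beta}{\gamma} (\partial^{\gamma} B_{j'j''})(t)\, (\partial^{\beta - \gamma} \overline{v}^{j''})(t), \]
so after summing over $j''$ and pairing with $(\partial^{\alpha} \overline{u})(t)$ via bilinearity of the interior product, one obtains
\[ \sum_{j''=1}^{q'} \sum_{\gamma \leq \beta} \binom{\beta}{\gamma} (\partial^{\gamma} B_{j'j''})(t)\, \bigl[(\partial^{\alpha} \overline{u})(t) \cdot (\partial^{\beta - \gamma} \overline{v}^{j''})(t)\bigr]. \]

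The key observation is that for every multiindex $\gamma$ with $|\gamma| \geq 1$, the surviving pairing has total derivative order $|\alpha| + |\beta - \gamma| = D_{ij} - |\gamma| < D_{ij}$. Since $\overline{u}(t) \in U_i(t)$ and each $\overline{v}^{j''}(t) \in V_j(t)$ by hypothesis, Proposition \ref{blockprop} immediately forces $(\partial^{\alpha} \overline{u})(t) \cdot (\partial^{\beta - \gamma} \overline{v}^{j''})(t) = 0$ for every such $\gamma$. Consequently, the only surviving contribution in the Leibniz expansion is the term $\gamma = 0$, namely
\[ \sum_{j''=1}^{q'} B_{j'j''}(t)\, (\partial^{\alpha} \overline{u})(t) \cdot (\partial^{\beta} \overline{v}^{j''})(t), \]
which is exactly the right-hand side of \eqref{diagest2}. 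There is no real obstacle here beyond bookkeeping; the corollary is essentially the statement that, up to the critical total order $D_{ij}$, multiplication by smooth scalar coefficients commutes with differentiation inside the pairing, because all ``derivative leakage'' onto the coefficients produces pairings that are already killed by Proposition \ref{blockprop}.
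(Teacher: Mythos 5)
Your proof is correct and follows essentially the same approach as the paper: both expand via the Leibniz rule and observe that every term with derivatives falling on the coefficient functions yields a pairing of total order strictly less than $D_{ij}$, which vanishes by Proposition \ref{blockprop}. The paper states this more compactly, but the underlying argument is identical.
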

\begin{proof}
By the product rule, the difference between the left- and right-hand sides of \eqref{diagest2} can be written as a smooth linear combination of terms $(\partial^{\alpha} \overline{u})(t) \cdot (\partial^{\beta'}  \overline{v}^{j''})(t)$ where $|\beta'| < |\beta|$, meaning by \eqref{diagest} that all such terms vanish.
\end{proof}

For later reference, the combined contents of Corollary \ref{containercorollary} and Proposition \ref{blockprop} as they apply in the specific context of block decompositions are recorded in the following lemma.
\begin{lemma}
Suppose $u^1(t),\ldots,u^p(t)$ are smooth $V^*$-valued functions on some domain $\domain$ such that $u(t) = u^1(t) \wedge \cdots \wedge u^p(t)$ is nonvanishing at all points of $\domain$.  Let $U_0(t)$ be the smooth family of subspaces of $V^*$ which is locally parametrized by $(\domain,u)$. If $\omega^1,\ldots,\omega^q$ is a volume-normalized basis of $V$ and if $(U_0,V)$ admits a block decomposition $U_0(t) \supset \cdots \supset U_{\mstar}(t) \supset U_{\mstar+1}(t) = \{0\}$ and $V = V_0(t) \supset \cdots \supset V_{m}(t) \supset V_{m+1}(t) = \{0\}$ with formal degrees $D_{ij}$ on $\domain \subset \R^d$, there exist boundedly many subsets of $\Omega$ (with the number being controlled by a function of $p$ and $q$ only) on which there are smooth $V^*$-valued functions $\{\overline{u}^{i,i'}(s)\}_{i,i'}$ for $i \in \{0,\ldots,\mstar\}$ and $i' \in \{1,\ldots,p_i\}$ and smooth $V$-valued functions $\{\overline{v}^{j,j'}(t)\}_{j,j'}$ for $j \in \{0,\ldots,m\}$, $j' \in \{1,\ldots,q_j\}$ such that \label{adaptedfactorization}
\begin{enumerate}
\item The $\overline{u}^{i,i'}(t)$ equal smooth linear combinations of the $u^1(t),\ldots,u^p(t)$ with coefficient functions bounded in magnitude by some constant depending only on $p$ and $q$. Likewise the $\overline{v}^{j,j'}(t)$ are expressible as smooth linear combinations of the $\omega^1,\ldots,\omega^q$ with coefficients bounded in magnitude by some constant depending on $p$ and $q$.
\item If $u(s)$ is Nash of complexity at most $K$ (for $K \geq 2$) on $\domain$, then each $\overline{u}^{i,i'}(t)$ and $\overline{v}^{j,j'}(t)$ is Nash of complexity at most $K^N$ for some $N$ depending only on $p$ and $q$.
\item For each $i \in \{0,\ldots,\mstar\}$ and each $i' \in \{1,\ldots, p_i\}$, $\overline{u}^{i,i'}(s)$ takes values in $U_i(s)$ at each $s$. Similarly, for each $j \in \{0,\ldots,m\}$ and $j' \in \{1,\ldots,q_j \}$, $\overline{v}^{j,j'}(t)$ takes values in $V_j(t)$ for each $t$. At all points $t$ in the domain of the $\overline{u}^{i,i'}$ and $\overline{v}^{j,j'}$,
\begin{equation} (\partial^\alpha \overline{u}^{i,i'})(t) \cdot (\partial^\beta \overline{v}^{j,j'})(t) = 0 \label{annihilated} \end{equation}
when $|\alpha| + |\beta| < D_{ij}$.
\item For each $t$ in this restricted domain, 
\begin{align*}
\bigwedge_{i=0}^{\mstar} \bigwedge_{i'=1}^{p_i} \overline{u}^{i,i'}(t) & = u(t), \\
\bigwedge_{j=0}^{m} \bigwedge_{j'=1}^{q_j} \overline{v}^{j,j'}(t) & = \omega^1 \wedge \cdots \wedge \omega^q,
\end{align*}
i.e., $\{\overline{u}^{i,i'}(t)\}_{i,i'}$ is an adapted factorization of $u(t)$ and $\{\overline{v}^{j,j'}\}_{j,j'}$ is an adapted factorization of $V$.
\end{enumerate}
\end{lemma}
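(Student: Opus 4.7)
The plan is to assemble the lemma as a direct synthesis of Corollary \ref{containercorollary} and Proposition \ref{blockprop}, with the only real work being to combine the two adapted factorizations on a common open cover.

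First, I would construct the factorization of $u(t)$. Since $u^1(t),\ldots,u^p(t)$ form a basis of $U_0(t)$ at every point (because $u(t) = u^1(t) \wedge \cdots \wedge u^p(t)$ is nonvanishing), I apply Corollary \ref{containercorollary} to the decreasing chain $U_0(t) \supset \cdots \supset U_{\mstar}(t) \supset \{0\}$ inside $V^*$, using this basis. This produces a collection of boundedly many open subsets of $\domain$ (bounded by a function of $p$ alone) on each of which there exist smooth $V^*$-valued $\overline{u}^{i,i'}(t)$ with $\overline{u}^{i,i'}(t) \in U_i(t)$ and $\overline{u}^{i,1}(t),\ldots,\overline{u}^{i,p_i}(t)$ linearly independent modulo $U_{i+1}(t)$, whose total wedge equals $u(t)$, whose coefficients in the basis $u^1(t),\ldots,u^p(t)$ are bounded by a constant depending on $p$ only, and whose Nash complexity is at most $K^N$ when $u$ is.

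Second, I would apply Corollary \ref{containercorollary} analogously inside $V$, to the chain $V = V_0(t) \supset \cdots \supset V_m(t) \supset \{0\}$, using the (constant) basis $\omega^1,\ldots,\omega^q$. Since the subspaces $V_j(t)$ are smooth families (and by hypothesis Nash of controlled complexity when applicable), this furnishes a second collection of boundedly many open subsets on which the $\overline{v}^{j,j'}(t)$ exist with the properties claimed in items (1), (2), (3), (4) of the lemma restricted to the $V$-side. Intersecting the open subsets arising in the two applications yields a single collection of boundedly many subsets on which both factorizations are simultaneously defined; the total number remains bounded by a function of $p$ and $q$ only.

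Third, the pairing identity \eqref{annihilated} is then immediate from Proposition \ref{blockprop}. Indeed, $\overline{u}^{i,i'}(s)$ is a smooth $V^*$-valued function taking values in $U_i(s)$ and $\overline{v}^{j,j'}(t)$ is a smooth $V$-valued function taking values in $V_j(t)$, so Proposition \ref{blockprop} applied to this pair gives $(\partial^{\alpha} \overline{u}^{i,i'})(t) \cdot (\partial^{\beta} \overline{v}^{j,j'})(t) = 0$ whenever $|\alpha| + |\beta| < D_{ij}$, which is exactly \eqref{annihilated}. The wedge identities in (4) are the defining property of the outputs of Corollary \ref{containercorollary}, and together with the linear-independence-modulo-$U_{i+1}(t)$ (respectively $V_{j+1}(t)$) property guaranteed there, they give that $\{\overline{u}^{i,i'}\}$ and $\{\overline{v}^{j,j'}\}$ are adapted factorizations of $u(t)$ and of $V$ in the precise sense required.

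There is essentially no substantive obstacle here: the lemma is a bookkeeping statement that bundles the outputs of the previous two results. The only minor point requiring care is that the annihilation identity \eqref{annihilated} in the definition of block decomposition is phrased in terms of local parametrizations $u_i$ of $U_i(t)$ and $v_j$ of $V_j(t)$, whereas Proposition \ref{blockprop} upgrades this to the statement for arbitrary smooth sections $\overline{u}(t) \in U_i(t)$ and $\overline{v}(t) \in V_j(t)$; since $\overline{u}^{i,i'}$ and $\overline{v}^{j,j'}$ are exactly such sections, this upgrade is precisely what is needed.
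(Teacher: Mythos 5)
Your proposal is correct and is, in substance, exactly what the paper does: the lemma is presented there with no separate proof, introduced only by the remark that it records ``the combined contents of Corollary \ref{containercorollary} and Proposition \ref{blockprop},'' and you have identified precisely how those two ingredients assemble — Corollary \ref{containercorollary} applied once to the chain $U_0(t) \supset \cdots \supset U_{\mstar}(t)$ with the basis $u^1,\ldots,u^p$, once to $V_0(t) \supset \cdots \supset V_m(t)$ with the basis $\omega^1,\ldots,\omega^q$, intersection of the resulting covers, and then Proposition \ref{blockprop} for the vanishing of mixed derivatives. Your closing remark correctly isolates the only nontrivial step, namely that Proposition \ref{blockprop} upgrades the annihilation condition from the parametrizing forms $u_i$, $v_j$ in the definition of a block decomposition to arbitrary smooth sections $\overline{u}^{i,i'} \in U_i$ and $\overline{v}^{j,j'} \in V_j$.
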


\section{Proof of Theorem \ref{bigtheorem1}}
\label{proofsec}

\subsection{Geometric Differential Inequalities}
\label{gdineqsec}

The proof of Theorem \ref{bigtheorem1} is based on the geometric vector field framework which was first developed to study the Oberlin affine curvature condition \cite{gressman2019} and has subsequently proved useful in a number of other contexts. As several proofs of the main results have already appeared in the literature and as no fundamental modifications of the result are needed in the present case, the reader is referred to Theorem 3 of \cite{gressmancurvature} for details. The present paper will treat this construction as a black box.

Suppose that $\domain_1 \supset \domain_2 \supset \cdots$ are open sets in $\R^d$ and that for each integer $N \geq 1$,  $\{X^{(N)}_i\}_{i=1}^d$ is a family of smooth vector fields on $\domain_N$.  Any $\alpha := ((N_1,\ldots,N_\ell),(i_1,\ldots,i_\ell))$ such that $N_1 < N_2 < \cdots < N_\ell$ and $i_1,\ldots,i_\ell \in \{1,\ldots,d\}$, will be called a generalized multiindex and $X^{\alpha}$ will be defined to equal the differential operator
\begin{equation} X^\alpha := X^{{(N_\ell)}}_{i_\ell} \cdots X^{{(N_1)}}_{i_1} \label{valid} \end{equation}
which acts on smooth functions defined on $\domain_{N_\ell}$.
The the order of this operator is $\ell$ (which will also be denoted $|\alpha|$) and $N_{\ell}$ will be called the generation of the operator $X^\alpha$.  We also consider $\alpha := (\emptyset,\emptyset)$ to be a generalized multiindex of order $0$ and generation $0$ and define $X^{\alpha}$ to be the identity operator. 

\begin{theorem}[Theorem 3 of \cite{gressmancurvature}]
Suppose $\domain_0 \subset \R^d$ is open and $f : \domain_0 \rightarrow \R^m$ is real analytic and has a rank $d$ Jacobian matrix $D_x f$ at every $x \in \domain_0$. Suppose also that each component function $f^j$ is a Nash function on $\domain_0$ of complexity at most $K$ and that $W$ is some nonnegative locally integrable function on $\domain_0$.  \label{mainineq} Finally, suppose that $E_0 \subset \domain_0$ is a compact set such that $\sup_{x \in E_0} |f^j(x)| \leq 1$ for each $j$.
 Then for every integer $N \geq 1$, there exists an open set $\domain_N \subset \domain_{N-1}$, a compact set $E_N \subset E_{N-1} \cap \domain_N$, smooth vector fields $\{X^{(N)}_{i}\}_{i=1,\ldots,d}$ defined on $\domain_N$, and positive constants $c_{N,d}$ depending only on $d,N$ such that the following are true:
\begin{enumerate}
\item For each $N \geq 1$, $W(E_N) \geq c_{N,d} m^{-Nd} W(E)$, where $W$ applied to a set denotes the measure of the set with respect to $W \, dx$ (i.e., Lebesgue measure with density $W$).
\item For each $N', N$ with $1 \leq N' < N$ and each $x \in \domain_N$,
\begin{equation} \left. X^{(N)}_i \right|_{x} = \sum_{i'=1}^d c_{i}^{i'} (x) \left. X^{(N')}_{i'} \right|_x \label{basissize0} \end{equation}
for some smooth coefficients $c_{i}^{i'}$ of magnitude at most $2$.
\item For each $N \geq 1$ and each $x \in E_N$, 
\begin{equation} W(x) |\det (X^{(N)}_1,\ldots,X^{(N)}_d)| \Big|_x \geq c_{N,d} m^{-Nd} K^{-(2d+2)^N} W(E_0). \label{nondegenvecs} \end{equation}
 Here $\det (X^{(N)}_1,\ldots,X^{(N)}_d)$ indicates the determinant of the $d \times d$ matrix whose columns are given by the representations of $X^{(N)}_1,\ldots,X^{(N)}_d$ in the standard coordinates of $\R^d$.
\item For each $j \in \{1,\ldots,m\}$ and each generalized multiindex $\alpha$ of generation at most $N$,
\begin{equation} \sup_{x \in \domain_N} |X^\alpha f^j(x)| \leq 1. \label{diffineq} \end{equation}
\end{enumerate}
\end{theorem}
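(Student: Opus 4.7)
Since this result is invoked as a black box from \cite{gressmancurvature}, my proposal is to reconstruct the plausible scheme. The plan is to prove the theorem by induction on $N$, constructing each generation of vector fields as a pointwise linear modification of the previous generation and shrinking the compact set just enough at each step to control whatever new iterated derivatives appear. For the base case one sets $X^{(0)}_i := \partial/\partial x_i$ and $E_0$ as given; conditions (1)--(3) are trivial at $N=0$ and condition (4) is exactly the assumed sup bound $|f^j| \leq 1$.

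For the inductive step, the goal is to produce $X^{(N)}_i$ of the form $\sum_{i'} c_i^{i'}(x) X^{(N-1)}_{i'}$ with $|c_i^{i'}| \leq 2$, i.e., we conjugate by a field of $d\times d$ matrices $A(x)$ whose inverse has operator norm at most $2$; this forces condition (2) to hold automatically. The new derivatives that must be controlled are those generalized multiindex derivatives $X^\alpha f^j$ whose final operator has generation exactly $N$. Expanding using the product rule and the inductive hypothesis, these reduce to quantities of the form $X^{(N)}_{i} g$ where $g$ ranges over a finite (but combinatorially large) collection of Nash functions whose complexities are bounded by a power of $K$. One then chooses $A(x)$ so as to cap all these first-order new derivatives at magnitude $1$; a natural way to do this is to take $A(x)$ as essentially a diagonal matrix in a basis that simultaneously diagonalizes the finite collection of Nash functions $X^{(N-1)}_{i'}g$, with diagonal entries chosen so that $A(x) X^{(N-1)}_{i'} g = X^{(N)}_i g$ has magnitude $\leq 1$ at $x$.

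Maintaining the measure bound (1) and the determinant bound (3) is where the Nash sublevel-set machinery enters. On the subset where the rescaling $A(x)$ would need to be too aggressive (i.e., would shrink the determinant below the target $c_{N,d} m^{-Nd} K^{-(2d+2)^N} W(E_0)$), we simply excise those points from $E_{N-1}$ to form $E_N$. The crucial input is the Nash sublevel-set estimate: the measure of the set on which a Nash function of complexity $K'$ is smaller than a given threshold can be bounded polynomially in $K'$ and the threshold. Since at generation $N$ the functions one is tracking are built from at most $O(N)$ rounds of differentiation, multiplication, and division of Nash functions starting at complexity $K$, their complexities fit inside $K^{(2d+2)^N}$ (the doubling comes from differentiation plus the fundamental operations listed in Section \ref{nash}), and the resulting excision loses only the claimed factor of $c_{N,d} m^{-Nd}$ in measure and leaves the claimed lower bound on $W(x)|\det(X^{(N)})|$.

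The main obstacle, as I see it, is the bookkeeping required to carry out the simultaneous cap on all the new derivatives without destroying the determinant bound or the measure bound; a naive greedy choice of $A(x)$ might cap one derivative at the cost of blowing another up. The cleanest resolution is an exhaustion scheme: enumerate the finitely many derivatives-to-be-controlled in order of increasing generalized-order, and apply the Nash sublevel excision to each in turn, using that each individual excision loses only polynomially in $K$ of the previous compact set and only polynomially shrinks the determinant from below. Because the number of derivatives to control at generation $N$ is bounded by a function of $d$ and $N$, and the iterated complexity is controlled by $(2d+2)^N$ (one factor of $2$ per differentiation, one factor of $d+1$ per algebraic combination), the final bound $K^{-(2d+2)^N}$ emerges naturally. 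The payoff is condition (4), which holds on $E_N$ by direct construction because every new derivative has been capped at $1$ and every older derivative was already $\leq 1$ by the inductive hypothesis.
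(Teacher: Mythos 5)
First, a framing remark: this theorem is not proved in the present paper at all --- it is invoked verbatim as Theorem~3 of \cite{gressmancurvature} and explicitly ``treated as a black box'' (see the opening of Section~\ref{gdineqsec}), so there is no in-paper proof to compare your reconstruction against.

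As a free-standing sketch, your plan reproduces the broad shape of the argument in that line of work (induction on generation, rescaling each new family of vector fields relative to the previous generation, excision of a bad set via a Nash sublevel-set estimate), but two steps as written would fail. First, the phrase ``take $A(x)$ as essentially a diagonal matrix in a basis that simultaneously diagonalizes the finite collection of Nash functions $X^{(N-1)}_{i'}g$'' is not a well-posed operation: the quantities $X^{(N-1)}_{i'}g(x)$ are scalars, not a commuting family of linear operators, so there is no basis in which they are ``simultaneously diagonalized,'' and the displayed identity $A(x)X^{(N-1)}_{i'}g = X^{(N)}_i g$ conflates a matrix acting on a vector of indices with scalar multiplication. (Relatedly, the constraint you impose is backwards: condition~(2) bounds the entries of the coefficient matrix itself by $2$, not the operator norm of its inverse.) Second, and more fundamentally, the excision step invokes a Nash sublevel-set estimate, which controls the \emph{Lebesgue} measure of a sublevel set; but conclusions~(1) and~(3) are statements about the arbitrary weighted measure $W\,dx$. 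Since $W$ is only assumed nonnegative and locally integrable, it may concentrate almost all of its mass on a Lebesgue-null set --- in particular on precisely the set you propose to discard --- so ``excise a Lebesgue-small set'' does not, on its own, yield $W(E_N)\gtrsim W(E_0)$, nor does it justify the coupled lower bound $W(x)\,|\det(X^{(N)}_1,\ldots,X^{(N)}_d)|\gtrsim W(E_0)$ on the complement. Bridging the Lebesgue sublevel estimate to the arbitrary weight $W$ requires an extra device (something along the lines of a stopping-time or dyadic slicing in the size of $W$, or a reformulation that makes $W\cdot|\det(\cdot)|$ rather than $|\det(\cdot)|$ the object whose sublevel sets are controlled) which your proposal does not supply; this is precisely why condition~(3) is stated for the product $W(x)\,|\det(\cdot)|$ and not $|\det(\cdot)|$ alone.
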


As was done in \cite{gressman2019} and \cite{gressman2022}, the theorem above will be adapted slightly to yield bounds for $X^{\alpha} f$ in terms of the supremum of $f$ on $E$ rather than explicitly requiring $f$ to be normalized. It will also be necessary to establish a simple multiparameter variation of \eqref{diffineq}. These objectives are accomplished by the following lemma.
\begin{lemma}
Suppose that $\mathcal F$ is an $m$-dimensional vector space of Nash functions on $\domain \subset \R^d$ such that all functions in $\mathcal F$ have complexity at most $K$ (and assume without loss of generality that $K \geq 2$). Let $W$ be a nonnegative measurable function on $\domain$ and suppose $E \subset \domain$ is compact and has positive Lebesgue measure. Then for every integer $N \geq 1$, there exists an open set $\domain_N \subset \domain_{N-1}$, a compact set $E_N \subset E_{N-1} \cap \domain_N$, smooth vector fields $\{X^{(N)}_{i}\}_{i=1,\ldots,d}$ defined on $\domain_N$, and positive constants $c_{N,d,m}$ depending only on $d,N,m$ such that the following are true: \label{multilinear}
\begin{enumerate}
\item For each $N \geq 1$, $W(E_N) \geq c_{N,d,m} W(E)$.
\item For each $N', N$ with $1 \leq N' < N$ and each $x \in \domain_N$,
\begin{equation} \left. X^{(N)}_i \right|_{x} = \sum_{i'=1}^d c_{i}^{i'} (x) \left. X^{(N')}_{i'} \right|_x \label{basissize099} \end{equation}
for some smooth coefficients $c_{i}^{i'}$ of magnitude at most $2$.
\item For each $N \geq 1$ and each $x \in E_N$, 
\begin{equation} W(x) |\det (X^{(N)}_1,\ldots,X^{(N)}_d)| \Big|_x \geq c_{N,d,m} K^{-(2d+2)^N} W(E). \label{nondegenvecs99} \end{equation}
\item For any generalized multiindices $\alpha_1,\ldots,\alpha_k$ of generation at most $N$, if $g : \domain^k \rightarrow \R$ has the property that $x_i \mapsto g(x_1,\ldots,x_k)$ belongs to $\mathcal F$ when each $x_j$ is fixed for $j \neq i$, then
\begin{equation} \begin{split}
\sup_{x_1 \in \domain_N} \cdots \sup_{x_k \in \domain_N} & |X^{\alpha_1}_1 \cdots X_{k}^{\alpha_k} g(x_1,\ldots,x_k)| \\ & \leq m^{k} \sup_{x_1 \in E} \cdots \sup_{x_k \in E} |g(x_1,\ldots,x_k)|,
\end{split} \label{diffineq99} \end{equation}
where $X^{\alpha_i}_i$ denotes the differential operator $X^{\alpha_i}$ acting on the variables $x_i$.
\end{enumerate}
\end{lemma}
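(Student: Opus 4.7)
The plan is to reduce Lemma \ref{multilinear} to Theorem \ref{mainineq} via an Auerbach-basis construction, and then to extend the single-variable derivative bound to the multi-variable setting by separating variables.

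First I would augment $\mathcal{F}$ with the coordinate functions $x_1,\ldots,x_d$ to obtain a space $\tilde{\mathcal{F}}$ of dimension $\tilde m \le m+d$; by the closure properties of Nash functions recorded in Section \ref{nash}, its elements remain Nash of complexity at most $K^{N_0}$ for some $N_0$ depending only on $d$. Because elements of $\tilde{\mathcal{F}}$ are real analytic and $E$ has positive Lebesgue measure (restricting to a connected component of $\domain$ meeting $E$ in positive measure if necessary), $\|h\|_E := \sup_E |h|$ defines a bona fide norm on $\tilde{\mathcal{F}}$; Auerbach's theorem then furnishes a basis $\tilde f^1,\ldots,\tilde f^{\tilde m}$ with $\|\tilde f^j\|_E = 1$ and coefficient functionals $\tilde c^*_j$ of dual norm one. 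An analogous Auerbach basis $f^1,\ldots,f^m$ of $\mathcal{F}$ with coefficient functionals $c^*_j$ would also be fixed.

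Next I would apply Theorem \ref{mainineq} to the map $\tilde F := (\tilde f^1,\ldots,\tilde f^{\tilde m})$. Because $\tilde{\mathcal{F}}$ contains the coordinate functions, $\tilde F$ has Jacobian rank $d$ at every point; because $\sup_E |\tilde f^j| = 1$, the normalization hypothesis is in force. This produces $\domain_N$, $E_N$, and vector fields $X^{(N)}_i$, yielding items 1--3 of the lemma (with $\tilde m^{-Nd}$ absorbed into a new $c_{N,d,m}$) and the pointwise bound $|X^\alpha \tilde f^j(x)| \le 1$ on $\domain_N$ for every generalized multiindex of generation at most $N$.

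For item 4, I would prove by induction on $k$ that any function $g$ satisfying the hypothesis admits a unique expansion
\[ g(x_1,\ldots,x_k) = \sum_{j_1,\ldots,j_k=1}^m c_{j_1\cdots j_k}\, f^{j_1}(x_1)\cdots f^{j_k}(x_k). \]
The inductive step rests on the observation that each $c^*_j$ can be realized as a finite linear combination of point evaluations in $\domain$ (choose $m$ points in $\domain$ at which evaluation is bijective on $\mathcal{F}$), so that applying $c^*_j$ in one variable preserves the hypothesis that the remaining univariate restrictions lie in $\mathcal{F}$. Iterating the Auerbach dual bound $|c^*_j(h)| \le \|h\|_E$ one variable at a time then yields $|c_{j_1\cdots j_k}| \le \sup_{E^k}|g|$. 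Each $f^j$ also expands in the Auerbach basis of $\tilde{\mathcal{F}}$ with coefficients of modulus at most one, so $|X^\alpha f^j(x)| \le \tilde m$ on $\domain_N$. Combining these facts gives $|X^{\alpha_1}_1\cdots X^{\alpha_k}_k g| \le \tilde m^k m^k \sup_{E^k}|g|$, from which item 4 follows after absorbing the factor $\tilde m^k \le (m+d)^k$ into constants depending only on dimension. I expect the main obstacle to be the separation-of-variables induction, where one must verify that the space of functions on $\domain^k$ whose every univariate restriction lies in $\mathcal{F}$ is precisely the algebraic tensor power $\mathcal{F}^{\otimes k}$; this is not obvious and hinges on the realization of $c^*_j$ as a combination of point evaluations, which is what lets one interchange restricting to a slice with extracting a coefficient.
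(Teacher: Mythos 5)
Your approach is in the right spirit, and most of it would work, but there are two places where it departs from the paper in ways that matter for the statement as written. The paper's construction of a favorable basis of $\mathcal F$ is exactly an Auerbach basis (it maximizes $|\det(g^1,\ldots,g^m)|$ over the unit ball of $\|\cdot\|_E$, which is the standard Auerbach argument), so your basis construction matches. But the paper then applies Theorem \ref{mainineq} to the map $(f^1,\ldots,f^m,\epsilon t^1,\ldots,\epsilon t^d)$, whose components include the $f^j$ themselves, so that \eqref{diffineq} gives $|X^\alpha f^j|\le 1$ on $\domain_N$ \emph{directly}; you instead apply it to an Auerbach basis of the augmented space $\tilde{\mathcal F}$ and then re-expand each $f^j$ in that basis, which costs an extra factor of $\tilde m$ for every $X^\alpha f^j$. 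Your final bound is therefore $(m\tilde m)^k\sup_{E^k}|g|$, not the $m^k\sup_{E^k}|g|$ asserted in conclusion 4, and---unlike conclusions 1 and 3---conclusion 4 has no absorbing constant $c_{N,d,m}$; the $m^k$ is exact. This is not a deep flaw, and it is repaired simply by feeding $(f^1,\ldots,f^m,\epsilon t^1,\ldots,\epsilon t^d)$ to Theorem \ref{mainineq}, but as written your argument does not prove item 4 of the lemma. Relatedly, your estimate that elements of $\tilde{\mathcal F}$ have complexity $\le K^{N_0}$ for some $N_0>1$ would, if correct, also spoil the exponent $(2d+2)^N$ in item 3; fortunately a linear function has complexity $1$ and the sum rule then gives complexity $\le K\cdot 1=K$ for $f+L$ with $f\in\mathcal F$, so no loss actually occurs.

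The second difference is more of a stylistic one: you prove a full tensor-product representation $g=\sum c_{j_1\cdots j_k} f^{j_1}\otimes\cdots\otimes f^{j_k}$, whereas the paper avoids this entirely with a one-variable-at-a-time iteration. The paper first shows $\sup_{\domain_N}|X^\alpha h|\le m\sup_E|h|$ for $h\in\mathcal F$, then, for $k$ variables, applies the $(k-1)$-variable estimate to $h(x_1,\ldots,x_{k-1}):=X^{\alpha_k}_k g(x_1,\ldots,x_k)$ at each fixed $x_k$, and finally applies the one-variable estimate in $x_k$. The key lemma you flag---that differentiating in $x_k$ preserves the hypothesis that the univariate slices lie in $\mathcal F$---is indeed the same realization-as-point-evaluations fact you invoke for your tensor decomposition, so you have correctly identified the crux; but the tensor decomposition itself is unnecessary machinery. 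Your decomposition does hold (the induction through the coefficient functionals $c^*_j$ realized as combinations of point evaluations is a valid argument), it just buys you nothing over the direct iterative bound and introduces extra constant-tracking.
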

\begin{proof}
The first step is to identify a suitable finite list of functions $\{f^j\}_{j}$ to which the existing theorem may be applied.
Without loss of generality, it may be  assumed that every connected component of $\domain$ intersects $E$ in a set of positive measure (by simply discarding any connected component of $\domain$ which does not and replacing $E$ by a compact subset $E'$ of equal measure contained only in those connected components which remain). Applying the lemma to this smaller $\domain$ and smaller compact set $E'$ inside $E$ yields $E_N$ and operators $X^\alpha$ which satisfy all the required relationships with $E$ itself rather than just the refined subset $E'$. Note that in discarding connected components of $\domain$, the dimension of $\mathcal F$ may decrease (if there happen to be functions $f$ which are not trivial but are identically zero on $E'$), but this poses no problems.

The quantity $||\cdot||_E$ defined by
\[ ||f||_E := \sup_{x \in E} |f(x)| \]
is a norm on $\mathcal F$; the only nonstandard step in establishing this claim is verifying that $||f||_E = 0$ implies $f = 0$. This must be the case because $||f||_E = 0$ means that $f$ vanishes on a set of positive measure within every connected component of $\domain$; since $f$ is real analytic, it must therefore vanish on the entirety of each connected component. 
Because $||\cdot||_E$ is a norm on the finite-dimensional vector space $\mathcal F$, it must be the case that the closed unit ball
\[ B := \set{f \in {\mathcal F}}{ ||f||_E \leq 1 } \]
is compact and contains a neighborhood of the zero function. If the dimension of $\mathcal F$ is $m$, let $\det$ be any nontrivial alternating $m$-linear functional on $\mathcal F$ (the set of choices is unique modulo multiplication by a nonzero real number). Compactness of $B$ and continuity of $\det$ on $B^m$ guarantees the existence of $f^1,\ldots,f^m \in W$ such that
\[ \sup_{g^1,\ldots,g^m \in B} |\det (g^1,\ldots,g^m)| = |\det (f^1,\ldots,f^m)| \]
and because $\det$ is nontrivial and $B$ contains an open ball around the origin, $|\det(f^1,\ldots,f^m)| > 0$. By Cramer's Rule, any $g \in B$ satisfies the identity
\[ g = \sum_{i=1}^m \frac{(-1)^{i-1} \det (g, f^1,\ldots,\widehat{f^i},\ldots,f^m)}{\det (f^1,\ldots,f^m)} f_i, \]
and because $g,f^1,\ldots,f^m \in B$, it follows by definition of $f^1,\ldots,f^m$ that the coefficient of $f^i$ has magnitude at most $1$ for each $i$. By rescaling, it follows that every $g \in \mathcal F$ satisfies
\begin{equation}  g = ||g||_E \sum_{i=1}^m c_i(g) f^i  \label{smallcoeffs} \end{equation}
for constants $c_i(g)$ which have magnitude at most $1$. Note also that as $f^i \in B$, it follows that $\sup_{x \in E} |f^i(x)| \leq 1$ for each $i$. Furthermore, each $f^i$ is, by assumption, a Nash function of complexity at most $K$.

Now let $f : \domain \rightarrow \R^{m+d}$ be the map with components $(f^1,\ldots,f^m,\epsilon t^1,\ldots, \epsilon t^d)$ for some fixed positive $\epsilon$ chosen sufficiently small that $|\epsilon t^i| \leq 1$ for all $t = (t^1,\ldots,t^d)$ belonging to $E$. The sets $\domain_1,\ldots,\domain_N$ and $E_1,\ldots,E_N$ and vector fields $\{X_i^{(j)}\}_{i=1}^d, j=1,\ldots,N$ are now taken to be exactly equal to the homonymous objects provided by Theorem \ref{mainineq}. The inequality \eqref{diffineq99} is the only conclusion which does not follow directly from that previous theorem. It is known, however, that for any $g \in \mathcal F$, the identity \eqref{smallcoeffs} implies that
\begin{align*}
\sup_{x \in \domain_N} |X^{\alpha} g(x)| & \leq ||g||_E \sup_{x \in \domain_N} \sum_{i=1}^m |c_i(g)| |X^\alpha f^i(x)| \\
& \leq ||g||_E \sum_{i=1}^m |c_i(g)| \sup_{x \in \domain_N} |X^\alpha f^i(x)| \leq m \sup_{x \in E} |g(x)|
\end{align*}
by virtue of \eqref{diffineq}. By induction on $k$, then,
\begin{align*}
\sup_{x_1 \in \domain_N} \cdots \sup_{x_{k-1} \in \domain_N} &  | X^{\alpha_1}_1 \cdots X^{\alpha_k}_k g(x_1,\ldots,x_k)| \\
& \leq m^{k-1} \sup_{x_1 \in E} \cdots \sup_{x_{k-1} \in E} | X^{\alpha_k}_k g(x_1,\ldots,x_k)|
\end{align*}
for each $x_k \in \domain$ because the maps $x_i \mapsto X^{\alpha_k}_k g(x_1,\ldots,x_k)$ belong to $\mathcal F$ when $i \neq k$ and $x_j$ is fixed for $j \neq i$ (simply because the maps  $x_i \mapsto g(x_1,\ldots,x_k)$ belong to $\mathcal F$, so derivatives with respect to $x_k$ must also). But furthermore
\[ |X_k^{\alpha_k} g(x_1,\ldots,x_k)| \leq m \sup_{x_k \in E} |g(x_1,\ldots,x_k)| \]
for each $x_1,\ldots,x_{k-1} \in E$ and $x_k \in \domain_N$, so 
\begin{align*}
\sup_{x_k \in \domain_N} \sup_{x_1 \in \domain_N} \cdots \sup_{x_{k-1} \in \domain_N} &  | X^{\alpha_1}_1 \cdots X^{\alpha_k}_k g(x_1,\ldots,x_k)| \\
& \leq m^{k-1} \sup_{x_1 \in E} \cdots \sup_{x_{k-1} \in E} \sup_{x_k \in \domain_N} | X^{\alpha_k}_k g(x_1,\ldots,x_k)| \\
& \leq m^{k} \sup_{x_1 \in E} \cdots \sup_{x_{k} \in E} | g(x_1,\ldots,x_k)|
\end{align*}
as asserted.
\end{proof}

\subsection{Proof of Theorem \ref{bigtheorem1}}
\label{actualproof}

\begin{proof}[Proof of Theorem \ref{bigtheorem1}]
Fix any choice of $\omega^1,\ldots,\omega^q$ with $|\det (\omega^1,\ldots,\omega^q)| = 1$ and let $W(t) := w(t) ||u(t)||_{\omega}^{-1/(\sigma p)}$. Note that the determinant condition on the $\omega^j$ implies that $\omega := \omega^1 \wedge \cdots \wedge \omega^q$ is, up to a possible choice of sign, independent of the individual $\omega^j$ which are chosen. For any compact set $E \subset \domain$,
\begin{equation} \sup_{t \in E} \frac{1}{||u(t)||_{\omega}} \left| \det \begin{bmatrix} u^1(t) \cdot \omega^{j_1} & \cdots & u^1(t) \cdot \omega^{j_p} \\ \vdots & \ddots & \vdots \\ u^p(t) \cdot \omega^{j_1} & \cdots & u^p(t) \cdot \omega^{j_p} \end{bmatrix} \right| \leq 1 \label{trivialbound} \end{equation}
for all choices of $j_1,\ldots,j_p \in \{1,\ldots,q\}$ (and in fact, the inequality simply holds for all $t \in \domain$ by definition of $||u(t)||_\omega$).

Let $U_0(t)$ be the smooth family of $p$-dimensional subspaces of $V^*$ which is parametrized by $(\domain,u)$ and suppose that $(U_0,V)$ admits a block decomposition.
By Lemma \ref{adaptedfactorization} and Proposition \ref{cutprop}, there must be some compact subset $E' \subset E$ contained in some open set $\domain'$ such that $W(E') \geq c_{p,q} W(E)$ such that
\[  \sup_{t \in E} \sup_{s_1,\ldots,s_p \in \domain'} \frac{1}{||u(t)||_{\omega}} \left| \det \begin{bmatrix} u^1(t) \cdot \overline{v}^{j_1,j'_1}(s_1) & \cdots & u^1(t) \cdot \overline{v}^{j_p,j'_p}(s_p) \\ \vdots & \ddots & \vdots \\ u^p(t) \cdot \overline{v}^{j_1,j'_1}(s_1) & \cdots & u^p(t) \cdot \overline{v}^{j_p,j'_p}(s_p) \end{bmatrix} \right| \leq c_{p,q} \]
where the $\overline{v}^{j,j'}$ form an adapted factorization of $\omega^1 \wedge \cdots \wedge \omega^q$. This follows from \eqref{trivialbound} by virtue of the simple fact that at each point $s \in \Omega'$, each $\overline{v}^{j,j'}$ is expressible as a linear combination of the $\omega^j$ with coefficients whose magnitude is bounded by some function of $p$ and $q$.

As it has been assumed that $u$ is Nash of complexity at most $K$,  Lemma \ref{adaptedfactorization} implies that the $\overline{v}^{j,j'}$ will be Nash as well with complexity at most $K^C$ for some $C$ depending only on $p$ and $q$. The component functions of the $\overline{v}^{j,j'}$ with respect to any standard basis of $V$ are then Nash functions which form a finite-dimensional vector space of functions $\mathcal F$ on $\domain$. By Lemma \ref{multilinear}, then, if $N$ is taken to be the maximum of $D_{ij}$ over all pairs $(i,j)$, then one has some compact set $E_N \subset E$ with $W(E_N) \geq c_{p,q,d,N} W(E)$ and families of smooth vector fields $X_i^{(N)}$ satisfying \eqref{basissize099} and \eqref{nondegenvecs99} such that the quantity
\[  \sup_{t,s_1,\ldots,s_p \in E_N} \frac{1}{||u(t)||_{\omega}} \left| \det \begin{bmatrix} u^1(t) \cdot X^{\alpha_1} \overline{v}^{j_1,j_1'}(s_1) & \cdots & u^1(t) \cdot X^{\alpha_p} \overline{v}^{j_p,j_p'}(s_p) \\ \vdots & \ddots & \vdots \\ u^p(t) \cdot X^{\alpha_1} \overline{v}^{j_1,j_1'}(s_1) & \cdots & u^p(t) \cdot X^{\alpha_p} \overline{v}^{j_p,j_p'}(s_p) \end{bmatrix} \right|  \]
is uniformly bounded above by some constant depending only on $p,q,d,$ and $N$
for all generalized multiindices $\alpha_j$ of order at most $N$ and generation at most $N$ (which is established by fixing any $t \in E_N$ and applying Lemma \ref{multilinear} to the variables $s_1,\ldots,s_p$).

Now by squaring and summing over all $(\alpha_k,j_k,j'_k)$ and restricting to the diagonal $s_1=\cdots=s_p=t$, the identity \eqref{ftgit} guarantees that 
\[  \sup_{t \in E_N} \frac{1}{||u(t)||_\omega} \inf_{\tilde u^1 (t) \wedge \cdots \wedge \tilde u^p(t) =u(t)} \left[ \sum_{i,\alpha,j,j'} |\tilde u^i(t) \cdot X^{\alpha} \overline{v}^{j,j'}(t)|^{2} \right]^\frac{p}{2} \leq C_{p,q,d,N} \]
for some constant $C$ depending only on $p,q,d$ and $N$. However, near any $t \in \domain$, there is always an adapted factorization $\overline{u}^{i,i'}(t)$ of $u$ which is expressible in terms of the $\tilde u^i$ with uniformly bounded coefficients. Choosing $\tilde u^1,\ldots,\tilde u^p$ which achieve the infimum at $t$ to within an error of at most $||u(t)||_\omega$ and then replacing them by the $\overline{u}^{i,i'}$ gives the following: for any $t \in E_N$, there are smooth adapted factorizations $\{\overline{u}^{i,i'}(t)\}_{i,i}$ of $u(t)$ and $\{\overline{v}^{j,j'}\}_{j,j'}$ of $V$ defined in a neighborhood of $t$ such that
\[ \frac{1}{||u(t)||_\omega}  \left[ \sum_{i,i',\alpha,j,j'} |\overline{u}^{i,i'}(t) \cdot X^{\alpha} \overline{v}^{j,j'}(t)|^{2} \right]^\frac{p}{2} \leq C_{p,q,d,N}'. \] 

Fixing any point $t \in E_N$ as above, let $M$ be any $d \times d$ matrix such that $(M \partial)_i = X^{(N)}_i$ for each $i=1,\ldots,d$. By \eqref{nondegenvecs99}, it follows that $W(t) |\det M| \geq c_{N,d,p,q} K^{-C} W(E)$ for some $C$ depending only on dimensions and $N$. Moreover, \eqref{basissize099} guarantees that the differential operators $(M \partial)^{\alpha}$ are always expressible as linear combinations of $X^{\beta}$ with uniformly bounded coefficients plus terms which are strictly lower order than $|\alpha|$. If $|\alpha| = D_{ij}$ for some given $i$ and $j$, those lower-order terms are annihilated in the sense that $\overline{u}^{i,i'}(t) \cdot [ (M \partial)^{\alpha} - X^\alpha] \overline{v}^{j,j'}(t) = 0$ (thanks to \eqref{annihilated}), and consequently
\[ |\overline{u}^{i,i'}(t) \cdot (M \partial)^\alpha \overline{v}^{j,j'}(t)|^{2} \]
is uniformly bounded above by the sum of over $\beta$ of $|\overline{u}^{i,i'}(t) \cdot X^{\beta} \overline{v}^{j,j'}(t)|^{2}$. It follows that
\begin{equation} \frac{1}{||u(t)||_\omega}  \left[ \sum_{i,i',j,j'} \sum_{|\alpha| = D_{ij}} \frac{|\overline{u}^{i,i'}(t) \cdot (M \partial)^{\alpha} \overline{v}^{j,j'}(t)|^{2}}{\alpha!} \right]^\frac{p}{2} \leq C_{p,q,d,N}''. \label{multiverse} \end{equation}
By the hypothesis \eqref{nondegenhyp}, since $W(t) |\det M| \geq c_{N,d,p,q} K^{-C} W(E)$, it must be the case that
\begin{align*} \left( c_{N,d,p,q} K^{-C} W(E) \right)^{\sigma p} & \leq  (W(t) |\det M|)^{\sigma p}   = \frac{1}{||u(t)||_\omega} \left[ |\det M|^\sigma (w(t))^\sigma \right]^p \\ & \leq \frac{1}{||u(t)||_\omega}  \left[ \sum_{i,i',j,j'} \sum_{|\alpha| = D_{ij}} \frac{|\overline{u}^{i,i'}(t) \cdot (M \partial)^{\alpha} \overline{v}^{j,j'}(t)|^{2}}{\alpha!} \right]^\frac{p}{2} \\ & \leq C''_{p,d,q,N}. \end{align*}
This implies that
\[ \int_{E} \frac{w(t)dt}{||u(t)||_\omega^{1/(\sigma p)}} \leq C'''_{p,d,q,N} K^C \]
for some constants $C'''_{p,d,q,N}$ and $C$ which depend only on $p,d,q$, and $N$. Because $E$ is an arbitrary compact set, we can exhaust $\domain$ by a sequence of such compact sets and the theorem will then hold as a consequence of the Lebesgue Dominated Convergence Theorem.
\end{proof}

\section{Useful Computational Tools}
\label{usefulsec}

In this section, we continue to assume that some smooth, nonvanishing decomposable $p$-form $u(t) \in \Lambda^p(V^*)$ has been defined for all $t \in \domain$ and that $V$ is equipped with a determinant functional. We assume that $U_0(t)$ are the $p$-dimensional subspaces parametrized by $(\domain,u)$ and that we have been given a block factorization of $(U_0,V)$ with spaces $U_0(t) \supset \cdots \supset U_{\mstar}(t) \supset U_{\mstar + 1}(t) = \{0\}$ and $V = V_0(t) \supset \cdots \supset V_m(t) \supset V_{m+1}(t) = \{0\}$ with pairings of elements in $U_i(t)$ and $V_j(t)$ having been assigned formal degree $D_{ij}$ for each pair $(i,j) \in \{0,\ldots,\mstar\} \times \{0,\ldots,m\}$. The goal now is to bridge the remaining gap between Theorem \ref{bigtheorem1} and the Geometric Invariant Theory tools from Section \ref{gitsec}.

\begin{definition}
A subset $\mathcal{T}$ of the index set $\{0,\ldots,\mstar\} \times \{0,\ldots,m\}$ is called a tile when it is a product of intervals. Given a tile $\mathcal T := [i_L,i_R] \times [j_L,j_R]$, let $\mathcal T^+$ be the tile obtained by expanding $\mathcal T$ by one index on the upper ends when possible, i.e.,
\[ \mathcal{T}^+ := [i_L,\min\{i_R+1,\mstar\}] \times [j_L,\min\{j_R+1,m\}]. \]
Index pairs in $\mathcal{T}^+ \setminus \mathcal{T}$ will be said to come immediately after $\mathcal T$.
\end{definition}
\begin{definition}
A tile $\mathcal T$ will be called a useful tile when the formal degrees of pairs $(i,j) \in \mathcal{T}^+ \setminus \mathcal{T}$ strictly dominate those of the adjacent elements in $\mathcal T$. More precisely, for every useful tile $\mathcal{T}$ and $(i,j) \in \mathcal{T}^+ \setminus \mathcal{T}$, if $(i-1,j) \in \mathcal{T}$, then $D_{ij} > D_{(i-1)j}$ and if $(i,j-1) \in \mathcal{T}$, then $D_{ij} > D_{i (j-1)}$.
\end{definition}
Note that the usefulness of a tile is a feature of both the block decomposition and the assigned formal degrees $D_{ij}$. Oftentimes it is possible to force a tile to be useful by decreasing the chosen values of $D_{ij}$ for certain index pairs. Decreasing the formal degrees $D_{ij}$ is not an entirely costless act, though, because one gets less information overall about the block decomposition.

Now suppose that, given the block decomposition, $u^{i,i'}(t)$ and $v^{j,j'}(t)$ are any adapted factorizations of $u(t)$ and $V$. For each useful tile $\mathcal T := I \times J \subset \{0,\ldots,\mstar\} \times \{0,\ldots,m\}$, consider the bilinear map
\begin{equation} P_{\mathcal T,t}(x,y,z) := \sum_{i \in I} \sum_{i' = 1}^{p_i} \sum_{j \in J} \sum_{j'=1}^{q_j} x_{i,i'} y_{j,j'} \sum_{|\alpha| = D_{ij}} \frac{z^\alpha}{\alpha!} u^{i,i'}(t) \cdot \partial^{\alpha} v^{j,j'}(t) \label{gitdef} \end{equation}
which sends pairs $x \in \R^{p_I}$ and $y \in \R^{q_J}$ (with $p_I := \sum_{i \in I} p_i$ and similarly for $q_J$) into the space of real polynomials of degree at most $\max_{(i,i) \in \mathcal{T}} D_{ij}$ in the variable $z \in \R^d$.

\begin{proposition}
Suppose that $u^{i,i'}(t)$ and $v^{j,j'}(t)$ are as given above. For any adapted factorizations $\overline{u}^{i,i'}(t)$ and $\overline{v}^{j,j'}(t)$ of $u(t)$ and $V$, there are positive real functions $\delta_0(t),\ldots,\delta_{\mstar}(t)$ and $\eta_0(t),\ldots,\eta_m(t)$ with $1 = \delta_0(t) \cdots \delta_{\mstar}(t)= \eta_0(t) \cdots \eta_m(t)$ such that every useful tile $\mathcal{T} := I \times J \subset \{0,\ldots,\mstar\} \times \{0,\ldots,m\}$ has
\begin{equation} \begin{split}  & \left( \sum_{i,i',j,j'} \sum_{|\alpha| = D_{ij}} \frac{|\overline{u}^{i,i'}(t) \cdot (M \partial)^{\alpha} \overline{v}^{j,j'}(t)|^{2}}{\alpha!} \right)^{\frac{1}{2}}  \\ & \qquad \qquad \geq |\det M|^{\sigma} \left( \prod_{i \in I} \delta_i(t) \right)^{1/p_I} \left( \prod_{j \in J} \eta_j (t) \right)^{1/q_J} |||P_{\mathcal T,t}|||_\sigma.
\end{split} \label{usefullower} 
\end{equation}
\end{proposition}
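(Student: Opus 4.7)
The plan is to reduce the inequality at a fixed point $t$ to a direct comparison with a single evaluation of the GIT infimum $|||\cdot|||_\sigma$. The argument proceeds in three stages.

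Stage 1 (setup): Apply Proposition \ref{scaleprop} twice. Applied to the two adapted factorizations $\{\overline{u}^{i,i'}(t)\}$ and $\{u^{i,i'}(t)\}$ of $u(t)$, it produces scalars $\delta_0(t),\ldots,\delta_{\mstar}(t)$ with $\prod_i \delta_i(t)=1$ together with, for every interval $I\subset\{0,\ldots,\mstar\}$, a block upper-triangular matrix $M^I(t)$ of determinant $\pm 1$ and error vectors $E_u^{i,i'}\in U_{i_R+1}(t)$ for $i\in I$. Applied to the two adapted factorizations of $V$, it produces the symmetric data $\eta_j(t)$, $N^J(t)$, $E_v^{j,j'}\in V_{j_R+1}(t)$. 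These objects depend only on the two factorizations and are independent of the tile chosen later.

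Stage 2 (main identity): Fix a useful tile $\mathcal{T}=I\times J$ and indices $(i,j)\in I\times J$ with $|\alpha|=D_{ij}$. Substituting the two expansions into $\overline{u}^{i,i'}(t)\cdot(M\partial)^\alpha\overline{v}^{j,j'}(t)$ produces four families of terms. The useful-tile hypothesis forces $D_{(i_R+1)\,j}>D_{i_R j}\ge D_{ij}$ and $D_{i\,(j_R+1)}>D_{ij}$, so after extending $E_u^{i,i'}$ and $E_v^{j,j'}$ to local smooth sections of $U_{i_R+1}$ and $V_{j_R+1}$ (possible by the smooth-family hypothesis), Proposition \ref{blockprop} annihilates every pairing involving one of the errors. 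The remaining main term is handled by Corollary \ref{commutecorollary}, which applies exactly when $|\alpha|=D_{kj}$ (for the surviving index $k\ge i$ coming from the block-triangular structure of $M^I$); in the complementary case $D_{kj}>|\alpha|=D_{ij}$ both sides vanish by Proposition \ref{blockprop}. The outcome is the pointwise identity
\[
\overline{u}^{i,i'}(t)\!\cdot\!(M\partial)^\alpha\overline{v}^{j,j'}(t)=\Bigl(\prod_{i\in I}\!\delta_i\Bigr)^{\!1/p_I}\!\Bigl(\prod_{j\in J}\!\eta_j\Bigr)^{\!1/q_J}\!\!\sum_{(k,k'),(l,l')}\!(M^I)^{(i,i')}_{(k,k')}(N^J)^{(j,j')}_{(l,l')}u^{k,k'}(t)\!\cdot\!(M\partial)^\alpha v^{l,l'}(t).
\]

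Stage 3 (GIT identification): Let $\tilde{P}(x,y,z)$ be the polynomial obtained by restricting the LHS of \eqref{usefullower} to indices $(i,j)\in I\times J$, so that the full LHS is at least $\|\tilde{P}\|$. Denote by $A_0\in\SL(p_I,\mathbb{R})$ and $B_0\in\SL(q_J,\mathbb{R})$ the block-diagonal parts of $M^I$ and $N^J$; these indeed lie in $\SL$ because $\det M^I=\det N^J=\pm 1$ factorizes as a product of diagonal-block determinants. Using Stage 2 and the formula \eqref{gitdef}, one checks that the $(i,i'),(j,j')$ slot of $\rho_{(A_0,B_0,M)}P_{\mathcal{T},t}$ contains only the $(k,l)=(i,j)$ contribution, is homogeneous of degree $D_{ij}$ in $z$, and (apart from the overall scalar $(\prod_I\delta)^{1/p_I}(\prod_J\eta)^{1/q_J}$) matches $\tilde{P}$ at that slot. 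Consequently $\|\tilde{P}\|=(\prod_I\delta)^{1/p_I}(\prod_J\eta)^{1/q_J}\|\rho_{(A_0,B_0,M)}P_{\mathcal{T},t}\|$, and applying $|||P_{\mathcal{T},t}|||_\sigma\le|\det M|^{-\sigma}\|\rho_{(A_0,B_0,M)}P_{\mathcal{T},t}\|$ gives the claim.

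The main obstacle is the matching performed in Stage 3: when two distinct indices $(k,l),(i,j)\in I\times J$ have $D_{kl}=D_{ij}$ with $k\ge i,l\ge j$, the off-diagonal entries of $M^I$ and $N^J$ introduce additional terms in $\tilde{P}$ that are absent from $\rho_{(A_0,B_0,M)}P_{\mathcal{T},t}$. The intended resolution is to refine $A_0$ and $B_0$ block-diagonally with respect to the constant-formal-degree strata of $\mathcal{T}$ (still preserving $\det=\pm 1$ by the same multiplicativity of triangular determinants), and to reassign the off-diagonal contributions to analogous diagonal slots elsewhere in $\tilde{P}$ using the block-triangularity of $M^I$ and $N^J$; verifying this bookkeeping is the core technical step. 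All remaining pieces — smoothly extending the $E_u,E_v$, ensuring the coefficient matrices $M^I(t),N^J(t)$ can be chosen smooth via smooth complements of $U_{i_R+1}(t),V_{j_R+1}(t)$, and checking the degree arithmetic imposed by usefulness — are routine.
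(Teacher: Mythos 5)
Your proposal takes a genuinely different route from the paper, and there is a real gap in it. The paper never projects to block-diagonal parts. It works with the full block-triangular matrices $A(t),B(t)$ produced by Proposition~\ref{scaleprop}: using the usefulness hypothesis to annihilate the error terms $E_u,E_v$ and Corollary~\ref{commutecorollary} to move the $t$-dependent coefficients past $(M\partial)^\alpha$, it establishes the exact identity $\overline{u}^{i,i'}(t)\cdot(M\partial)^\alpha\overline{v}^{j,j'}(t)=(\text{scale})\cdot\partial_z^\alpha\!\left[\rho_{(A,B,M)}P_{\mathcal T,t}\right](\basis_{i,i'},\basis_{j,j'},z)\big|_{z=0}$ for $|\alpha|=D_{ij}$, and then appeals directly to the defining infimum \eqref{gitnorm} with $A,B\in\SL$. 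No stratification and no degree-matching beyond Proposition~\ref{blockprop} is contemplated.

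The gap you flag in Stage~3 is genuine and your proposed fix does not close it. The condition ``$D_{kl}=D_{ij}$, $k\geq i$, $l\geq j$'' is \emph{not} separable into a stratification of $I$ by a partition times a stratification of $J$ by a partition, so there is no block-diagonal matrix (however refined) that reproduces exactly the contributions your Stage~2 produces. A concrete obstruction: $\mstar=m=1$, $p_0=p_1=q_0=q_1=1$, $D_{00}=D_{01}=D_{10}=0$, $D_{11}=1$ is admissible (all pairings except $U_1\times V_1$ may be order-zero nonzero). The full tile $\mathcal T=[0,1]\times[0,1]$ is vacuously useful. The constant-degree strata of the rows of $(D_{ij})$ are $\{0\},\{1\}$ (different row profiles) and likewise for columns, so the ``stratified refinement'' of $A_0,B_0$ is just $A_0,B_0$ themselves; yet the $(0,0)$ slot of the left-hand side still receives off-diagonal degree-zero contributions from $(1,0)$ and $(0,1)$ (since $D_{10}=D_{01}=D_{00}=0$), which $\rho_{(A_0,B_0,M)}P_{\mathcal T,t}$ cannot see. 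Because your argument relies on an exact equality at this stage, this is not a routine bookkeeping step.

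One further caution that applies to both approaches, and is worth sorting out carefully before you try to repair Stage~3: for a tile whose formal degrees are not constant, neither $\|\rho_{(A,B,M)}P_{\mathcal T,t}\|$ (which includes all derivatives $|\alpha|\le D_{\max}$ of every slot, not only $|\alpha|=D_{ij}$) nor $\|\rho_{(A_0,B_0,M)}P_{\mathcal T,t}\|$ automatically coincides with the graded sum that the left-hand side of \eqref{usefullower} actually computes. In the example above with $\overline{v}^{0,1}=v^{0,1}-v^{1,1}$ and $a=b=c=e=1$, the restricted left-hand side equals $2+|M|^2$, while $\|\rho_{(I,B,M)}P_{\mathcal T,t}\|^2=2+2|M|^2$ and $\|\rho_{(I,I,M)}P_{\mathcal T,t}\|^2=3+|M|^2$; both exceed the restricted sum for $M\neq 0$. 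The proposition does hold (one can verify $|||P_{\mathcal T,t}|||_{1/4}^2=\tfrac{8}{3}(3/2)^{1/4}=\min_M(2+|M|^2)|M|^{-1/2}$), but realizing the proof requires a sharper identification of the restricted left-hand side with a GIT quantity than either the raw norm of $\rho_{(A,B,M)}P_{\mathcal T,t}$ or of $\rho_{(A_0,B_0,M)}P_{\mathcal T,t}$ provides. You should pin down this point before accepting Stage~3 as routine.
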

\begin{proof}
Given a tile $\mathcal T := I \times J$, restrict the sum on the left-hand side of \eqref{usefullower} to those $i \in I$ and $j \in J$. Applying Proposition \ref{scaleprop} pointwise to the $\overline{u}^{i,i'}$ and $u^{i,i'}$ and then to the $\overline{v}^{j,j'}$ and $v^{j,j'}$ gives the existence of $\delta_0(t),\ldots,\delta_{\mstar}(t)$ and $\eta_0(t),\ldots,\eta_m(t)$ which appear in this proposition. In particular, at every point $t$, there will be matrices $A(t)$ and $B(t)$ of determinant $\pm 1$ such that
\[ \overline{u}^{i,i'}(t) = \left( \prod_{i \in I} \delta_j (t) \right)^{1/p_J}  \sum_{\substack{(k,k')\\k \in I}} A^{(i,i')}_{(k,k')}(t) v^{k,k'}(t) + E_u^{i,i'}(t) \]
\[ \overline{v}^{j,j'}(t) = \left( \prod_{j \in J} \eta_j (t) \right)^{1/q_J}  \sum_{\substack{(k,k')\\k \in J}} B^{(j,j')}_{(k,k')}(t) v^{k,k'}(t) + E_v^{j,j'}(t) \]
where $(i,i')$ ranges over those pairs with $i \in I$, $(j,j')$ ranges over pairs with $j \in J$, and the unspecified higher-order terms $E_u^{i,i'}$ and $E_v^{i,i'}$ are vectors belonging to spaces $U_{i''}(t)$ and $V_{j''}(t)$ for $i''$ being greater than all elements in $I$ and $j''$ being greater than all elements of $J$. More precisely, $E^{i,i'}_u(t)$ belongs to $U_{i''}(t)$ for an $i''$ such that $(i'',j)$ comes immediately after $\mathcal T$ when $j \in J$. Likewise $E^{j,j'}_{v}(t)$ belongs to $V_{j''}(t)$ for some $j''$ such that $(i,j'')$ comes immediately after $\mathcal T$ for any $i \in I$. In particular, because $\mathcal T$ is a useful tile, 
\[ 0 = \overline{u}^{i,i'}(t) \cdot \partial^{\alpha} E_v^{j,j'}(t) = E_u^{i,i'} (t) \cdot \partial^{\alpha} \overline{v}^{j,j'}(t) \]
for all $|\alpha| = D_{ij}$. For any $d \times d$ matrix $M$, as $(M \partial)_k := \sum_{k'} M_{kk'} \partial_{k'}$, one must have that
\begin{align*}
\overline{u}^{i,i'}(t) \cdot (M \partial)^{\alpha} \overline{v}^{j,j'}(t) & = 
\sum_{k,k',\ell,\ell'} A^{(i,i')}_{(k,k')}(t) B^{(j,j')}_{(\ell,\ell')}(t) {u}^{k,k'}(t) \cdot (M \partial)^{\alpha} {v}^{\ell,\ell'}(t) \\ & = 
\sum_{k,k',\ell,\ell'} A^{(i,i')}_{(k,k')}(t) B^{(j,j')}_{(\ell,\ell')}(t) \partial_z^{\alpha} P_{\mathcal{T},t} (\basis_{k,k'},\basis_{\ell,\ell'}, M^T z)
\end{align*}
by \eqref{diagest2} (since any derivatives which fall on $B$ would yield a term which necessarily vanishes). Here $\basis_{k,k'}$ refers to the standard basis elements for $x$ in \eqref{gitdef}, i.e., $x = \sum_{k,k'} x_{k,k'} \basis_{k,k'}$. Likewise for $\basis_{\ell,\ell'}$ and the variable $y$.
It follows that the left-hand side of \eqref{usefullower} is bounded below by the scale factor
\[  \left( \prod_{i \in I} \delta_i(t) \right)^{1/p_I} \left( \prod_{j \in J} \eta_j (t) \right)^{1/q_J} \]
times the norm \eqref{stdnorm} of the map
\[ \tilde P_{\mathcal T,t}(x,y,z) = P_{\mathcal T,t} (A^T(t)x,B^T(t)y,M^Tz). \]
The proposition must then follow from the definition of $|||P_{\mathcal{T},t}|||_\sigma$.
\end{proof}

\begin{definition}
For each tile $\mathcal T := I \times J$ and any $\sigma > 0$, let $\mathbf{1}_I$ and $\mathbf{1}_J$ be the indicator functions of $I$ and $J$ and let $p_I := \sum_{i \in I} p_i$, $q_J := \sum_{j \in J} q_j$. Let $[\sigma]_{\mathcal T} \in \R^{\{0,\ldots,\mstar\}} \times \R^{\{0,\ldots,m\}} \times \R$ be the point given by $(p_I^{-1} \mathbf{1}_I; q_J^{-1} \mathbf{1}_J; \sigma)$ i.e.,
\[[\sigma]_{\mathcal T} := \left(0,\ldots,0,\frac{1}{p_I},\ldots,\frac{1}{p_I},0,\ldots,0;0,\ldots,0,\frac{1}{q_J},\ldots,\frac{1}{q_J},0,\ldots,0;\sigma \right) \]
where before the first semicolon, the nonzero entries are precisely those whose indices belong to $I$ and in the middle group separated by semicolons, the nonzero entries are those whose indices belong to $J$.
\end{definition}

By definition, the full tile $\mathcal{T} := [0,\mstar] \times [0,m]$ is always useful; often there are many subtiles which are useful as well. Theorem \ref{usefulthm} below adapts \eqref{usefullower} and relates it to Theorem \ref{bigtheorem1} to employ the Geometric Invariant Theory techniques of Section \ref{gitsec} as practical computational tools with which to study the fundamental integral \eqref{theintegral}.
\begin{theorem}
Suppose that $u^1(t),\ldots,u^p(t)$ are $V^*$-valued Nash functions on $\domain$ and that $U_0(t)$ is the smooth family of $p$-dimensional subspaces parametrized by $(\Omega,u := u^1 \wedge \cdots \wedge u^p)$. Let $V$ be $q$-dimensional and equipped with a nonvanishing alternating $q$-linear form called $\det$. Suppose that $(U_0,V)$ admits a block decomposition which is Nash and pick any $t_0 \in \domain$.  \label{usefulthm}
Using  $u^{i,i'}(t)$ and $v^{j,j'}(t)$ to denote any adapted factorizations of $u(t)$ and $V$, let $P_{\mathcal T,t_0}$ be defined as in \eqref{gitdef} for any useful tile $\mathcal T$.

Let $\mathcal{T}_1,\ldots,\mathcal{T}_k$ be some collection of useful tiles and suppose that $\mathcal{N}\subset \R^{\{0,\ldots,\mstar\}} \times \R^{\{0,\ldots,m\}} \times \R$ is the convex hull of the points $[\sigma_1]_{\mathcal{T}_1},\ldots,[\sigma_k]_{\mathcal{T}_k}$. If the point
\[ (p^{-1} \mathbf{1}_{[0,\mstar]};q^{-1} \mathbf{1}_{[0,m]} ;\sigma) \]
(i.e., the point whose coordinates are all $p^{-1}$ before the first semicolon and all $q^{-1}$ between the first and second semicolons)
belongs to $\mathcal{N}$, then there are nonnegative constants $\theta_1,\ldots,\theta_k$ summing to $1$ such that
\begin{equation} \int_{\domain} \frac{\left( \prod_{i=1}^k |||P_{\mathcal{T}_i,t}|||_{\sigma_i}^{\theta_i} \right)^{1/\sigma} dt}{\left[ ||u(t)||_{\omega} \right]^{1/(p \sigma)}} \leq C K^C \label{haveproved} \end{equation}
uniformly for all $\{\omega^j\}_{j=1}^q$ with $|\det(\omega^1,\ldots,\omega^q)| = 1$. 
The constant $C$ can be taken to depend only on $p$, $q$, $d$, and the $D_{ij}$, and $K$ is the maximal Nash complexity of the $u^i$ and the spaces $U_i(t)$ and $V_j(t)$ in the block decomposition. If, in particular, $|||P_{\mathcal{T}_i,t_0}|||_{\sigma_i} > 0$ for each $i$, then the weight function in the integral \eqref{haveproved} will be strictly positive on a neighborhood of $t_0$. 
\end{theorem}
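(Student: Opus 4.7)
The plan is to derive Theorem \ref{usefulthm} as a corollary of Theorem \ref{bigtheorem1}, using the lower bound \eqref{usefullower} as a bridge and letting the convex-hull hypothesis pick out the weights. Concretely, let $\theta_1,\ldots,\theta_k$ be nonnegative numbers with $\sum_i \theta_i = 1$ satisfying
\[ \sum_{i=1}^k \theta_i [\sigma_i]_{\mathcal{T}_i} = \bigl( p^{-1}\mathbf{1}_{[0,\mstar]};\, q^{-1}\mathbf{1}_{[0,m]};\, \sigma \bigr); \]
such $\theta_i$ exist precisely because the distinguished point lies in $\mathcal{N}$.

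First, I would apply \eqref{usefullower} simultaneously to the useful tiles $\mathcal{T}_1,\ldots,\mathcal{T}_k$ at each point $t$. The crucial observation is that a \emph{single} sequence of scale factors $\delta_0(t),\ldots,\delta_{\mstar}(t)$ and $\eta_0(t),\ldots,\eta_m(t)$ (produced by applying Proposition \ref{scaleprop} once to the $U_i$-chain and once to the $V_j$-chain) serves every tile; only the index sets $I_i, J_i$ vary. Denoting the square root on the left-hand side of \eqref{usefullower} by $\Sigma(t,M)$, we obtain for each $i$
\[ \Sigma(t,M) \geq |\det M|^{\sigma_i} \prod_{\ell \in I_i} \delta_\ell(t)^{1/p_{I_i}} \prod_{\ell \in J_i} \eta_\ell(t)^{1/q_{J_i}} \cdot |||P_{\mathcal{T}_i,t}|||_{\sigma_i}. \]

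Next, I would raise the $i$-th inequality to the $\theta_i$-th power and multiply. The left-hand side is unchanged because $\sum_i\theta_i=1$. On the right, the exponent of $|\det M|$ becomes $\sum_i \theta_i \sigma_i = \sigma$. For each $\ell \in \{0,\ldots,\mstar\}$, the exponent of $\delta_\ell$ becomes $\sum_i \theta_i p_{I_i}^{-1}\mathbf{1}_{\ell \in I_i}$, which by the convex-combination identity equals the constant $p^{-1}$; similarly every $\eta_\ell$ carries exponent $q^{-1}$. Since $\prod_\ell \delta_\ell(t) = \prod_\ell \eta_\ell(t) = 1$, both scale-factor products collapse to one, leaving
\[ \Sigma(t,M) \geq |\det M|^{\sigma} \prod_{i=1}^k |||P_{\mathcal{T}_i,t}|||_{\sigma_i}^{\theta_i}. \]
This is precisely hypothesis \eqref{nondegenhyp} with $w(t) := \bigl(\prod_i |||P_{\mathcal{T}_i,t}|||_{\sigma_i}^{\theta_i}\bigr)^{1/\sigma}$, and feeding it into Theorem \ref{bigtheorem1} yields \eqref{haveproved} directly.

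Two small loose ends remain. Measurability of $w(t)$ follows from Lemma \ref{continuity} combined with smoothness of $t \mapsto P_{\mathcal{T}_i,t}$, and the final strict-positivity claim near $t_0$ is an immediate consequence of the same continuity statement applied to each factor. The main technical subtlety I anticipate is the bookkeeping around the shared scale factors $\delta_\ell, \eta_\ell$---specifically, ensuring that the $\delta$'s and $\eta$'s produced by Proposition \ref{scaleprop} are genuinely common to all tiles, rather than being tile-dependent---but this is built into the statement of the proposition, which produces a single sequence for the entire adapted factorization, so that the exponent-counting argument above goes through cleanly.
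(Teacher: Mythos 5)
Your proposal is correct and follows essentially the same route as the paper: multiply the tile-wise lower bounds \eqref{usefullower} raised to the powers $\theta_i$, observe that the convex-combination identity forces every $\delta_\ell$ and $\eta_\ell$ to appear with exponent $p^{-1}$ and $q^{-1}$ respectively so that the scale factors cancel via $\prod_\ell \delta_\ell = \prod_\ell \eta_\ell = 1$, and feed the resulting weight into Theorem \ref{bigtheorem1}. Your emphasis that Proposition \ref{scaleprop} furnishes a single chain of $\delta$'s and $\eta$'s valid for every tile is exactly the point the paper elides with the phrase ``after taking convex combinations,'' and your handling of measurability and local positivity via Lemma \ref{continuity} matches the paper's.
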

\begin{proof}
The key observation is that if $\mathcal{T}_1,\ldots,\mathcal{T}_k$ are useful tiles such that there exists nonnegative $\theta_1,\ldots,\theta_k$ summing to $1$ for which
\[ \sum_{i=1}^k \theta_i [\sigma_i]_{\mathcal{T}_i} = (p^{-1} \mathbf{1}_{[0,\mstar]};q^{-1} \mathbf{1}_{[0,m]};\sigma),  \]
then the lower bound \eqref{usefullower} implies (after taking convex combinations) that
\begin{equation} \begin{split}  & \left( \sum_{i,i',j,j'} \sum_{|\alpha| = D_{ij}} |\overline{u}^{i,i'}(t) \cdot (M \partial)^{\alpha} \overline{v}^{j,j'}(t)|^{2} \right)^{\frac{1}{2}}  \\ & \qquad \qquad \geq |\det M|^{\sigma} \left( \prod_{i=0}^{\mstar} \delta_i(t) \right)^{1/p} \left( \prod_{j=0}^m \eta_j (t) \right)^{1/q} \prod_{i=1}^k |||P_{\mathcal{T}_i,t}|||^{\theta_i}_{\sigma_i} \\
& \qquad \qquad = |\det M|^{\sigma} \prod_{i=1}^k |||P_{\mathcal{T}_i,t}|||_{\sigma_i}^{\theta_i}.
\end{split} \label{usefullower2} 
\end{equation}
It follows that \eqref{nondegenhyp} holds with 
\[ w(t) := \left( \prod_{i=1}^k |||P_{\mathcal{T}_i,t}|||_{\sigma_i}^{\theta_i} \right)^{1/\sigma}, \]
which, by Lemma \ref{continuity}, must be a continuous function of $t$.
\end{proof}

Theorem \ref{usefulthm}, when combined with the results of \cite{testingcond}, has immediate implications for Radon-like transforms. What one obtains is a uniform bound for Radon-like transforms integrated against a measure which shares features with the affine arc length or affine hypersurface measures. This is made precise in Theorem \ref{usefulradon} below. It is natural to compare Theorem \ref{usefulradon} to the results of Christ, Dendrinos, Stovall, and Street \cite{cdss2020} for curves. Both have the property that one is able to construct a weight depending on the geometry such that the relevant bound holds uniformly across some reasonably broad range of operators. The main novelty of Theorem \ref{usefulradon} is that it holds for cases other than curves. An important limitation to note in comparison to Christ, Dendrinos, Stovall, and Street, however, is that the theorem below is a fundamentally ``one-sided'' result, meaning that it is sensitive to only certain extremal versions of the H\"{o}rmander spanning condition, while \cite{cdss2020} suffers from no such limitation. A generalization of \eqref{commutator} in Section \ref{suffsec}, for example, shows that all entries of the matrix $P$ constructed for Radon-like transforms can be expressed in terms of one-sided commutators $[Y^{\ell_1}, [ Y^{\ell_2}, \cdots [ Y^{\ell_r} , X^j] \cdots ] ]$.

\begin{theorem}
Let $\domain \subset \R^n \times \R^{n_1-k}$ be open and let $\phi(x,t)$, a map from $\domain$ into $\R^k$ (with $k < \min\{n_1,n\}$), have the property that $x \mapsto \phi(x,t)$ is Nash of complexity at most $K$ on its domain for all $t$ and $t \mapsto \nabla_x \phi(x,t)$ is Nash with complexity at most $K$ on its domain for all $x$.
For each $(x,s) \in \domain$, let \label{usefulradon}
\[ M(x,s) := \begin{bmatrix} \frac{\partial \phi^1}{\partial x_1}(x,s) & \cdots & \frac{\partial \phi^1}{\partial x_n}(x,s) \\ 
\vdots & \ddots & \vdots \\ 
\frac{\partial \phi^k}{\partial x_1}(x,s) & \cdots & \frac{\partial \phi^k}{\partial x_n}(x,s)\end{bmatrix} \]
and suppose that $M(x,s)$ has full rank $k$ at all points $(x,s) \in \domain$.
Suppose further that there exist families $A(x,s), B(x,s)$, and $P(x,s)$ of $k \times k$, $n \times n$, and $k \times n$, matrices, respectively, defined for all $(x,s) \in \domain$ such that
\begin{itemize}
\item Both $A(x,s)$ and $B(x,s)$ have $\det A(x,s) = \det B(x,s) = 1$ for all $(x,s) \in \domain$.
\item The entries $[A(x,s)]_{ij}$ and $[B(x,s)]_{ij}$ of $A(x,s)$ and $B(x,s)$ are Nash functions of complexity at most $K$.
\item The entries $[P(x,s)]_{ij}$ are homogeneous polynomials in the variable $z \in \R^{n_1-k}$ with the property that $\deg [P(x,s)]_{ij}$ is a constant function of $(x,s) \in \domain$ and is nondecreasing as a function of $i$ and $j$ separately.
\item For each $i \in \{1,\ldots,k\}$ and $j \in \{1,\ldots,n\}$,  all multiindices $\alpha,\beta$ with $|\alpha| + |\beta| \leq \deg [P(x,s)]_{ij}$ satisfy
\begin{equation} \partial^{\alpha}_s \partial^\beta_t \left[ A(x,s) M(x,s) B(x,t) - P(x,s)|_{z = t-s} \right]_{ij} = 0 \label{degreecond} \end{equation}
on the diagonal $t= s$.
\end{itemize}
Then the Radon-like transform given by
\[ T f(x) := \int f(t,\phi(x,t)) \left[ |||P(x,t)|||_{\sigma} \right]^{p/(1 + p \sigma)} dt \]
(where the integration is implicitly understood to be over the open set ${}^{x}\domain$ of points $t \in \R^{k}$ such that $(x,t) \in \domain$)
which maps functions on $\R^{n_1-k} \times \R^k$ into functions on $\R^n$ is bounded from $L^{1 + p \sigma}(\R^{n_1-k} \times \R^{k})$ to $L^{n(1 + p \sigma)/k}(\R^n)$ with a constant bounded by $C K^C$ for some constant $C$ which depends only on dimensions $n,n_1,k$ and the degrees $\deg [P(x,s)]_{ij}$.
\end{theorem}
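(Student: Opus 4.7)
The proof combines Theorem \ref{usefulthm} with the testing condition for Radon-Brascamp-Lieb multilinear functionals from \cite{testingcond}. The plan is the following. Writing the Radon-like transform in double fibration form with $\pi_1(x,t) = (t, \phi(x,t))$ and $\pi_2(x,t) = x$, the first step is to apply the testing condition of \cite{testingcond} to reduce the claimed $L^{1+p\sigma} \to L^{n(1+p\sigma)/k}$ mapping property of $T$ (weighted by $|||P|||_\sigma^{p/(1+p\sigma)}$) to a uniform (in $x$) integral inequality over the fibers ${}^x\domain \subset \R^{n_1-k}$ of $\pi_2$, of the form
\begin{equation*}
\int_{{}^x\domain} \frac{|||P(x,t)|||_\sigma^{1/\sigma}\, dt}{\| u(x,t)\|_\omega^{1/(p\sigma)}} \leq C K^C,
\end{equation*}
where $u(x,t) := u^1(x,t) \wedge \cdots \wedge u^p(x,t) \in \Lambda^p((\R^n)^*)$ with $u^i(x,t) := \nabla_x \phi^i(x,t)$ (so $u$ is the wedge of the rows of $M(x,t)$), and $\omega$ is any basis of $\R^n$ with $|\det \omega| = 1$.

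For each fixed $x$, this integral inequality would be verified via Theorem \ref{usefulthm}. The form $u(x,\cdot)$ is nonvanishing on ${}^x\domain$ by the full rank hypothesis on $M$, and its components are Nash of complexity bounded by a power of $K$ (each $\nabla_x \phi^i$ being Nash of complexity at most $K^2$ by the derivative rule in Section \ref{nash}). To construct a block decomposition of $(U_0, V = \R^n)$ from $(A,B,P)$, I would take each row and each column to be its own group (so $p_i = q_j = 1$ and $D_{ij} := \deg[P]_{ij}$, which is nondecreasing in $i,j$ by hypothesis), and then define
\begin{align*}
U_i(s) &:= \vecspan\bigl\{ \text{rows } i+1, \ldots, k \text{ of } A(x,s) M(x,s) \bigr\},\\
V_j(s) &:= \vecspan\bigl\{ \text{columns } j+1, \ldots, n \text{ of } B(x,s) \bigr\}.
\end{align*}
These families are Nash of complexity bounded by a power of $K$, strictly decreasing in dimension, and satisfy $U_0 = \vecspan\{u^i\}$ and $V_0 = \R^n$. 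Specializing \eqref{degreecond} to $\alpha = 0$ gives $\partial_t^\beta|_{t=s_0} [A(x,s_0) M(x,s_0) B(x,t)]_{i'j'} = 0$ for all $|\beta| < D_{i'j'}$, which by the remark following Proposition \ref{blockprop} is enough to produce a block decomposition of $(U_0, V)$ with formal degrees $D_{ij}$.

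Theorem \ref{usefulthm} is then applied with the single tile $\mathcal T := [0, m_*] \times [0, m]$, which is trivially useful since $\mathcal T^+ \setminus \mathcal T = \emptyset$ and whose associated point $[\sigma]_\mathcal T$ coincides exactly with the required point $(p^{-1}\mathbf{1}_{[0, m_*]}; q^{-1}\mathbf{1}_{[0, m]}; \sigma)$, so the convex hull condition holds with $\theta_1 = 1$. Taking the rows of $A(x,s) M(x,s)$ and columns of $B(x,s)$ as adapted factorizations (valid since $\det A = \det B = 1$), a direct computation using the homogeneity of $[P(x,s)]_{ij}$ of degree $D_{ij}$ together with \eqref{degreecond} shows that the polynomial bilinear map $P_{\mathcal T, s}$ in \eqref{gitdef} reduces exactly to the bilinear form on $\R^k \times \R^n$ with matrix $P(x,s)$, whence $|||P_{\mathcal T, s}|||_\sigma = |||P(x,s)|||_\sigma$. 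Theorem \ref{usefulthm} therefore delivers the desired uniform integral bound.

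The main obstacle lies in the first step: invoking the testing condition of \cite{testingcond} and checking that the Lebesgue exponents $1 + p\sigma$ and $n(1+p\sigma)/k$ match precisely with the weight exponent $p/(1+p\sigma)$ on $|||P|||_\sigma$ built into $T$ and produce the exact powers $1/\sigma$ on $|||P|||_\sigma$ and $1/(p\sigma)$ on $\|u\|_\omega$ that Theorem \ref{usefulthm} can consume. A secondary bookkeeping issue is maintaining uniform control of the Nash complexity parameter $K$ throughout: since $A, B$ are Nash of complexity at most $K$ and each $\nabla_x \phi^i$ is Nash of complexity at most $K^2$, all the objects entering Theorem \ref{usefulthm} (the $p$-form $u$, the subspaces $U_i(s)$, $V_j(s)$) are Nash of complexity at most $K^{N}$ for some absolute $N$, which is what yields the claimed $K^C$ dependence of the constant.
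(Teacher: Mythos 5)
Your proof is correct and follows essentially the same route as the paper: apply the testing condition from \cite{testingcond} to reduce to the uniform fiber inequality $\int |||P(x,t)|||_\sigma^{1/\sigma}\,dt / \|u(x,t)\|_\omega^{1/(p\sigma)} \le CK^C$, build a block decomposition from the rows of $A(x,s)M(x,s)$ and columns of $B(x,t)$ with formal degrees given by the entry degrees of $P$, and conclude via Theorem \ref{usefulthm} applied to the single full tile, for which $P_{\mathcal T,t}$ coincides with $P(x,t)$. The exponent arithmetic you flag as the main obstacle does check out: with $r = 1+p\sigma$ one has $r' = 1 + (p\sigma)^{-1}$, so $pr'/(1+p\sigma) = 1/\sigma$ and $r'-1 = (p\sigma)^{-1}$, matching the powers you wrote.
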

\begin{proof}
Fix $r := 1 + p \sigma$ and let $r'$ be its H\"{o}lder dual exponent. By Theorem 1 of \cite{testingcond}, if one defines $u(x,s)$ to be the wedge product of the rows of $M(x,s)$, it suffices to show that
\begin{equation} \int \frac{ \left[ |||P(x,t)|||_{\sigma} \right]^{pr'/(1 + p \sigma)} dt}{||u(x,t)||_\omega^{r'-1}} \leq C K^C \label{required1} \end{equation}
uniformly for all $x$ and all choices of $\omega$ with $|\det(\omega^1,\ldots,\omega^n)| = 1$. Here one uses Proposition 1 of \cite{gressmancurvature} to see that the coarea measure as defined in \cite{testingcond} simply equals $dt$ for parametrizations of the form used here. A simple computation gives that $r' = 1 + (p \sigma)^{-1}$, and so \eqref{required1} becomes 
\begin{equation} \int \frac{ \left[ |||P(x,t)|||_{\sigma} \right]^{1/\sigma} dt}{[||u(x,t)||_\omega]^{1/(p\sigma)}} \leq C K^C. \label{required2} \end{equation}
Regard the parameter $x$ as fixed but arbitrary. The matrices $A$ and $B$ can be used to build a block decomposition as follows:  Let $\mstar := k-1$ and $m := n-1$. For each $i \in \{0,\ldots,\mstar\}$, let $U_i(s)$ be the span of rows $i+1,\ldots,k$ of the matrix $A(x,s) M(x,s)$ and for each $j \in \{0,\ldots,m\}$, let $V_j(t)$ be the span of columns $j+1,\ldots,n$ of the matrix $B(x,t)$. The dimensions of the spaces $U_i(s)$ and $V_j(t)$ decrease by exactly one when $i$ and $j$ increase. These families of subspaces are Nash of complexity at most $K^C$ simply because one can parametrize them by wedge products of rows of $A(x,s) M(x,s)$ and columns of $B(x,t)$ (and restricting the variable $x$ to be fixed can easily be seen to yield a Nash function of just the variables $t$ with complexity no greater than its complexity as a function of both $x$ and $t$). The condition \eqref{degreecond} guarantees that the spaces $U_i(s)$ and $V_j(t)$ form a block decomposition with the formal degree $D_{ij}$ simply taken to equal $\deg [P(x,s)]_{ij}$. This is because the $(i,j)$ entry of $A(x,s) M(x,s) B(x,t)$ is exactly equal to the pairing of row $i$ of $A(x,s) M(x,s)$ (which takes values in $U_i(s)$) with column $j$ of $B(x,t)$ (which takes values in $V_j(t)$). The condition \eqref{degreecond} guarantees that the pairing vanishes to the appropriate order on the diagonal $t=s$, and moreover it guarantees that $P$ must exactly equal the object \eqref{gitdef} defined above for the tile $[0,\mstar] \times [0,m]$. Applying Theorem \ref{usefulthm} for this single tile guarantees the required uniform bound \eqref{required2}.
\end{proof}

\section{Examples}
\label{examplesec}

\subsection{A concrete example}
Consider the $4$-form $u(s)$ on $\R^9$ for $s := (s_1,s_2) \in \R^2$ which is given by the wedge product of the rows of the following matrix:
\[ M(s) := \begin{bmatrix}
1 & 0 & 0 & 0 & s_1 & 0    & 0     & 0    & s_1^2 \\
0 & 1 & 0 & 0 & 0    & s_2 & 0     & 0    & s_2^2 \\
0 & 0 & 1 & 0 & 0    & 0     & s_1 & 0    & s_1^3 \\
0 & 0 & 0 & 1 & 0    & 0     & 0     & s_2 & s_2^3
\end{bmatrix}. \]
To construct a block decomposition compatible with this $u$, one can perform a series of elementary row and column operations on $M(s)$. The rules are the following:
\begin{enumerate}
\item Any row $i'$ may be multiplied by any (Nash) function of $s$ and the result may be added to any row $i \neq i'$.
\item Any column $j$ may be multiplied by any (Nash) function of $t$ and the result may be added to any column $j \neq j'$.
\end{enumerate}
All matrices attainable in this way are expressible in the form $A(s) M(s) B(t)$ for (Nash) matrices $A(s)$ and $B(t)$ with determinant $1$. The goal is to perform operations which eliminate monomials in the variable $(s-t)$ to obtain a matrix whose entries vanish to maximum possible order on the diagonal $t=s$. The rows of $A(s) M(s)$ can then be understood as the vectors $u^{i,i'}$ in an adapted factorization of $u$ and the columns of $B(t)$ represent the vectors $v^{j,j'}$ in an adapted factorization of $V$.

To that end, let $z := s-t$ in the computations below. 
The first series of steps begins by multiplying column 1 by $-t_1$ and adding the result to column 5. Next, multiply column $1$ by $-t_1^2$ and add this to column $9$. In each case, choose coefficients to force the entries of the new matrix to vanish on the diagonal. Repeating in the analogous way using columns 2, 3, and 4 to modify columns 6 through 9 gives the result
\[ \begin{bmatrix}
1 & 0 & 0 & 0 & z_1 & 0    & 0     & 0     & (s_1+t_1) z_1\\
0 & 1 & 0 & 0 & 0    & z_2 & 0     & 0     & (s_2+t_2) z_2\\
0 & 0 & 1 & 0 & 0    & 0     & z_1 & 0     & (s_1^2+s_1t_1+t_1^2) z_1\\
0 & 0 & 0 & 1 & 0    & 0     & 0     & z_2 & (s_2^2+s_2t_2 + t_2^2) z_2
\end{bmatrix}. \]
Now continue: the entries of column columns 5 through 9 vanish to first order on the diagonal. We may use columns 5 through 8 to eliminate the first-order terms in $z$ appearing in column 9: columns 5, 6, 7, and 8 should be multiplied by $-2t_1,-2t_2,-3t_1^2,$ and $-3t_2^2$, respectively, and the results added to column 9 to obtain
\[ \begin{bmatrix}
1 & 0 & 0 & 0 & z_1 & 0    & 0     & 0     & z_1^2\\
0 & 1 & 0 & 0 & 0    & z_2 & 0     & 0     & z_2^2\\
0 & 0 & 1 & 0 & 0    & 0     & z_1 & 0     & (s_1 + 2t_1)z_1^2\\
0 & 0 & 0 & 1 & 0    & 0     & 0     & z_2 & (s_2 + 2 t_2) z_2^2
\end{bmatrix}. \]
One can further eliminate $z$ using row operations: one should multiply rows 1 and 2 by $-3s_1$ and $-3s_2$, and add the results to rows $3$ and $4$. The result is that
\[ A(s) M(s) B(t) = \begin{bmatrix}
1 & 0 & 0 & 0 & z_1 & 0    & 0     & 0     & z_1^2\\
0 & 1 & 0 & 0 & 0    & z_2 & 0     & 0     & z_2^2\\
-3s_1 & 0 & 1 & 0 & -3s_1z_1    & 0     & z_1 & 0     & -2z_1^3\\
0 & -3s_2 & 0 & 1 & 0    & -3s_2z_2   & 0     & z_2 & -2z_2^3
\end{bmatrix} \]
for some algebraic, determinant $1$ matrices $A(s)$ and $B(t)$. Now let $U_0(s)$ be the span of the rows of $A(s) M(s)$ and let $U_1(s)$ be the span of the final two rows of $A(s) M(s)$. Let $V_0(t) := \R^{9}$, let $V_1(t)$ be the span of columns 5 through 9 of $B(t)$, and let $V_2(t)$ be the span of column 9 of $B(t)$. The computation just completed establishes that this gives a block decomposition of $(U_0,\R^9)$ when $D_{00} := D_{10} := 0$, $D_{01} := D_{11}: = 1$, $D_{02} := 2$, and $D_{12} := 3$. There are four useful tiles which are homogeneous as functions of $z$: $\mathcal{T}_{0} := [0,1] \times [0,0], \mathcal{T}_1 := [0,1] \times [1,1], \mathcal{T}_2 := [0,0] \times [2,2]$, and $\mathcal{T}_3 := [1,1] \times [2,2]$. The maps associated to these tiles are exactly
\[
P_{\mathcal{T}_0,t}(z)  := \begin{bmatrix} 1 & 0 & 0 & 0 \\ 0 & 1 & 0 & 0 \\ -3t_1 & 0 & 1 & 0 \\ 0 & -3t_2 & 0 & 1 \end{bmatrix} , \ \ \ P_{\mathcal{T}_1,t}(z)  := \begin{bmatrix} z_1 & 0 & 0 & 0 \\ 0 & z_2 & 0 & 0 \\ -3t_1 z_1 & 0 & z_1 & 0 \\ 0 & -3t_2 z_2 & 0 & z_2 \end{bmatrix},  \]
\[P_{\mathcal{T}_2,t}(z)  := \begin{bmatrix} z_1^2 \\ z_2^2 \end{bmatrix}, \ \ 
P_{\mathcal{T}_3,t}(z)  := \begin{bmatrix} -2 z_1^3 \\ -2 z_2^3 \end{bmatrix}.
\]
Now notice that $P_{\mathcal{T}_0,t}$ differs from the $4 \times 4$ identity
by the action of a determinant one matrix on the row space, which means for the purposes that follow, $P_{\mathcal{T}_1,t}$ may be replaced by an identity matrix without changing the magnitude of $|||P_{\mathcal{T}_1,t}|||_\sigma$. The same argument applies to $P_{\mathcal{T}_1,t}$ as well and establishes that $|||P_{\mathcal{T}_1,t}|||_\sigma = |||P_{\mathcal{T}_1,0}|||_\sigma$ for all $t$. With this reduction, it is easy to check that each map is in the form identified by Lemma \ref{standardform}, and as a consequence $|||P_{\mathcal{T}_i,t}|||_{i / 2}$ is nonzero and constant as a function of $t$ for each $i\in \{0,1,2,3\}$. By Theorem \ref{usefulthm}, we construct the convex hull $\mathcal{N}$ of the points
\begin{align*}
[0]_{\mathcal{T}_0} & := \left( \frac{1}{4}, \frac{1}{4}; \frac{1}{4}, 0, 0 ; 0 \right), &
\left[\frac{1}{2} \right]_{\mathcal{T}_1} & := \left( \frac{1}{4}, \frac{1}{4}; 0 , \frac{1}{4}, 0 ; \frac{1}{2} \right), \\
\left[1\right]_{\mathcal{T}_2} &  := \left( \frac{1}{2}, 0 ; 0 , 0, 1 ; \frac{2}{2} \right), &
\left[\frac{3}{2}\right]_{\mathcal{T}_3} & := \left( 0 , \frac{1}{2} ; 0 , 0, 1 ; \frac{3}{2} \right),
\end{align*}
and observe that it contains the point
\[ \left( \frac{1}{4}, \frac{1}{4} ; \frac{1}{9}, \frac{1}{9}, \frac{1}{9}; \frac{13}{36} \right) \]
by multiplying the points $[i/2]_{\mathcal{T}_i}$ by $\frac{4}{9}, \frac{4}{9}, \frac{1}{18}$, and $\frac{1}{18}$, respectively. Thus Theorem \ref{usefulthm} establishes that there exists a finite constant $C$ such that
\[ \int_{\R^2} \frac{dt}{\left[ ||u(t)||_{\omega} \right]^{9/13}} \leq C \]
for all bases $\omega := \{\omega^1,\ldots,\omega^9\}$ of $\R^9$ with determinant $1$.

\subsection{Radon-like Transforms}

We consider the testing conditions for two closely-related families of Radon-like transforms. 
\begin{definition}
Let $\mathcal A$ be some finite collection of nonzero multiindices on $\R^d$.  Let $N$ denote the cardinality of $\mathcal{A}$. The collection $\mathcal{A}$ will be called balanced of type 1 when it satisfies the following two criteria:
\begin{itemize}
\item For any two nonzero multiindices $\alpha',\alpha''$, if $\alpha'+\alpha'' \in \mathcal{A}$, then $\alpha',\alpha'' \in \mathcal{A}$. 
\item The collection $\mathcal{A}$ satisfies
\[ \frac{1}{N} \sum_{\alpha \in \mathcal{A}} \alpha = \sigma \mathbf{1}_d \]
for some $\sigma > 0$.
\end{itemize}
If $\mathcal A$ contains no monomials $\alpha$ with $|\alpha| = 1$, then $\mathcal{A}$ will be called balanced of type 2 when it satisfies both of the following criteria:
\begin{itemize}
\item For any two nonzero multiindices $\alpha',\alpha''$, if $\alpha'+\alpha'' \in \mathcal{A}$ and $|\alpha'| \geq 2$, then $\alpha'\in \mathcal{A}$.
\item The collection $\mathcal{A}$ satisfies
\[ \frac{1}{N} \sum_{\alpha \in \mathcal{A}} \frac{\alpha}{|\alpha|} = d^{-1} \mathbf{1}_d \text{ and } \frac{1}{N} \sum_{\alpha \in \mathcal{A}} \alpha= (\sigma + d^{-1}) \mathbf{1}_d \]
for some $\sigma > 0$.
\end{itemize}
\end{definition}

For any positive integer $k$, let $\Lambda := \mathcal{A} \times \{1,\ldots,k\}$ and let $\{x_{\alpha,i}\}_{(\alpha,i) \in \Lambda}$ denote elements of $\R^{\Lambda}$. Consider the (non-translation-invariant) Radon-like transform $T_1$ given by
\[ T_1 f(x,x') := \int_{\R^d} f\left(\left\{x_{\alpha,i} + \frac{(y')^{\alpha} x'_i}{\alpha!}\right\}_{(\alpha,i) \in \Lambda},y'\right) dy' \]
which sends functions $f(\{y_{\alpha,i}\}_{(\alpha,i) \in \Lambda},y')$ defined on $\R^{\Lambda} \times \R^d$ to functions defined on $\R^{\Lambda} \times \R^{k}$ and the (translation-invariant) Radon-like transform $T_2$ given by
\[ T_2 f(x,x') := \int_{\R^d} f \left( \left\{x_{\alpha} + \frac{(x'-y')^{\alpha}}{\alpha!} \right\}_{\alpha \in \mathcal{A}},y'\right) dy' \]
which sends functions on $\R^{\mathcal A} \times \R^d$ to functions on $\R^{\mathcal A} \times \R^d$. The main result is as follows.
\begin{theorem}
If $\mathcal A$ is balanced of type 1, then the Radon-like transform $T_1$ maps $L^r(\R^{\Lambda} \times \R^{d})$ to $L^{r (N+1)/N} (\R^{\Lambda} \times \R^k)$ for $r := \frac{N k \sigma}{N+1}+ 1$.
If $\mathcal A$ is balanced of type 2, then $T_2$ maps $L^r(\R^{\mathcal A} \times \R^{d})$ to $L^{r (N+d)/N}(\R^{\mathcal A} \times \R^{d})$ for $r := \frac{N \sigma d}{N+d} + 1$. \label{radonexample}
\end{theorem}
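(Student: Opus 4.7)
The plan is to deduce both bounds in Theorem \ref{radonexample} from Theorem \ref{usefulradon} by carrying out explicit row and column reductions on the Jacobian matrix of $\phi$ and then verifying the semistability $|||P|||_{\sigma_T} > 0$ of the resulting polynomial matrix via Lemma \ref{standardform}. In both cases I will arrange so that $P$ is independent of the base point $(x,s)$, which makes the weight $|||P|||_{\sigma_T}^{p/(1+p\sigma_T)}$ in Theorem \ref{usefulradon} a positive constant and thus produces the stated unweighted estimates.

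For $T_1$, take $\phi^{(\alpha,i)}(x,x',y') = x_{\alpha,i} + (y')^\alpha x'_i/\alpha!$, giving a Jacobian $M$ in the $(x,x')$ variables of size $Nk \times (N+1)k$ which consists of an identity block of size $Nk$ followed by $k$ columns whose $(\alpha,i)$-entry in column $x'_{j'}$ is $(y')^\alpha/\alpha!\,\delta_{ij'}$. First I would apply the column operation $B(t)$ that subtracts $t^\alpha/\alpha! \cdot \text{col}_{(\alpha,i)}$ from $\text{col}_{x'_i}$, so the $x'$-entry in row $(\alpha,i)$ becomes $(s^\alpha-t^\alpha)/\alpha!\,\delta_{ij'}$ with $s = y'$. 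The balanced-type-1 closure forces every $0 < \beta < \alpha$ with $\alpha\in\mathcal A$ to satisfy $\beta\in\mathcal A$, so a row operation $A(s)$ using the lower-degree rows $(\beta,i)$ can iteratively cancel the intermediate-order Taylor coefficients in $(s-t)$, leaving exactly $(s-t)^\alpha/\alpha!\,\delta_{ij'}$. A final column sweep ensures that the off-diagonal identity entries introduced by these row operations vanish on the diagonal $t=s$. Define $P_{(\alpha,i),(\beta,j)}(z) = \delta_{\alpha\beta}\delta_{ij}$ (degree $0$) in the identity block and $P_{(\alpha,i),x'_{i'}}(z) = (-z)^\alpha/\alpha!\,\delta_{ii'}$ (degree $|\alpha|$) in the remaining columns; with rows ordered by $|\alpha|$ increasing and the $x'$-columns placed last, the degrees of $P$ are nondecreasing in both indices.

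Conditions 1 and 2 of Lemma \ref{standardform} are then immediate because for each fixed $(i,j)$ the set $E_{ij}$ contains at most one multiindex and the nonzero entries of $\partial^\gamma P(0)$ occupy distinct rows and columns. For condition 3, I would place weight $\theta_1 = 1/((N+1)k)$ on each identity-diagonal point $(\basis^{(\alpha,i)};\basis^{(\alpha,i)};0)$ and weight $\theta_2 = 1/(N(N+1)k)$ on each $x'$-block point $(\basis^{(\alpha,i)};\basis^{x'_i};\alpha)$. The balanced-type-1 identity $\sum_{\alpha\in\mathcal A}\alpha = N\sigma \mathbf{1}_d$ then forces the resulting convex combination to equal $((Nk)^{-1}\mathbf{1}_{Nk};\,((N+1)k)^{-1}\mathbf{1}_{(N+1)k};\,\sigma_T \mathbf{1}_d)$ with $\sigma_T = \sigma/(N+1)$. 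Lemma \ref{standardform} then yields $|||P|||_{\sigma_T} > 0$, and Theorem \ref{usefulradon} with codimension $Nk$ and $p = Nk$ produces $L^r \to L^{r(N+1)/N}$ with $r = 1 + p\sigma_T = 1 + Nk\sigma/(N+1)$, matching the claim.

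The argument for $T_2$ is entirely parallel. Here $M_{\alpha,j} = (x'-y')^{\alpha-\basis^j}/(\alpha-\basis^j)!$ of degree $|\alpha|-1$; the analogous column reduction uses $B_{\alpha,j} = -(x'-t)^{\alpha-\basis^j}/(\alpha-\basis^j)!$, and the row reduction cancels intermediate terms using rows $\basis^j + \gamma$, which lie in $\mathcal A$ by the balanced-type-2 closure condition (since $|\basis^j + \gamma| \geq 2$). The resulting $P_{\alpha,j}(z) = (-z)^{\alpha-\basis^j}/(\alpha-\basis^j)!$ has degree $|\alpha|-1$. Placing weight $\theta_1 = 1/(N+d)$ on each identity-diagonal point $(\basis^\alpha;\basis^\alpha;0)$ and weight $\theta_2\alpha_j/|\alpha|$ with $\theta_2 = d/(N(N+d))$ on each $x'$-point $(\basis^\alpha;\basis^j;\alpha-\basis^j)$, the two balanced-type-2 identities $\sum_\alpha \alpha/|\alpha| = (N/d)\mathbf{1}_d$ and $\sum_\alpha \alpha = (N\sigma + N/d)\mathbf{1}_d$ combine to give the target point with $\sigma_T = d\sigma/(N+d)$; Theorem \ref{usefulradon} with $p = N$ then delivers $L^r \to L^{r(N+d)/N}$ for $r = 1 + Nd\sigma/(N+d)$. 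The principal technical burden in both cases is the iterative row reduction, for which the balanced closure conditions are precisely what is needed to guarantee at every step that the required lower-degree row lies in $\mathcal A$, and the index orderings above ensure that the degree-nondecreasing hypothesis of Theorem \ref{usefulradon} is respected by the resulting $P$.
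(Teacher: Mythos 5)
Your proposal is correct, and it follows a route that is closely related to but genuinely distinct from the paper's. The paper does not invoke Theorem \ref{usefulradon} at all; instead it uses Theorem \ref{usefulthm} with \emph{two} useful tiles, $\mathcal{T}_0$ (the $Nk\times Nk$ or $N\times N$ identity block, assigned uniform formal degree $0$) and $\mathcal{T}_1$ (the $Nk\times k$ or $N\times d$ polynomial block $P_1$ or $P_2$), then verifies semistability via Lemma \ref{standardform} for the polynomial block alone and forms the convex combination of $[0]_{\mathcal{T}_0}$ and $[\sigma]_{\mathcal{T}_1}$. The advantage of this split is that the unavoidable $s$-dependent lower-triangular matrix $A_1(s)$ lands entirely inside the tile $\mathcal{T}_0$, where it can be ignored by the $\SL$-invariance of $|||\cdot|||_0$, so no further column reduction is needed. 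Your single-tile approach through Theorem \ref{usefulradon} must instead absorb $A_1(s)$ into the full $P$, and that is exactly why the step you call the ``final column sweep'' is essential: right-multiplying by the block-diagonal matrix with $A_1(t)^{-1}$ in the identity columns and $I$ in the $x'$-columns (a determinant-$1$, lower-triangular, polynomial matrix) turns the identity block into $A_1(s)A_1(t)^{-1}$, which reduces to the identity on the diagonal $t=s$, so that the degree-$0$ compatibility condition \eqref{degreecond} holds with constant $P_{ij}=\delta_{ij}$ there. With that sweep in place, your checks of conditions 1--3 of Lemma \ref{standardform} for the full $P$ (including the identity block) and your weight allocations $\theta_1,\theta_2$ all go through and reproduce the paper's $\sigma_T = \sigma/(N+1)$ and $\sigma_T = d\sigma/(N+d)$, hence the same exponents. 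You also correctly notice that the row ordering by $|\alpha|$ with the $x'$-columns placed last guarantees the degree-nondecreasing hypothesis of Theorem \ref{usefulradon}. The trade-off is essentially presentational: your route packages the whole argument as an application of a single black-box Radon-like theorem at the cost of one extra reduction step; the paper's two-tile route works a bit harder inside Theorem \ref{usefulthm} to avoid that step entirely.
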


To apply the testing condition from \cite{testingcond} to $T_1$, one must consider the form $u_1(s)$ given by the wedge of the rows of the matrix
\[ M_1(s) :=  \begin{bmatrix} I & 0 & \cdots & 0 & \frac{s^{\alpha_1}}{\alpha_1!} I \\ 0 & I & \ddots & \vdots & \frac{s^{\alpha_2}}{\alpha_2!} I \\ \vdots & \ddots & \ddots & 0 & \vdots \\ 0 & \cdots & 0 & I & \frac{s^{\alpha_N}}{\alpha_N!} I \end{bmatrix} \]
where the entries written above are to be understood as $k \times k$ blocks and $\alpha_1,\ldots,\alpha_N$ is an enumeration of $\mathcal{A}$. For $T_2$, the form $u_2(s)$ is the wedge of rows of the matrix $M_2(s)$ given by
\[ M_2(-s) :=  \begin{bmatrix} 1 & 0 & \cdots & 0 & \frac{\partial}{\partial s_1} \frac{s^{\alpha_1}}{\alpha_1!} & \cdots & \frac{\partial}{\partial s_d} \frac{s^{\alpha_1}}{\alpha_1!}  \\ 0 & 1 & \ddots & \vdots & \frac{\partial}{\partial s_1} \frac{s^{\alpha_2}}{\alpha_2!} & \cdots & \frac{\partial}{\partial s_d} \frac{s^{\alpha_2}}{\alpha_2!}  \\ \vdots & \ddots & \ddots & 0 & \vdots & \ddots & \vdots  \\ 0 & \cdots & 0 & 1 & \frac{\partial}{\partial s_1} \frac{s^{\alpha_N}}{\alpha_N!} & \cdots & \frac{\partial}{\partial s_d} \frac{s^{\alpha_N}}{\alpha_N!}   \end{bmatrix}. \]
By an elimination procedure of the same sort used in the previous example, there exist upper triangular matrices $B_1(t)$ and $B_2(t)$ whose diagonal entries are all $1$ and whose non-diagonal entries are simply monomials in $t$ (with various constant coefficients) such that
\[ M_1(s) B_1(t)  = \begin{bmatrix} I & 0 & \cdots & 0 & \left(\frac{s^{\alpha_1}}{\alpha_1!} - \frac{t^{\alpha_1}}{\alpha_1!} \right) I \\ 0 & I & \ddots & \vdots & \vdots \\ \vdots & \ddots & \ddots & 0 & \vdots \\ 0 & \cdots & 0 & I & \left( \frac{s^{\alpha_N}}{\alpha_N!} - \frac{t^{\alpha_N}}{\alpha_N!} \right) I \end{bmatrix} \]
and such that $M_2(s) B_2(t)$ consists of an $N \times N$ identity block on the left (where $|\mathcal A| = N$) followed by an $N \times d$ block on the right whose $(i,\ell)$ entry is $(-1)^{|\alpha|-1} \left[ \frac{\partial}{\partial s_\ell} \frac{s^{\alpha_i}}{\alpha_i!} - \frac{\partial}{\partial t_\ell} \frac{t^{\alpha_i}}{\alpha_i!} \right]$.

Now for any homogeneous polynomial $q$ of degree $m > 0$, Taylor's Theorem guarantees that
\begin{equation} \sum_{|\beta| \leq m} \frac{(-s)^{\beta}}{\beta!} \left[ \partial^{\beta} q (s) - \partial^\beta q(t) \right] = q(s-s) - q(t-s) = - q(t-s). \label{taylor} \end{equation}
Moreover, since $\partial^\beta q(s) - \partial^\beta q(t)$ is identically zero when $|\beta| = m$ (as it is a difference of two equal constants), one may omit terms $\beta$ with $|\beta| = m$ and sum only over those $\beta$ with $|\beta| < m$. For a balanced set $\mathcal{A}$ of type 1, every $\alpha \in \mathcal{A}$ has the property that any nonzero $\alpha'$ which is entrywise less than or equal to $\alpha$ belongs to $\mathcal{A}$. This means that when $\alpha \in \mathcal{A}$ and $\partial^{\beta} (s^{\alpha} /\alpha!)$ is not constant, $\alpha - \beta$ belongs to $\mathcal{A}$ as well. For a balanced set $\mathcal{A}$ of type $2$, this means that if $\alpha \in \mathcal{A}$ and $\partial^{\beta} \left[ \nabla (s^{\alpha}/\alpha!)\right]$ is not constant (where $\nabla$ is the standard gradient), then $\alpha - \beta \in \mathcal{A}$. In both cases, this means that when one ignores the initial identity block of $M_1(s) B_1(t)$ and $M_2(s) B_2(t)$, every row of the remaining block has the form $r(s) - r(t)$ (where $r(u) := u^\alpha / \alpha!$ for $M_1(s) B_1(t)$ and $r(u) = (-1)^{|\alpha|-1} \nabla_u (u^\alpha/\alpha!)$ for $M_2(s) B_2(t)$) and furthermore the block has another row which equals $\partial^{\beta} r(s) - \partial^{\beta} r(t)$ (up to a factor of $\pm 1$) provided that $\partial^{\beta} r(s) - \partial^{\beta} r(t)$ isn't itself identically zero. Thus the identity \eqref{taylor} applied to the rows of $M_1(s) B_1(t)$ and $M_2(s) B_2(t)$ implies that there must be lower triangular matrices $A_1(s)$ and $A_2(s)$ whose diagonal entries are all ones and whose non-diagonal entries are all monomials in $s$ such that
\begin{itemize}
\item The matrix $A_1(s) M_1(s) B_1(t)$ consists of an $N k \times N k$ block on the left which equals $A_1(s)$ and an $Nk \times Nk$ block on the right which equals 
\[ \begin{bmatrix} - \frac{(t-s)^{\alpha_1}}{\alpha_1!} I_{k\times k} \\ \vdots \\ - \frac{(t-s)^{\alpha_N}}{\alpha_N!} I_{k \times k} \end{bmatrix}, \]
\item The matrix $A_2(s) M_2(s) B_2(t)$ consists of an $N \times N$ block on the left which equals $A_2(s)$ and an $N \times d$ block on the right whose $(\ell,i)$ entry is $-\partial_{s_i} (s-t)^{\alpha_\ell} / {\alpha_\ell}!$.
\end{itemize}
We arrive at two polynomial-valued matrices: 
\[ P_1(z) := \begin{bmatrix} - \frac{z^{\alpha}_1}{\alpha_1!} I_{k\times k} \\ \vdots \\ - \frac{z^{\alpha_N}}{\alpha_N!} I_{k \times k} \end{bmatrix} \]
and $P_2(z)$, whose entries are given by
\[ [P_2(z)]_{ij} := -\frac{\partial}{\partial z_j} \frac{z^{\alpha_i}}{\alpha_i!}. \]
For both matrices, no monomial appears more than once in any row or column and no two monomials appear together in the same entry.  If $A_1$ denotes the triples $(i,j,\alpha)$ for which $[\partial^{\alpha} P_1]_{ij}(0) \neq 0$, then $|A_1| = Nk$ and
\[ \sum_{(i,j,\alpha) \in A_1} \frac{1}{Nk} (\basis^i; \basis^j; \alpha) = \left( (Nk)^{-1} \mathbf{1}_{Nk}; k^{-1} \mathbf{1}_k; \sigma \mathbf{1}_d \right) \]
because $\mathcal A$ is balanced of type 1. For $P_2$,
\begin{align*} \sum_{i=1}^N \sum_{j=1}^d \frac{(\alpha_i)_j}{N |\alpha_i|} \left( \basis^i; \basis^j; \alpha - \basis^j\right) & = \left(N^{-1} \mathbf{1}_N; \frac{1}{N} \sum_{\alpha \in \mathcal{A}} \frac{\alpha}{|\alpha|} ; \frac{1}{N} \sum_{\alpha \in \mathcal{A}} \frac{(|\alpha|-1) \alpha}{|\alpha|} \right)\\
& = \left(N^{-1} \mathbf{1}_N; d^{-1} \mathbf{1}_d ; \sigma \mathbf{1}_d \right)
\end{align*}
because $\mathcal A$ is balanced of type 2. (Note the subtle but important point that if $\partial^{\alpha} [P_2]_{ij} = 0$ then the corresponding coefficient in the linear combination above is zero so that it does not in reality need to be included in the summation.) It follows from Lemma \ref{standardform} that $||| P_1 |||_\sigma$ and $|||P_2 |||_\sigma$ are strictly positive.

To apply Theorem \ref{usefulthm}, in both cases one splits the block decomposition implicitly constructed into one useful tile $\mathcal{T}_0$, of size $Nk \times Nk$ in the first case and $N \times N$ in the second case, which has pairings of formal degree $0$ and consists of a lower triangular matrix which equals the identity on the diagonal in our default adapted factorizations and a second useful tile $\mathcal{T}_1$ consisting of the parts corresponding to $P_1$ and $P_2$ in the standard adapted factorizations. The observations that 
\begin{align*}
\frac{Nk}{Nk+k} & \left( (Nk)^{-1}; (Nk)^{-1}, 0; 0 \right)  + \frac{k}{Nk+k} \left( (Nk)^{-1}; 0, k^{-1}; \sigma \right) \\ & = \left( (Nk)^{-1}; (Nk+k)^{-1}, (Nk+k)^{-1}; \frac{\sigma}{N+1} \right), \\
\frac{N}{N+d} & \left( N^{-1}; N^{-1}, 0; 0 \right)  + \frac{d}{N+d} \left( N^{-1}; 0, d^{-1}; \sigma\right) \\
& = \left( N^{-1}; (N+d)^{-1}, (N+d)^{-1}; \frac{\sigma d}{N+d} \right),
\end{align*}
allow one to apply Theorem \ref{usefulthm} to conclude that
\[ \int_{\R^d} \frac{dt}{[ ||u_1(t)||_{\omega}]^{(N+1)/(N k \sigma)}} \leq C \text{ and } \int_{\R^d} \frac{dt}{ [ ||u_2(t)||_\omega]^{(N+d)/(N \sigma d)}} \leq C \]
uniformly in $\omega$. The testing condition of \cite{testingcond} implies the bounds presented in Theorem \ref{radonexample} (after a variation of Proposition 1 from \cite{gressmancurvature} establishes that $dt$ agrees with the coarea measure in both cases).

It is worth noting that the family of translation-invariant examples $T_2$ includes the moment curve, for which restricted weak-type bounds in all dimensions were first established by Christ \cite{christ1998}. Later work of Stovall \cite{stovall2010}, building on results of Dendrinos and Wright \cite{dw2010}, achieved endpoint $L^p$-improving inequalities as a byproduct of a more general theorem concerning polynomial curves equipped with affine arc length measure. Recent work of Christ, Dendrinos, Stovall, and Street \cite{cdss2020} pushes these ideas even further.

Using these same techniques, it is also possible to establish $L^p$-improving properties for the overdetermined Radon-like transform
\[ T_3 f(x_0,\{x_\alpha\}_{\alpha\in \mathcal{A}}) := \int_{\R^d} f \left(x_0 + \sum_{\alpha \in \mathcal{A}} x_\alpha \frac{y^\alpha}{\alpha!}, y \right) dy \]
where $x_0,x_\alpha \in \R^k$ for sets of monomials $\mathcal{A}$ that are balanced of type 1. Operators of this sort previously appeared in \cite{gressman2007}, which used a tensor power technique due to Carbery \cite{carbery2004} to achieve Lebesgue space endpoint results.

\subsection{A degenerate example} 
\label{degensec}

Consider the $4$-form $u(s)$ on $\R^8$ for $s := (s_1,s_2,s_3) \in \R^3$ which equals the wedge product of the rows of the following matrix:
\[ M(s) := \begin{bmatrix} 1 & 0 & 0 & 0 & 0 & s_2 & s_3 & s_1 + \frac{1}{2} (s_2^2+s_3^2) \\ 0 & 1 & 0 & 0 & s_2 & s_1 &0 & s_1 s_2 \\ 0 & 0 & 1 & 0 & s_1 & s_3 & s_2 & s_2 s_3 + \frac{1}{2} s_1^2 \\ 0 & 0 & 0 & 1 & s_3 & 0 & s_1 & s_1 s_3\end{bmatrix}. \]
Observe that each column has the property that its partial derivatives with respect to any one of $s_1,s_2,s_3$ are expressible as constant linear combinations of columns appearing to the left of it. It follows by Taylor's Theorem that
\[ M(s) \begin{bmatrix}
1 & 0 & 0 & 0 & 0 & -t_2 & -t_3 & -t_1 + \frac{1}{2}(t_2^2+t_3^2)\\
0 & 1 & 0 & 0 & -t_2 & -t_1 & 0 & t_1 t_2\\
0 & 0 & 1 & 0 & -t_1 & -t_3 & -t_2 & t_2 t_3 + \frac{1}{2} t_1^2\\
0 & 0 & 0 & 1 & -t_3 & 0 & -t_1 & t_1 t_3\\
0 & 0 & 0 & 0 & 1 & 0 & 0 & -t_1\\
0 & 0 &0 & 0 & 0 & 1 & 0 & -t_2\\
0 & 0& 0 & 0 & 0 & 0 & 1 & -t_3\\
0 & 0 & 0 &0 & 0 & 0 & 0 & 1
\end{bmatrix} = M(s-t) \]
for each $s, t \in \R^3$. Letting $U_0(s)$ be the span of the rows of $M(s)$ in the usual way, one therefore obtains a block decomposition of $(U_0(s),\R^8)$ by letting $U_i(s)$ be the span of rows $i+1,\ldots,4$ for $i=0,\ldots,3$ and letting $V_j(t)$ be the span of columns $j+1,\ldots,8$ for $j=0,\ldots,7$. The formal degree $D_{ij}$ of a row and column pair can be defined to equal the degree of the lowest-order nonzero term in the polynomial $M_{ij}(s)$ when one exists and then handling the trivially zero entries by defining $D_{ij} = 0$ when $j \leq 4$ and $D_{ij} = 1$ when $j \in \{5,6,7\}$. Now let 
\[ P(z) := \begin{bmatrix} 
1 & 0 & 0 & 0 & 0 & -z_2 & -z_3 & -z_1  \\ 0 & 1 & 0 & 0 & -z_2 & -z_1 &0 & z_1 z_2 \\ 0 & 0 & 1 & 0 & -z_1 & -z_3 & -z_2 & z_2 z_3 + \frac{1}{2} z_1^2 \\ 0 & 0 & 0 & 1 & -z_3 & 0 & -z_1 & z_1 z_3\end{bmatrix}. \]
This $P(z)$ agrees with $M(-z)$ in all places except the last entry of the first row, where one sees that the second-order terms of $M$ for that entry have been omitted in $P$. When Theorem \ref{usefulthm} is applied to this $u$ using the block decomposition just identified, the right-hand side of \eqref{nondegenhyp} is bounded below by $|||P|||_{\sigma}$ because $[0,3] \times [0,7]$ is a useful tile.
For appropriate values of $\sigma$, one can use Lemma \ref{standardform} to show that $|||P|||_{\sigma} > 0$. This $P$ clearly satisfies the required structural properties: no monomial appears more than once in any row or column, and the monomials $z_2 z_3$ and $\frac{1}{2} z_1^2$ have multiindices $(0,1,1)$ and $(2,0,0)$ such that $(0,1,1) + \basis^{k'} \neq (2,0,0) + \basis^{k}$ for any $k,k'$. To find points of the form \eqref{convex} in the convex hull of the vectors $(\basis^i;\basis^j;\alpha)$ for $\partial^{\alpha}P_{ij}(0) \neq 0$, it is helpful to first define four auxiliary points:
\begin{align*}
I_1 := \frac{1}{4} & \sum_{i=1}^4 (\basis^i;\basis^i;0,0,0) = \left( \frac{1}{4} \mathbf{1}_4; \frac{1}{4}, \frac{1}{4}, \frac{1}{4}, \frac{1}{4},0,0,0,0; 0,0,0 \right), \\
I_2 := \frac{1}{8} & \Big((\basis^1;\basis^6;0,1,0) + (\basis^1;\basis^7;0,0,1) + (\basis^2;\basis^5;0,1,0) \\
 & + (\basis^2;\basis^6;1,0,0) + (\basis^4;\basis^5;0,0,1) + (\basis^4;\basis^7;1,0,0) \Big) \\ & + \frac{1}{4} (\basis^3;\basis^8;2,0,0) = \left(\frac{1}{4} \mathbf{1}_4; 0,0,0,0, \frac{1}{4}, \frac{1}{4}, \frac{1}{4}, \frac{1}{4}; \frac{3}{4}, \frac{1}{4}, \frac{1}{4} \right), \\
 I_3 := \frac{1}{8} & \Big((\basis^1;\basis^6;0,1,0) + (\basis^1;\basis^7;0,0,1) + (\basis^2;\basis^5;0,1,0) \\
 & + (\basis^2;\basis^6;1,0,0) + (\basis^4;\basis^5;0,0,1) + (\basis^4;\basis^7;1,0,0) \Big) \\ & + \frac{1}{4} (\basis^3;\basis^8;0,1,1) = \left(\frac{1}{4} \mathbf{1}_4; 0,0,0,0, \frac{1}{4}, \frac{1}{4}, \frac{1}{4}, \frac{1}{4}; \frac{1}{4}, \frac{1}{2}, \frac{1}{2} \right), \\
 I_4 :=  \frac{1}{4} & \Big( (\basis^1;\basis^8;1,0,0)  + (\basis^2;\basis^5;0,1,0) + (\basis^3;\basis^6,0,0,1) + (\basis^4;\basis^7;1,0,0) \Big) \\ & = \left( \frac{1}{4} \mathbf{1}_4; 0,0,0,0, \frac{1}{4}, \frac{1}{4}, \frac{1}{4}, \frac{1}{4}; \frac{1}{2}, \frac{1}{4}, \frac{1}{4} \right).
\end{align*}
Now
\[ \frac{1}{2} I_1 + \frac{1}{6} I_2 + \frac{1}{3} I_3 = \left( \frac{1}{4} \mathbf{1}_4; \frac{1}{8} \mathbf{1}_8; \frac{5}{24} \mathbf{1}_{3} \right) \]
and
\[ \frac{1}{2} I_1 + \frac{1}{4} I_3 + \frac{1}{4} I_4 =  \left( \frac{1}{4} \mathbf{1}_4; \frac{1}{8} \mathbf{1}_8; \frac{3}{16} \mathbf{1}_{3} \right), \]
so by \eqref{convex} it must be the case that $|||P|||_{\sigma} > 0$ for all $\sigma \in [\frac{3}{16},\frac{5}{24}]$.
As a consequence, for each $\tau \in [\frac{6}{5},\frac{4}{3}]$, there is a finite constant $C_\tau$ such that
\[ \int_{\R^3} \frac{dt}{\left[ ||u(t)||_\omega \right]^{\tau}} \leq C_\tau \]
uniformly over all determinant one choices of $\omega := \{\omega^j\}_{j=1}^8$.  There are two reasons why this result is noteworthy: the first is that there is a nontrivial range of $\tau$ for which uniform bounds hold despite the fact that the result is global rather than local. The second is that the degree one part of $P$ is degenerate, and so Theorem \ref{usefulthm} cannot be used to break $P$ into smaller homogeneous pieces as was done for the first concrete example. To verify degeneracy of the matrix
\[ \begin{bmatrix} 0  & -z_2 & -z_3 & -z_1 \\ -z_2 & -z_1 & 0 & 0 \\ -z_1 & -z_3 & -z_2 & 0 \\ -z_3 & 0 & -z_1 & 0 \end{bmatrix}, \]
one can scale $z_1$ by $\epsilon^4$ and $z_2, z_3$ by $\epsilon^{-2}$ and then scale the rows by factors $\epsilon^3, \epsilon^{-3},\epsilon^{3},\epsilon^{-3}$, respectively, and the columns by factors $\epsilon^6,1,1,\epsilon^{-6}$. The result is
\[ \begin{bmatrix} 0  & - \epsilon z_2 & - \epsilon z_3 & -\epsilon  z_1 \\ -\epsilon z_2 & -\epsilon z_1 & 0 & 0 \\ - \epsilon^{13} z_1 & - \epsilon z_3 & - \epsilon z_2 & 0 \\ - \epsilon z_3 & 0 & - \epsilon z_1 & 0 \end{bmatrix} \]
which clearly goes to zero as $\epsilon \rightarrow \infty$. As the scalings of rows, columns, and variables each preserves volume, it must be the case that this tile $\mathcal{T}$ has $|||P_{\mathcal T,t}|||_{\sigma} = 0$ for all $\sigma$ by the Hilbert-Mumford Criterion.

\section{Model Operators Revisited} 
\label{modelsec} 

This section is devoted to the proof of Theorem \ref{characterize}, which (modulo regularity assumptions) characterizes the so-called ``model operators'' exhibiting best-possible $L^p$-improving properties depending only on the first- and second-order properties of the mappings $\pi_1$ and $\pi_2$. In this sense, they represent those Radon-like transforms which are governed by curvature-like properties rather than higher-order torsion-like quantities. Theorem \ref{characterize} is the second such characterization of these operators, the first one being found in \cite{gressmancurvature}. Because the characterization in \cite{gressmancurvature} is also local and algebraic in nature, it should in principle be possible to deduce Theorem \ref{characterize} directly from the original result; however, the complexity of the original characterizing condition is quite forbidding, and in this case it appears that the simplest proof is to independently establish boundedness of \eqref{thebnd} using Theorem \ref{bigtheorem1} and the general characterization of nonlinear Radon-Brascamp-Lieb functionals established in \cite{testingcond}. The equivalence of the two characterizing criteria follows as a corollary of the $L^p$-improving estimates established rather than vice-versa.

The proof begins with the proposition below, which establishes that every pair $(U_0,V)$ admits a particularly simple block decomposition.
\begin{proposition}
Suppose that $u(t) := u^1(t) \wedge \cdots \wedge u^p(t) \in \Lambda^p(V^*)$ is nonvanishing and is Nash of complexity at most $K$. Let $U_0(t)$ be the family of $p$-dimensional subspaces parametrized by $(\domain,u)$. Then $(U_0,V)$ always admits a block decomposition with $\mstar = 0$ and $m = 1$ with $D_{00} = 0$ and $D_{01} = 1$. This decomposition is Nash with complexity at most $K^N$ for some constant $N$ depending only on dimensions. If $\dim V = q$, then $\dim V_1 = q-p$. \label{blockprop2}
\end{proposition}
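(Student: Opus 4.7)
The plan is to take $V_1(t)$ to be the annihilator of $U_0(t)$ inside $V$ and to exhibit a global Nash parametrization for it using the Hodge-type duality induced by the determinant functional. After fixing once and for all a basis of $V$ (with dual basis of $V^*$) in which the determinant functional is the standard volume form, one has a canonical linear isomorphism $\Phi : \Lambda^p(V^*) \to \Lambda^{q-p}(V)$ whose matrix in the standard bases has entries in $\{-1, 0, +1\}$. I would set $v_1(t) := \Phi(u(t))$. Standard multilinear algebra then gives that $v_1(t)$ is a nonvanishing decomposable $(q-p)$-form, and that the $(q-p)$-dimensional subspace it parametrizes is precisely $V_1(t) := \bigcap_{i=1}^{p} \ker u^i(t)$, yielding $\dim V_1(t) = q-p$ as claimed.

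With $V_1(t)$ so defined, the two interior-product conditions in the definition of a block decomposition must be checked. The condition for $D_{00} = 0$ is vacuous, and for $D_{01} = 1$ it reduces (as $|\gamma| < 1$ forces $\gamma = 0$) to verifying that $\overline{u} \cdot v_1(t) = 0$ and $u(t) \cdot \overline{v} = 0$ whenever $\overline{u} \in U_0(t)$ and $\overline{v} \in V_1(t)$. Both identities follow immediately from the interior product expansion \eqref{interiorprod} and its analogue with the roles of $V$ and $V^*$ swapped: the first because every $\overline{u} \in U_0(t)$ annihilates every vector in $V_1(t)$ by definition, the second because $u^i(t) \cdot \overline{v} = 0$ for all $i$ whenever $\overline{v} \in V_1(t)$.

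For the quantitative Nash assertion, since $\Phi$ is linear with constant $\pm 1$ entries, each coefficient of $v_1(t)$ in a fixed basis of $\Lambda^{q-p}(V)$ is a signed sum of at most $\binom{q}{p}$ coefficients of $u(t)$. The sum rule for Nash functions recalled in Section \ref{nash} then yields a Nash complexity bound of $K^N$ for some $N$ depending only on $p$ and $q$, and the single global parametrization $(\Omega, v_1)$ satisfies the definition of a Nash smooth family of subspaces. There is no serious obstacle here; the argument is short and essentially reduces to the multilinear algebra fact that the Hodge-type dual of a nonvanishing decomposable $p$-form is a nonvanishing decomposable $(q-p)$-form parametrizing the annihilator, which one can verify directly by computing in a basis of $V^*$ extending $u^1(t), \ldots, u^p(t)$.
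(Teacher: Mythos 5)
Your proof is correct, but it takes a genuinely different and in fact cleaner route than the paper. The paper's proof of Proposition~\ref{blockprop2} proceeds by applying the general kernel-parametrization machinery of Proposition~\ref{kernelparam}: it introduces the interior-multiplication map $M(t): \omega \mapsto u(t) \cdot \omega \in \Lambda^{p-1}(V^*)$, observes that its kernel is exactly $\bigcap_i \ker u^i(t)$ of dimension $q-p$, and then invokes Proposition~\ref{kernelparam} to produce local parametrizations of the kernel on boundedly many open subsets of $\domain$ via Cramer's rule; the Nash complexity bound $K^N$ comes from the ratio-of-minors formulas there, and the number of admissible charts is what motivates allowing $N_{q,r} > 1$ in the definition of a Nash family of subspaces. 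Your construction instead uses the contraction with the fixed volume element dual to the determinant functional, giving a single \emph{global} parametrization $v_1(t) := u(t) \lrcorner \mu$; since this Hodge-type map is a constant signed permutation of coefficients in the standard bases (your estimate of at most $\binom{q}{p}$ terms is conservative — each coefficient of $v_1$ is $\pm$ one coefficient of $u$), the Nash complexity of $v_1$ is actually $K$ itself, not merely $K^N$. Both approaches then verify the $D_{01}=1$ vanishing via the interior-product expansion \eqref{interiorprod} in essentially the same way. What the paper's route buys is uniformity: it reuses Proposition~\ref{kernelparam}, which is needed elsewhere (in Corollaries~\ref{bndcount} and \ref{containercorollary}) and handles situations where no global decomposable parametrization of a kernel exists. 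What your route buys is a simpler standalone argument and a tighter complexity bound, at the modest cost of invoking the determinant functional to set up the duality (which the paper's proof does not use).
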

\begin{proof}
 Let $e^1,\ldots,e^q$ be any basis of $V$.  For each $t$, consider the map $M(t)$ which sends $\omega \in V$ to $u(t) \cdot \omega \in \Lambda^{p-1}(V^*)$, where $\cdot$ is interior multiplication. By virtue of the formula \eqref{interiorprod} and the fact that the forms $u^1 \wedge \cdots \wedge \widehat{u^i} \wedge \cdots \wedge u^p$ are linearly independent, $u(t) \cdot \omega = 0$ if any only if $u^i(t) \cdot \omega = 0$ for each $i = 1,\ldots,p$. Because $u$ is nonvanishing, we know that the kernel of $M$ must have dimension $q - p$ at every point $t$ and in particular $M(t)$ must have rank $p$ at all points. Proposition \ref{kernelparam} thus guarantees that there always exist $V =: V_0(t) \supset V_1(t)$ with $\dim V_1 = q-p$ such that $u(t) \cdot \overline{v} = 0$ for all $\overline{v} \in V_1(t)$. Moreover, since we have established the stronger property that $u^i(t) \cdot \overline{v} = 0$, it follows that any $\overline{u} \in U_0(t)$ and any $v(t)$ parametrizing $V_1(t)$ satisfy $\overline{u} \cdot v(t) = 0$ as well.
 \end{proof}

In the block decomposition constructed in Proposition \ref{blockprop2}, the tiles $\mathcal{T}_0 := [0,0] \times [0,0]$ and $\mathcal{T}_1 := [0,0] \times [1,1]$ are useful and by the construction of Theorem \ref{usefulthm} admit points
\begin{align*}
[0]_{\mathcal{T}_0} & := \left( \frac{1}{p}; \frac{1}{p}, 0; 0 \right) \\
[d^{-1}]_{\mathcal{T}_1} & := \left( \frac{1}{p}; 0, \frac{1}{q-p}; \frac{1}{d} \right)
\end{align*}
which contain $(\frac{1}{p}; \frac{1}{q}, \frac{1}{q}; \frac{q-p}{d q})$ in their convex hull for coefficients $\theta_0 := \frac{p}{q}$ and $\theta_1 := \frac{q-p}{q}$.
Therefore \eqref{haveproved} implies 
\begin{equation} \int_{\domain} \frac{|||P_{\mathcal{T}_0,t}|||_0^{\frac{d p}{q-p}}  |||P_{\mathcal{T}_1,t}|||_{1/d}^d dt}{\left[ ||u(t)||_\omega \right]^{\frac{dq}{(q-p)p}}} \leq C K^C \label{curvaturecase} \end{equation}
for some constants $C$ depending only on dimension when $u$ is Nash of complexity at most $K$. Here (as in Theorem \ref{usefulthm}), the objects $P_{\mathcal{T}_0,t}$ and $P_{\mathcal{T}_1,t}$ may be initially defined using any factorization $u^1 \wedge \cdots \wedge u^p$ of $u$ (since all factorizations with $u^1 \wedge \cdots \wedge u^p = u$ will be adapted) and any adapted factorization $v^{j,j'}$ for $j \in \{0,1\}, j' \in \{1,\ldots,q_j\}$ and $(q_0,q_1) = (p,q-p)$ such that $\bigwedge_{j=0}^1 \bigwedge_{j'=1}^{q_j} v^{j,j'} = e^1 \wedge \cdots \wedge e^q$ for any volume-normalized basis $\{e^j\}$ of $V$ and such that $\{v^{1,j'}(t)\}_{j'=1}^{q-p}$ is a basis of $V_1(t)$. The goal of the present section will be to establish that the numerator in the integrand of \eqref{curvaturecase} is nonvanishing when the formula is applied to the integrals governing the bound \eqref{thebnd} of the Radon-like transform given by \eqref{doublefibration} when the canonical map $Q_z$ of the form \eqref{qdescribed} given by the Lie bracket \eqref{theqmap} is semistable.

\subsection{Sufficiency of semistability}
\label{suffsec}
This section establishes that \eqref{thebnd} holds for a suitable $a$ and constant $C$ when the map $Q_{z_0}$ given by \eqref{theqmap} is semistable in the GIT sense and the kernels $\ker d \pi_1$ and $\ker d \pi_2$ do not intersect. We will in fact show something slightly stronger, which is that the form of $a$ and the constant $C$ can be taken so that the arguments of \cite{gressmancurvature} also imply near-optimal $L^p$-improving inequalities when the manifold $\mathcal M$ and projections $\pi_1,\pi_2$ are merely smooth and lack Nash regularity.

Recall the setup: $\mathcal M$ is some manifold of dimension $n_1 + n - k$ equipped with projections $\pi_1 : \mathcal{M} \rightarrow \R^{n_1}$ and $\pi_2 : \mathcal{M} \rightarrow \R^{n}$, both of which have surjective differentials at each point $z \in \mathcal{M}$ and transverse kernels (i.e., $\ker d \pi_1 \cap \ker d \pi_2 = \{0\}$). As in the introduction, this allows (after coordinate transformations) one to assume that $\mathcal M$ is parametrized by coordinates $(x,t)$ belonging to some neighborhood of $z_0 = (x_0,t_0) \in \R^n \times \R^{n_1 -k}$. In these coordinates, $\pi_2 (x,t) = x$ and $\pi_1(x,t) = (t,\phi(x,t))$ for some smooth $\phi$.
For each $i = 1,\ldots,n_1-k$, one can define $Y^i := \partial_{t_i}$; these $Y^{1},\ldots,Y^{n_1-k}$ smoothly span the kernel of $d \pi_2$. Similarly, if $X^1,\ldots,X^{n-k}$ are smooth vector fields on $\mathcal M$ which form a basis of $\ker d \pi_1$ at all $z$ near $z_0$, then clearly each $X^i$ must belong to the span of the coordinate vector fields $\partial_{x_1},\ldots,\partial_{x_m}$.

Given $\phi$, one can build a defining function $\pi(x,y)$ for the Radon-like transform \eqref{doublefibration} by letting $y = (y',y'')$ for $y' \in \R^{n_1-k}$ and $y'' \in \R^{k}$ and fixing $\pi(x,y) := -y'' + \phi(x,y')$. If $x \mapsto \phi(x,t)$ is Nash of complexity at most $K$ for each $t$, then Proposition 1 of \cite{gressmancurvature} and the main theorem of \cite{testingcond} imply that
\begin{equation} \begin{split} \left| \int g(x) f(t,\phi(x,t)) a(x,t) dx dt \right| & \\ \leq C K^N \sup_{x,\omega} & \left[ \int \frac{|a(x,t)|^{r'_1} dt}{|| u(x,t)||_{\omega}^{r_1'-1}} \right]^{1/r_1'}||g||_{L^{r_2}}  ||f||_{L^{r_1}} \end{split} \label{applytesting} \end{equation}
for some constants $C$ and $N$ depending only on dimensions and codimensions. Here $u = u^1 \wedge \cdots \wedge u^k$, where \begin{equation} u^i (x,t) := \left( \frac{\partial \phi^i}{\partial x_1}(x,t),\ldots, \frac{\partial \phi^i}{\partial x_n}(x,t) \right), \ i=1,\ldots,k. \label{udefined} \end{equation}
The exponent $r_1$ equals $1 + \frac{k(n-k)}{n(n_1-k)}$ and $r_2 = \frac{k(n_1-k)}{n_1 (n-k)} + 1$. As usual, $\omega$ ranges over all volume-normalized bases of $\R^n$.  

Because $d \pi_1$ has surjective differential, the Jacobian matrix $\frac{\partial \phi}{\partial x}(x,t)$ must have full rank, which means that $u(x,t)$ is nonvanishing. Applying Proposition \ref{blockprop2} to this $u(x,t)$, the resulting spaces $V_1(x,t)$ are spanned exactly those vectors which are zero when paired with any $u^i(x,t)$, which means that $V_1(x,t)$ can be identified with the span of $\{ X^i \}_{i=1}^{n-k}$ at $(x,t)$.  More precisely, as Proposition \ref{blockprop2} yields smooth vectors $v^{1,j'}(x,t)$ (and, in fact, Nash) spanning $V_1$ at each $(x,t)$, it may be assumed without loss of generality that the $X^{j'}$ have been chosen so that
\[ X^{j'} = \sum_{\ell} (v^{1,j'}(x,t))_\ell \partial_{x_\ell} \]
for each $j'=1,\ldots,n-k$. Now because $u^i(x,s) \cdot v^{1,j'}(x,t)$ vanishes identically on the diagonal $s=t$, 
\begin{align*} \left[ \partial_{t_\ell} u^i(x,s) \cdot v^{1,j'}(x,t) \right] \Big|_{s = t} & = - \left[ \partial_{s_\ell} u^i(x,s) \cdot v^{1,j'}(x,t) \right] \Big|_{s=t} \\
& = -X^{j'} \left[ \partial_{t_\ell} \phi^i \right] \Big|_{(x,t)} = - X^{j'} Y^{\ell} \phi^i \Big|_{(x,t)}. \end{align*}
But it is also true that $X^{j'} \phi$ is identically zero, so it follows that $Y^{\ell} X^{j'} \phi$ is identically zero and that
\begin{equation} \left[ \partial_{t_\ell} u^i(x,s) \cdot v^{1,j'}(x,t) \right] \Big|_{s = t} = [Y^{\ell},X^{j'}] \phi^i \Big|_{(x,t)}, \label{commutator} \end{equation}
where $[\cdot, \cdot]$ denotes the Lie bracket. One therefore has by the definition \eqref{gitdef} that when $\mathcal{T}_1$ is the useful tile $[0,0] \times [1,1]$ 
\[ P_{\mathcal{T}_1,(x,t)} (w_1,w_2,w_3) := \sum_{i,j',\ell} (w_1)_i (w_2)_{j'} (w_3)_\ell [Y^{\ell},X^{j'}] \phi^i \Big|_{(x,t)}.\]
Because $[Y^\ell,X^{j'}]$ is known to belong to the span of $\{\partial_{x_j}\}_{j=1}^n$, one must have that
\[ [Y^\ell, X^{j'}] \phi^i \Big|_{(x,t)} = \left[ [Y^{\ell}, X^{j'}] \phi^i( \pi_2(x,t), s) \right] \Big|_{s = t} \]
for any $\ell$ and $j'$ at any point $(x,t)$. The map
\[ \Phi_{x,t} (Z) := \left[ Z \phi(\pi_2(x,t),s) \right] \Big|_{s=t} \]
is well-defined for any vector $Z$ tangent to $\mathcal M$ at $(x,t)$. It annihilates vectors in the kernel of $d \pi_2$ (namely, vectors in the $t$-directions) and vectors in the kernel of $d \pi_1$ (namely, vectors in the $x$ directions which annihilate $\phi(x,t)$). Thus one can regard $\Phi_{x,t}$ as a linear map from $T_{(x,t)}(\mathcal M) / (\ker d \pi_1 + \ker d \pi_2)$ into $\R^k$. Because $d \pi_1$ has surjective differential, this map $\Phi_{x,t}$ must be an isomorphism. Therefore
\[ P_{\mathcal{T}_1,(x,t)} (w_1,w_2,w_3) = - w_1 \cdot \Phi_{(x,t)} \left( Q_{(x,t)} \left( \sum_{j'} (w_2)_{j'} X^{j'} , \sum_{\ell} (w_3 )_\ell Y^{\ell} \right) \right), \]
where $Q_{(x,t)}$ is exactly the canonical bilinear map \eqref{qdescribed}. If $Q_{(x,t)}$ is semisimple, then it must follow that $|||P_{\mathcal{T}_1,(x,t)}|||_{1/(n_1-k)}$ is strictly positive, since $P_{\mathcal{T}_1,(x,t)}$ differs from $Q_{(x,t)}$ by the action of invertible linear transformations in each factor and because
\[ |\det M|^{-1/(n_1-k)} P_{\mathcal{T}_1,(x_0,t_0)} (w_1,w_2,M^t w_3) \]
is homogeneous of degree $0$ in $M$ (which means that the infimum over all $M \in \operatorname{SL}(n_1-k,\R)$ will equal the infimum over $\operatorname{GL}(n_1-k,\R)$).

As for the useful tile $\mathcal{T}_0$, one has
\[ P_{\mathcal{T}_0,(x,t)} (w_1,w_2,w_3) := \sum_{i,j} (w_1)_i (w_2)_{j} \left[ v^{0,j}(x,t) \cdot \nabla_x \right] \phi^i(x,t). \]
(Note in particular that this expression is homogeneous of degree $0$ as a function of $w_3$.) 
The quantity $|||P_{\mathcal{T}_0,(x,t)}|||_0^k$ is bounded below by the absolute value of the determinant of the form $P_{\mathcal{T}_0,(x,t)} (\cdot,\cdot)$ times some constant depending only on $k$ by virtue of \eqref{ftgit}. This determinant can't be zero, because if it were, it would force the wedge product $u(x,t)$ to vanish. It follows that when $Q_{(x,t)}$ is semistable, the numerator of the integrand in \eqref{curvaturecase} is nonzero at $(x,t)$. Here $d := n_1 - k$, $p := k$, and $q = n$. This numerology gives the identity $dq/((q-p) p) = r_1' - 1$. As long as the bump function $a$ is chosen with small support so that $|a(x,t)|^{r'_1}$ is dominated by the numerator in \eqref{curvaturecase}, the right-hand side of \eqref{applytesting} will be bounded above by $C K^{N'} ||g||_{L^{r_2}} ||f||_{L^{r_1}}$ for constants $C$ and $N'$ which depend only on the parameters ($k, n,n_1$). Because the numerator in \eqref{curvaturecase} depends continuously on the degree $2$ Taylor jets of $\phi$ as $(x,t)$ varies, one can conclude that the constant $C$ in the main inequality, i.e., \eqref{thebnd} in Theorem \ref{characterize}, can be taken to be some uniformly bounded constant times $K^{N'}$ provided the support of $a$ is sufficiently small and $a$ is uniformly bounded. The constant will also remain bounded if $\phi$ is altered by a sufficiently small $C^3$ perturbation.

\subsection{Necessity of semistability}
\label{neccsec}

The argument in this section has two closely-related parts. The first is to establish that transversality of $\ker d \pi_1$ and $\ker d \pi_2$ is a necessary consequence of \eqref{thebnd}. As a side note, it is natural to ask whether transversality is itself a consequence of the semistability of $Q_{z_0}$. The following simple example shows that this is not the case: for the manifold $\mathcal M := \R^{5}$ and with $n := n_1 := 3, k := 2$, let
\[ \pi_1(x) := ( x_1 x_3 - x_4, x_3, x_5) \text{ and } \pi_2(x) := (x_1 x_2 - x_5, x_2, x_4). \]
The kernel of $d \pi_1$ is spanned by
\[ X^1 := \frac{\partial}{\partial x_2} \text{ and } X^2 := \frac{\partial}{\partial x_1} + x_3 \frac{\partial}{\partial x_4}, \]
while $\ker d \pi_2$ is spanned by
\[ Y^1 := \frac{\partial}{\partial x_3} \text{ and } Y^2 := \frac{\partial}{\partial x_1} + x_2 \frac{\partial}{\partial x_5}. \]
Generically the kernels $\ker d \pi_1$ and $\ker d \pi_2$ are transverse, but both contain $\frac{\partial}{\partial x_1}$ at $x = 0$. The map $Q_z$ is well-defined at $0$, but it maps into a two-dimensional space at the origin (while at a generic point its image space is only one dimensional). Despite the anomalous dimensionality, $Q_z$ is indeed semistable at the origin. However, because the dimensionality changes discontinuously, knowing $Q_z$'s behavior at just the origin is inadequate to understand $Q_z$ at other nearby points (and, as it happens, there are points in every neighborhood of the origin at which $Q_z$ is not semistable).

Returning now to the general case, because $\pi_2(z)$ is assumed to have surjective differential, it is possible near any $z_0 \in \mathcal{M}$ to construct a local coordinate system in the variables $(x,t)$ for $x \in \R^n$ and $t \in \R^{n_1-k}$ such that $z_0$ has coordinates $(0,0)$ and $\pi_2(x,t) = x$ for all $(x,t)$ near the origin. The kernel of $d \pi_2$ is then spanned by $\{\partial_{t_\ell}\}_{\ell=1}^{n_1-k}$. Let $W$ be the linear subspace of $\R^{n+n_1-k}$ which passes through $(x,t) = (0,0)$ and has tangent space spanned by $\ker d \pi_1|_{(0,0)} + \ker d \pi_2|_{(0,0)}$. Finally, for all $\delta > 0$, let $E_\delta$ be the ellipsoid in $\R^{n+n_1-k}$ which intersects $W$ in a ball of radius $\delta$ and extends an amount $\delta^2$ in directions orthogonal to $W$ (i.e., the projection of $E_\delta$ onto the orthogonal complement of $W$ is a ball of radius $\delta^2$).

Because $d \pi_1$ is surjective, its kernel must have dimension $n-k$ at every point. Suppose that the intersection of $\ker d \pi_1$ and the span of $\{\partial_{t_\ell}\}_{\ell=1}^{n_1-k}$ has dimension $r \geq 0$. The volume of $E_\delta$ will be exactly $c \delta^{(n-k)+(n_1-k)-r} (\delta^2)^{k+r} = c\delta^{n + n_1 + r}$ for some constant $c$ depending only on dimensions. Let $X^1,\ldots,X^{n-k-r}$ be vectors in $\ker d \pi_1$ which are linearly independent modulo the span of $\{\partial_{t_\ell}\}_{\ell=1}^{n_1-k}$. This makes their projections onto the $x$ coordinate directions linearly independent, and after a linear change of variables in $x$, one may assume without loss of generality that $d \pi_2(X^j) = \partial/\partial x_j$ for each $j=1,\ldots,n-k-r$ at the point $(x,t) = (0,0)$. It follows that $\pi_2(E_\delta)$ has volume exactly $c' \delta^{n-k-r} (\delta^2)^{k+r} = c' \delta^{n+k+r}$ for some fixed constant $c'$ independent of $\delta$.

The projection $\pi_1(E_\delta)$ is not an exact ellipsoid, but as the projection of $W$ via $d \pi_1$ has dimension $n_1 - k - r$ at the origin $(x,t) = (0,0)$, Taylor's theorem guarantees that all points in $\pi_1(E_\delta)$ will be distance at most $C \delta^2$ away from a point in $d \pi_1|_{(0,0)} (E_\delta \cap W)$, which is itself a ball of diameter $C' \delta$ in a subspace of dimension $n_1 - k - r$. Thus $\pi_1(E_\delta)$ will, for all sufficiently small $\delta$, be contained in a measurable set of volume comparable to $C\delta^{n_1 + k + r}$. Letting $f_\delta$ be the characteristic function of this measurable set and letting $g_\delta$ be the characteristic function of the projection $\pi_2(E_\delta)$, it follows when $a(0,0) > 0$ that
\[ \int g_\delta(\pi_2(x,t)) f_\delta(\pi_1(x,t)) a(x,t) dx dt \geq C'' |E_\delta| = C''' \delta^{n + n_1 + r} \]
for constants $C''$ and $C'''$ independent of $\delta$, provided $\delta$ is sufficiently small. Boundedness of the form \eqref{thebnd} requires that
\[ C''' \delta^{n + n_1 + r} \leq C^{iv} \left( \delta^{n + k + r}\right)^{\frac{1}{p_2}} \left( \delta^{n_1 + k + r} \right)^{\frac{1}{p_1}} \] 
for all sufficiently small positive $\delta$ and some positive constants $C''',C^{iv}$, where
\begin{align*} \frac{1}{p_2} & := \frac{n_1(n-k)}{k(n_1-k) + n_1(n-k)} = \frac{n_1(n-k)}{nn_1 - k^2}, \\
 \frac{1}{p_1} & := \frac{n(n_1-k)}{k(n-k) + n(n_1-k)} = \frac{n(n_1-k)}{nn_1 - k^2}.
\end{align*}
However, it is easy to check that 
\[ n_1 + n = (n+k) \frac{1}{p_2} + (n_1+k) \frac{1}{p_1} \]
and
\[ \frac{1}{p_1} + \frac{1}{p_2} = 1 + \frac{(n-k)(n_1-k)}{nn_1 -k^2} > 1, \]
so letting $\delta \rightarrow 0^+$ forces the conclusion that $r = 0$, namely, that $\ker d \pi_1 \cap \ker d \pi_2 = \{0\}$.

Knowing now that $\ker d \pi_1 \cap \ker d \pi_2 = \{0\}$, one can use a slightly more refined version of the coordinates $(x,t)$ just constructed. More specifically, after a linear transformation of $\R^{n_1}$, i.e., replacing $\pi_1$ by $A \pi_1$ if necessary (where $A \in \operatorname{GL}(n_1,\R)$) so that $d \pi_1(\ker d \pi_2)$ lies in the first $n_1-k$ coordinate directions in $\R^{n_1}$, coordinates $(x,t)$ may be chosen so that $x = \pi_2(x,t)$ and $(t,\phi(x,t)) = \pi_1(x,t)$ for all $(x,t)$ near the origin, where $\phi$ is some mapping into $\R^k$ which has $\phi = \partial \phi /\partial t_\ell = 0$ at the origin for all $\ell=1,\ldots,n_1-k$.  To be precise, we define $x := \pi_2(z)$ and $t := (\pi_1^{1}(z),\ldots\pi_1^{n_1-k}(z))$. Here $(\pi_1^1,\ldots,\pi_1^{n_1})$ is the coordinate representation of $\pi_1$. Let $\psi(x,t)$ be the inverse of the map $z \mapsto (x,t) = (\pi_2(z), (\pi_1^1(z),\ldots,\pi_1^{n_1-k}(z)))$.
When $x$ is fixed, $\psi(x,t)$ lies on the $(n_1-k)$-dimensional submanifold $\pi_2^{-1}(\{x\})$, and having chosen the coordinate directions of $\pi_1$, it is possible to assume that $d \pi^{n_1-k+1}_1,\ldots,d \pi_1^{n_1}$ vanish when restricted to the tangent space of $\pi_2^{-1}(\{x\})$ at $z_0$. This is what guarantees that $\phi(x,t) := (\pi^{n_1-k+1}_1(\psi(x,t)),\ldots,\pi^{n_1}_1(\psi(x,t)))$ has $\phi = \partial \phi /\partial t_\ell = 0$ vanishing at the origin (since derivatives of $\psi$ in the $t$ direction are tangent to $\pi_2^{-1}(\{x\})$). (As a side note, observe that this means that every Radon-like transform can be locally parametrized in such a way that it has the same form as the operators in Theorem \ref{usefulradon}.)

Now consider the transformation $\Phi(t,y) := (t,y - \phi(0,t))$ for $t \in \R^{n_1-k}$ and $y \in \R^k$. Near the origin, this is a diffeomorphism. Moreover,
in the $(x,t)$ coordinates constructed above, $\Phi \circ \pi_1(x,t) = (t, \phi(x,t) - \phi(0,t))$, which means that $\Phi \circ \pi_1(x,t) = (t, \tilde \phi(x,t))$ for some map $\tilde \phi(x,t)$ whose $t$ derivatives of \textit{all} orders vanish when $x=0$. Thus, after making a nonlinear change of variables on $\R^{n_1}$, one may assume without loss of generality that derivatives of $\phi$ of all orders with respect to $t$ variables vanish at the origin. Lastly, since $d \pi_1$ is surjective, it may be assumed by rotation of the $x$ coordinates that $d \phi(\partial_{x_{n-k+1}}),\ldots,d \phi(\partial_{x_{n}})$ span the image of $d \phi$ and $d \phi(\partial_{x_1}) = \cdots = d \pi(\partial_{x_{n-k}}) = 0$ at the origin. As a result, in these coordinates one has vector fields
\[ X^j = \frac{\partial}{\partial x_j} + \sum_{j'=n-k+1}^{n} c_{jj'}(x,t) \frac{\partial}{\partial x_{j'}} \]
for $j=1,\ldots,n-k$ which span the kernel of $d \pi_1$ at all $(x,t)$ near $(0,0)$, where $c_{jj'}$ vanishes at the origin for each pair $(j,j')$.

Consider a point $(x,t)$ such that $x_1,\ldots,x_{n-k}$ and $t_1,\ldots,t_{n_1-k}$ are of size $\delta$ and $x_{n-k+1},\ldots,x_{n}$ are of size $\delta^2$. It must be the case that
\begin{equation} \phi(x,t) = \sum_{j=n-k+1}^{n} x_{j} e^{j} + \sum_{j,j'=1}^{n-k} f^{j,j'} x_j x_{j'} + \sum_{j=1}^{n-k} \sum_{\ell=1}^{n_1-k} g^{j,\ell} x_j t_{\ell}  + o(\delta^2) \label{newdef} \end{equation}
as $\delta \rightarrow 0^+$ for some vectors $e^j$, $f^{j,j'}$ and $g^{j,\ell}$. Without loss of generality, the $e^j$ can be assumed to belong to the standard basis of $\R^k$. Moreover,
\[ g^{j,\ell} = \frac{\partial}{\partial x_j} \frac{\partial}{\partial t_\ell} \phi |_{(0,0)} = X^j \frac{\partial}{\partial t_{\ell}} \phi |_{(0,0)}  = \left[ X^j ,\frac{\partial}{\partial t_\ell} \right] \phi |_{(0,0)} \]
because $X^j \phi$ is identically zero and therefore its derivative with respect to $t$ is as well.  Now $\partial_{\partial t_\ell} \phi(x,t) = 0$ at the origin, and likewise $X^j \phi(x,t) = 0$ at the origin because the $X^j$ belong to the kernel of $d \pi_1$ and consequently of $d \phi$ as well. Thus if $W^{j}_{\ell}$ is any differential operator which equals $[X^j,\partial/\partial t_{\ell}]$ modulo $\ker d \pi_1 + \ker d \pi_2$ at the origin, then
\[ g^{j,\ell} = W^j_\ell \phi  |_{(0,0)}. \]
 If the bilinear curvature map $Q_{z_0}$ is degenerate, this means that given any $\epsilon > 0$, there are volume-preserving linear coordinate transformations of $x_1,\ldots,x_{n-k}$ (acting as a matrix on the index $j$), $t_1,\ldots,t_{n-k}$ (acting as a matrix on the index $\ell$), and the space $\R^k$ into which $\phi$ maps so that, in the transformed coordinates, the coefficients $g^{j,\ell}$ of the terms $x_j t_\ell$ in \eqref{newdef} have magnitude at most $\epsilon$. Making a compensatory linear transformation of $x_{n-k+1},\ldots,x_{n}$ which is also volume-preserving (which inverts the transformation of $\R^k$ just applied), it may be further assumed that $e^{n-k+1},\ldots,e^{n}$ are the standard basis of $\R^k$ regardless of the chosen value of $\epsilon$.

Now consider the set $\tilde E_\delta$ of points $(x,t)$ in $\mathcal M$ given by $|x_j| \leq \delta$, $j=1,\ldots,n-k$, $|t_\ell| \leq \delta$, $\ell = 1,\ldots,n_1-k$, and 
\[ \left|\left| \sum_{j=n-k+1}^{n} x_{j} e^j + \sum_{j,j'=1}^{n-k} f^{j,j'}_{\epsilon} x_j x_{j'} \right| \right| \leq \epsilon \delta^2, \]
where the norm $||\cdot||$ is the $\ell^\infty$ norm on $\R^k$. The subscript $\epsilon$ on $f_\epsilon^{j,j'}$ is meant to indicate that in this $\epsilon$-dependent coordinate system, the $f^{j,j'}_\epsilon$ could in principle be quite large. However, by Fubini's theorem, 
the set $\tilde E_{\delta}$ has volume exactly $2^{n + n_1 - k} \epsilon^k \delta^{n + n_1}$ for all $\delta$ (sufficiently small that it belongs to the neighborhood on which the coordinates are defined). Its projection $\pi_2(\tilde E_\delta)$ likewise has volume $2^{n} \epsilon^k \delta^{n+k}$ by Fubini's Theorem. The projection $\pi_1(\tilde E_\delta)$ will be contained in a set of volume $C' \delta^{n_1-k} (\epsilon \delta^2)^{k} = C' \epsilon^{k} \delta^{n_1+k}$ for some constant $C'$ independent of $\epsilon$ and $\delta$, provided that $\delta$ is sufficiently small (depending now on $\epsilon$ as well). This is because $\phi(x,t)$ differs from 
\[ \sum_{j=n-k+1}^{n} x_{j} e^j + \sum_{j,j'=1}^{n-k} f^{j,j'}_{\epsilon} x_j x_{j'} \] 
by a vector of size $C \epsilon \delta^2 + o(\delta^2)$ thanks to \eqref{newdef}, which means that $||\phi(x,t)|| \leq (C+1) \epsilon \delta^2 + o(\delta^2)$ (so one can see that $\pi_1(\tilde E_\delta)$ belongs to the product of an $(n_1-k)$-dimensional ball of radius $\delta$ and a $k$-dimensional ball of radius $(C+2) \epsilon \delta^2$ for all sufficiently small $\delta$).
 As above, this means that when $a(0,0) > 0$ and $\delta$ is sufficiently small,
\[ \epsilon^k \delta^{n+n_1} \leq C''' \left( \epsilon^k \delta^{n+k} \right)^{\frac{1}{p_2}} \left(\epsilon^{k} \delta^{n_1+k} \right)^\frac{1}{p_1}, \]
 where $C'''$ is independent of both $\epsilon$ and $\delta$.  Because $\ker d \pi_1 \cap \ker d \pi_2 = \{0\}$, we know already that the powers of $\delta$ on both sides of the inequality are equal. Therefore it must be the case that
\[ \epsilon^k \leq C''' \epsilon^{k \left( \frac{1}{p_2} + \frac{1}{p_1} \right)}, \]
but as $p_2^{-1} + p_1^{-1} > 1$, this inequality clearly cannot hold for sufficiently small $\epsilon$. This means that the inequality \eqref{thebnd} cannot hold for any finite constant $C$.

\section{Concluding Remarks}
\label{remarksec}
\begin{itemize}
\item One interesting application of Theorems \ref{bigtheorem1} and \ref{usefulthm} not included here concerns the Oberlin affine curvature condition. The paper \cite{gressman2019} characterizes the submanifolds of codimension $k$ in $\R^n$ with best-possible affine Hausdorff dimension for each pair $(k,n)$, but there are many situations of high symmetry (e.g., submanifolds with product structure) in which one would expect submanifolds to have affine Hausdorff dimension which is not trivial yet not equal to the extremal value. Theorem \ref{bigtheorem1} and \ref{usefulthm} can be applied in a number of such cases; if $\gamma(t)$ is a parametrization of such a submanifold, one takes $p=1$ and $u(t) = \gamma(t)$. To construct block decompositions, simply let $V_j(t)$ be all vectors which are orthogonal to $\partial^\alpha \gamma(t)$ for all $|\alpha| \leq j$. In the extremal case, it is easy to prove that the dimension of $V_j(t)$ does not depend on $t$, but this will also be the case in many other situations as well (and moreover, one can always find a dense open set on which $\dim V_j(t)$ is constant). This allows one to define weights as in \eqref{haveproved}; the resulting inequalities imply the Oberlin affine curvature condition for some exponent depending on $p \sigma$ and $n$.
\item Many open questions remain in the case when the weights given by \eqref{haveproved} turn out to be identically zero. For certain combinations of dimensions, there are structural reasons why this might indeed happen more often than one would otherwise expect. Just as an example, if $Q : \R^a \times \R^b \rightarrow \R^c$ is bilinear, it is relatively straightforward to show that it can only be semistable if $a \leq bc, b \leq ac$, and $c \leq ab$. However, these are not the only necessary conditions and there are curious situations (e.g., $a = 2, b = 3, c = 5$) in which every $Q$ fails to be semistable. In terms of Theorem \ref{bigtheorem1}, one question is whether the example of Section \ref{degensec} is representative of a generic phenomenon in cases where lower-order parts are necessarily degenerate.
\end{itemize}

\bibliography{mybib}

\end{document}